\title{A refined Jones polynomial for symmetric unions}
\author{Michael Eisermann}
\address{Institut f\"ur Geometrie und Topologie, Universit\"at Stuttgart, Germany}
\email{Michael.Eisermann@mathematik.uni-stuttgart.de}
\urladdr{www.igt.uni-stuttgart.de/eiserm}
\author{Christoph Lamm}
\address{R{\"u}ckertstra{\ss}e 3, 65187 Wiesbaden, Germany}
\email{Christoph.Lamm@web.de}
\date{first version September 2006; this version compiled \today}
\theoremstyle{plain}
\newtheorem{theorem}{Theorem}[section]
\newtheorem{lemma}[theorem]{Lemma}
\newtheorem{proposition}[theorem]{Proposition}
\newtheorem{corollary}[theorem]{Corollary}
\theoremstyle{remark}
\newtheorem{question}[theorem]{Question}
\newtheorem{remark}[theorem]{Remark}
\newtheorem{example}[theorem]{Example}
\newtheorem*{notation}{Notation}
\newtheorem{definition}[theorem]{Definition}
\newcommand{\sref}[1]{\textsection\ref{#1}}
\newcommand{\fref}[1]{Fig.\,\ref{#1}}
\newcommand{\figbox}[3]{\parbox[c][#1][c]{#2}{\centering#3}}
\newcommand{\Z}{\mathbb{Z}}
\newcommand{\R}{\mathbb{R}}
\newcommand{\C}{\mathbb{C}}
\newcommand{\CP}{\mathbb{CP}}
\newcommand{\D}{\mathbb{D}}
\renewcommand{\S}{\mathbb{S}}
\newcommand{\udiagrams}{\mathscr{D}}
\newcommand{\odiagrams}{\smash{\vec{\mathscr{D}}}}
\newcommand{\bracket}[1]{\left\langle #1 \right\rangle}
\newcommand{\lk}{\operatorname{lk}}
\newcommand{\arf}{\operatorname{arf}}
\newcommand{\onehalf}{{\nicefrac{1}{2}}}
\newcommand{\threehalves}{{\nicefrac{3}{2}}}
\newcommand{\fivehalves}{{\nicefrac{5}{2}}}
\newcommand{\isoto}{\mathrel{\xrightarrow{_\sim}}}
\newcommand{\cpic}[1]{\smash{\raisebox{-0.65ex}{\includegraphics[height=2.5ex]{#1}}}}
\newsavebox{\boxocr}\savebox{\boxocr}{\cpic{cross-o}}
\newsavebox{\boxucr}\savebox{\boxucr}{\cpic{cross-u}}
\newsavebox{\boxhcr}\savebox{\boxhcr}{\cpic{cross-h}}
\newsavebox{\boxvcr}\savebox{\boxvcr}{\cpic{cross-v}}
\newsavebox{\boxaxis}\savebox{\boxaxis}{\cpic{axis}}
\newsavebox{\boxaocr}\savebox{\boxaocr}{\cpic{cross-o-axis}}
\newsavebox{\boxaucr}\savebox{\boxaucr}{\cpic{cross-u-axis}}
\newsavebox{\boxahcr}\savebox{\boxahcr}{\cpic{cross-h-axis}}
\newsavebox{\boxavcr}\savebox{\boxavcr}{\cpic{cross-v-axis}}
\newsavebox{\boxascr}\savebox{\boxascr}{\cpic{cross-s-axis}}
\newcommand{\aocr}{\usebox{\boxaocr} }
\newcommand{\aucr}{\usebox{\boxaucr} }
\newcommand{\ahcr}{\usebox{\boxahcr} }
\newcommand{\avcr}{\usebox{\boxavcr} }
\newcommand{\ascr}{\usebox{\boxascr} }
\newsavebox{\boxpcr}\savebox{\boxpcr}{\cpic{skein+}}
\newsavebox{\boxncr}\savebox{\boxncr}{\cpic{skein-}}
\newsavebox{\boxrcr}\savebox{\boxrcr}{\cpic{skein0}}
\newcommand{\pcr}{\usebox{\boxpcr} }
\newcommand{\ncr}{\usebox{\boxncr} }
\newsavebox{\boxapcr}\savebox{\boxapcr}{\cpic{skein+axis}}
\newsavebox{\boxancr}\savebox{\boxancr}{\cpic{skein-axis}}
\newsavebox{\boxarcr}\savebox{\boxarcr}{\cpic{skein0axis}}
\newcommand{\apcr}{\usebox{\boxapcr} }
\newcommand{\ancr}{\usebox{\boxancr} }
\newcommand{\pic}[1]{\raisebox{-0.9ex}{\includegraphics[height=3ex]{#1}}}
\begin{document} %%%%%%%%%%%%%%%%%%%%%%%%%%%%%%%%%%%%%%%%%%%%%%%%%%%%%%%%%%%%
%%%%%%%%%%%%%%%%%%%%%%%%%%%%%%%%%%%%%%%%%%%%%%%%%%%%%%%%%%%%%%%%%%%%%%%%%%%%%

\begin{abstract}
  Motivated by the study of ribbon knots we explore symmetric unions, 
  a beautiful construction introduced by Kinoshita and Terasaka in 1957.
  For symmetric diagrams we develop a two-variable refinement $W_D(s,t)$ 
  of the Jones polynomial that is invariant under symmetric Reidemeister moves.
  Here the two variables $s$ and $t$ are associated to the two types 
  of crossings, respectively on and off the symmetry axis.
  From sample calculations we deduce that a ribbon knot can have essentially
  distinct symmetric union presentations even if the partial knots are the same.

  If $D$ is a symmetric union diagram representing a ribbon knot $K$,
  then the polynomial $W_D(s,t)$ nicely reflects the geometric properties of $K$.
  In particular it elucidates the connection between 
  the Jones polynomials of $K$ and its partial knots $K_\pm$:
  we obtain $W_D(t,t) = V_K(t)$ and $W_D(-1,t) = V_{K_-}(t) \cdot V_{K_+}(t)$,
  which has the form of a symmetric product $f(t) \cdot f(t^{-1})$ 
  reminiscent of the Alexander polynomial of ribbon knots.
\end{abstract}

\keywords{symmetric union presentation of ribbon knots,
  equivalence of knot diagrams under symmetric Reidemeister moves, 
  Kauffman bracket polynomial, refinement of the Jones polynomial}

\subjclass[2000]{57M25, 57M27}

%%%%%%%%%%%%%%%%%%%%%%%%%%%%%%%%%%%%%%%%%%%%%%%%%%%%%%%%%%%%%%%%%%%%%%%%%%%%%

% \headline{15mm}{\eprintinfo}

\maketitle

\vspace{-5mm}

%%%%%%%%%%%%%%%%%%%%%%%%%%%%%%%%%%%%%%%%%%%%%%%%%%%%%%%%%%%%%%%%%%%%%%%%%%%%%

\section{Introduction and outline of results} \label{sec:Introduction}

A knot diagram $D$ is said to be a \emph{symmetric union} if it is 
obtained from a connected sum of a knot $K_+$ and its mirror image $K_-$ 
by inserting an arbitrary number of crossings on the symmetry axis.
Figure \ref{fig:Knot-9_27} displays two examples with $K_\pm = 5_2$.
(We shall give detailed definitions in \sref{sec:Definitions}.)
Reversing this construction, the knots $K_\pm$ can be recovered 
by cutting along the axis; they are called the \emph{partial knots} of $D$.

\begin{figure}[hbtp]
  \centering
  \includegraphics[height=24ex]{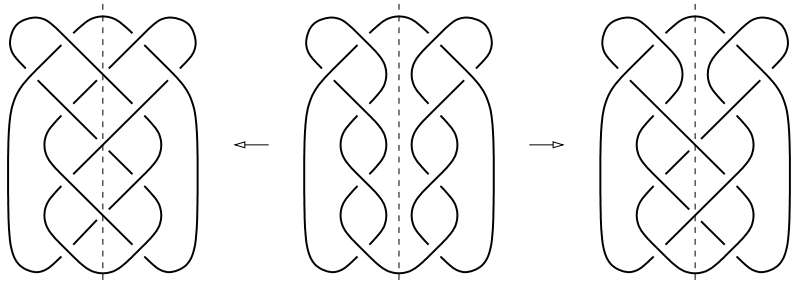}
  \caption{Two symmetric union presentations of the ribbon knot $9_{27}$ (left and right) 
    obtained from the connected sum of the partial knots $K_\pm = 5_2$ (middle)
    by inserting crossings on the symmetry axis}
  \label{fig:Knot-9_27}
\end{figure} 

The two outer diagrams of \fref{fig:Knot-9_27} both represent 
the knot $9_{27}$, which means that they are equivalent 
via the usual Reidemeister moves, see \cite[Fig.\ 8]{EisermannLamm:2007}.  
Are they equivalent through symmetric diagrams?
In the sequel we construct a two-variable refinement $W_D(s,t)$ of 
the Jones polynomial, tailor-made for symmetric union diagrams $D$
and invariant under symmetric Reidemeister moves.
This allows us to show that there cannot exist any symmetric 
transformation between the above diagrams, in other words, 
every transformation must break the symmetry in some intermediate stages.

\subsection{Motivation and background}

Symmetric unions were introduced by Kinoshita 
and Terasaka \cite{KinoshitaTerasaka:1957} in 1957.  
Apart from their striking aesthetic appeal, they 
appear naturally in the study of ribbon knots,
initiated at the same time by Fox and Milnor
\cite{FoxMilnor:1957,Fox:1962,FoxMilnor:1966}.
While ribbon and slice knots have received much attention 
over the last 50 years \cite{Livingston:2005}, 
the literature on symmetric unions remains scarce.
We believe, however, that the subject is worthwhile in its own right,
and also leads to productive questions about ribbon knots.

It is an old wisdom that, \emph{algebraically}, 
a ribbon knot $K$ resembles a connected sum $K_+ \sharp K_-$
of some knot $K_+$ with its mirror image $K_-$.
This is \emph{geometrically} modelled by symmetric unions:
it is easy to see that every symmetric union 
represents a ribbon knot (\sref{sub:SymmetricUnions}).
The converse question is still open; some affirmative 
partial answers are known \cite{EisermannLamm:2007}.
For example, all ribbon knots up to $10$ crossings and 
all two-bridge ribbon knots can be represented as symmetric unions.

Besides the problem of \emph{existence} it is natural to consider 
the question of \emph{uniqueness} of symmetric union representations.
Motivated by the task of tabulating symmetric union diagrams 
for ribbon knots, we were led to ask when two such diagrams 
should be regarded as equivalent.  
A suitable notion of symmetric Reidemeister moves has been 
developed in \cite[\textsection2]{EisermannLamm:2007}.
Empirical evidence suggested that ribbon knots can have
essentially distinct symmetric union representations,
even if the partial knots $K_\pm$ are the same. 
With the tools developed in the present article 
we can solve this problem in the affirmative 
for the knot $9_{27}$ as in \fref{fig:Knot-9_27},
and indeed for an infinite family of two-bridge 
ribbon knots (\sref{sub:TwoBridgeRibbonKnots}).

\subsection{A refined Kauffman bracket}

As our main tool we develop a two-variable refinement of the Jones polynomial 
that nicely reflects the geometric properties of symmetric unions.
Since skein relations are local and do not respect global symmetry conditions,
we are led to consider arbitrary diagrams for the following construction.

\begin{definition}[refined bracket polynomial] \label{def:TwoVariableBracket}
  Consider the plane $\R^2$ with vertical axis $\{0\} \times \R$ and let $\udiagrams$ 
  be the set of planar link diagrams that are transverse to the axis.
  The Kauffman bracket \cite{Kauffman:1987} can be refined to a two-variable 
  invariant $\udiagrams \to \Z(A,B)$, $D \mapsto \bracket{D}$,
  according to the following skein relations:
  \begin{itemize}

  \item
    For every crossing off the axis we have the usual skein relation 
    \renewcommand{\pic}[1]{\bracket{\raisebox{-0.9ex}{\includegraphics[height=3ex]{#1}}}}
    \[
    \tag{A} \label{eq:SkeinA}
    \pic{cross-o} = A^{+1} \pic{cross-h} + A^{-1} \pic{cross-v} .
    \]

  \item
    For every crossing on the axis we have an independent skein relation 
    \renewcommand{\pic}[1]{\bracket{\raisebox{-0.9ex}{\includegraphics[height=3ex]{#1}}}}
    \[
    \tag{B} \label{eq:SkeinB}
    \begin{aligned}
      \pic{cross-o-axis} & = B^{+1} \pic{cross-h-axis} + B^{-1} \pic{cross-v-axis} ,
      \\
      \pic{cross-u-axis} & = B^{-1} \pic{cross-h-axis} + B^{+1}\pic{cross-v-axis} .
    \end{aligned}
    \]
    
  \item
    If $C$ is a collection of $n$ circles 
    (i.e., a diagram without any crossings)
    having $2m$ intersections with the axis, 
    then we have the following circle evaluation formula: 
    \[
    \tag{C} \label{eq:SkeinC}
    \begin{aligned}
      \bracket{C} 
      & = (-A^2-A^{-2})^{n-1} \left(\frac{B^2+B^{-2}}{A^2+A^{-2}}\right)^{m-1} 
      \\
      \notag
      & = (-A^2-A^{-2})^{n-m} (-B^2-B^{-2})^{m-1} .
    \end{aligned}
    \]

  \end{itemize}
\end{definition}

\begin{remark}
  While the skein relations \eqref{eq:SkeinA} and \eqref{eq:SkeinB}
  are a natural ansatz, the circle evaluation formula \eqref{eq:SkeinC} 
  could seem somewhat arbitrary.  We should thus point out that, 
  if we want to achieve invariance, then \eqref{eq:SkeinA} and \eqref{eq:SkeinB} 
  imply \eqref{eq:SkeinC} up to a constant factor.
  We choose our normalization such that the unknot $\cpic{trivial-axis}$
  (where $n=m=1$) is mapped to $\bracket{\cpic{trivial-axis}} = 1$.
\end{remark}

There is a natural family of Reidemeister moves respecting the axis, 
as recalled in \sref{sub:SymmetricMoves}.  The crucial observation 
is that the refined bracket is indeed invariant:

\begin{lemma}[regular invariance]
  The two-variable bracket $\bracket{D} \in \Z(A,B)$ is invariant 
  under regular Reidemeister moves respecting the axis.
  R1-moves off the axis contribute a factor $-A^{\pm3}$, 
  whereas S1-moves on the axis contribute a factor $-B^{\pm3}$.
\end{lemma}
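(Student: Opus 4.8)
The plan is to verify the invariance move by move, running through the list of regular Reidemeister moves respecting the axis recalled in \sref{sub:SymmetricMoves}. For each move I would resolve every crossing inside the relevant disk by the appropriate skein relation --- \eqref{eq:SkeinA} for a crossing off the axis and \eqref{eq:SkeinB} for a crossing on the axis --- so as to write both the left-- and right--hand diagrams as integer--coefficient combinations of crossingless states, and then compare the two combinations by means of the circle evaluation formula \eqref{eq:SkeinC}. The two facts I would extract from \eqref{eq:SkeinC} at the outset are that adjoining a disjoint circle \emph{off} the axis multiplies the bracket by $(-A^2-A^{-2})$ (it raises $n$ by one and leaves $m$ fixed), whereas adjoining a disjoint circle meeting the axis twice multiplies it by $(-B^2-B^{-2})$ (it raises both $n$ and $m$ by one).

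For a move carried out in a disk disjoint from the axis the calculation is word for word Kauffman's classical one \cite{Kauffman:1987}. Resolving by \eqref{eq:SkeinA}, the states produced on either side of an R2-- or R3--move match up in pairs, and every closed loop that appears is off the axis and hence contributes the factor $(-A^2-A^{-2})$; the familiar cancellations then give strict invariance. For an R1--curl off the axis one of the two smoothings creates such a disjoint off--axis loop while the other leaves the strand straight, so the curl multiplies the bracket by
\[
A(-A^2-A^{-2}) + A^{-1} = -A^{3},
\]
and the mirror--image curl by $-A^{-3}$; this is the asserted factor $-A^{\pm3}$.

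The genuinely new cases are the moves meeting the axis, where the crossings are resolved by \eqref{eq:SkeinB}. Here I would treat the S1--curl exactly as above: now one smoothing produces a disjoint loop meeting the axis twice, contributing $(-B^2-B^{-2})$, and the other leaves the strand straight, so
\[
B(-B^2-B^{-2}) + B^{-1} = -B^{3},
\]
with $-B^{-3}$ for the mirror, giving the factor $-B^{\pm3}$. For the on--axis analogues of R2 and R3 the states again pair off after resolution by \eqref{eq:SkeinB}, the only change being that loops straddling the axis are weighted by $(-B^2-B^{-2})$ rather than $(-A^2-A^{-2})$. Two points need care: one must invoke the correct equation of \eqref{eq:SkeinB} according to which of the two on--axis crossing types is present, and one must check that the loop created in each on--axis move genuinely straddles the axis, so that the $B$--factor is indeed the relevant one.

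The hard part, I expect, will be the configurations in which a single closed curve crosses the axis several times, so that a loop can be resolved partly by \eqref{eq:SkeinA} and partly by \eqref{eq:SkeinB}. There the evaluation must come out independent of the order in which the crossings are resolved, and this is exactly what pins down the shape of \eqref{eq:SkeinC}: the balance factor $\bigl(\tfrac{B^2+B^{-2}}{A^2+A^{-2}}\bigr)$ per pair of axis intersections is precisely what makes the $A$--bookkeeping and the $B$--bookkeeping compatible, as already signalled in the Remark that \eqref{eq:SkeinA} and \eqref{eq:SkeinB} force \eqref{eq:SkeinC} up to a constant. Verifying this compatibility for each on--axis move --- rather than the routine curl and Kauffman computations --- is where the real content of the lemma lies.
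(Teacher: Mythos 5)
Your preliminary reductions are sound and agree with the paper: the two multiplicative consequences of \eqref{eq:SkeinC}, the off-axis moves R1, R2, R3 via Kauffman's computation, the curl factors $-A^{\pm3}$ and $-B^{\pm3}$, and the move S2v, whose crossings lie on the axis and are resolved by \eqref{eq:SkeinB}, so that the axis-straddling loop gives the clean cancellation $B^{+2}+B^{-2}-(B^{2}+B^{-2})=0$. The genuine gap is the move S2h, where two strands transverse to the axis slide across one another. You treat every on-axis analogue of R2/R3 as if its crossings lay on the axis and were resolved by \eqref{eq:SkeinB}; but the two crossings created by an S2h move are mirror images lying \emph{off} the axis, so they are resolved by \eqref{eq:SkeinA} with coefficients $A^{+2},A^{-2},1,1$, while the bigon between them straddles the axis, so the loop state is weighted by $(-B^{2}-B^{-2})$. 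Your pairing-off mechanism would then require $A^{+2}+A^{-2}-(B^{2}+B^{-2})=0$, which is false; neither of the two circle rules you extracted from \eqref{eq:SkeinC} settles this move.

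What closes the argument --- and is the paper's key step --- is an identity your state analysis misses: in the S2h resolution, the two states carrying $A^{\pm2}$ are \emph{not} the same diagram as the pair of arcs accompanying the loop state; in each of them one arc crosses the axis twice more than in that reduced configuration. Comparing the two diagrams through \eqref{eq:SkeinC} state by state (in every completion of the rest of the diagram the extra two axis intersections raise $m$ by one and leave $n$ unchanged) gives
\[
\bracket{\text{mixed state}} \;=\; \frac{B^{2}+B^{-2}}{A^{2}+A^{-2}}\,\bracket{\text{arcs without axis crossings}} ,
\]
and only after this conversion does $(A^{+2}+A^{-2})\cdot\frac{B^{2}+B^{-2}}{A^{2}+A^{-2}}=B^{2}+B^{-2}$ cancel against the loop term. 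Your final paragraph gestures at exactly this balance factor, but misdiagnoses the issue as independence of the order in which crossings are resolved --- the state sum defined by \eqref{eq:SkeinA}, \eqref{eq:SkeinB}, \eqref{eq:SkeinC} is order-independent for free --- and then explicitly defers the verification. Since the paper disposes of the remaining moves S2$\pm$, S3 and S4 by reducing them to S2h (together with R1, R2 and S3 invariance), the computation you postponed is not a routine check but the entire content of the lemma beyond Kauffman's original argument.
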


\begin{remark}
  Of course, in every construction of link invariants 
  one can artificially introduce new variables.
  Usually the invariance under Reidemeister moves
  enforces certain relations and eliminates superfluous variables.
  It is thus quite remarkable that the variables 
  $A$ and $B$ remain free, and moreover, carry 
  geometric information as we shall see.
\end{remark}

\subsection{A refined Jones polynomial} 

In order to obtain full invariance we normalize the two-variable
bracket polynomial $\bracket{D}$ with respect to the writhe.  
To this end we consider the set $\odiagrams$ of oriented diagrams and
define the \emph{$A$-writhe} $\alpha(D)$ and the \emph{$B$-writhe} $\beta(D)$
to be the sum of crossing signs off and on the axis, respectively.
This ensures full invariance:

\begin{theorem}[refined Jones polynomial]
  The map $W \colon \odiagrams \to \Z(A,B)$ defined by 
  \[
  W(D) := \bracket{D} \cdot (-A^{-3})^{\alpha(D)} \cdot (-B^{-3})^{\beta(D)} 
  \]
  is invariant under all Reidemeister moves respecting the axis
  (displayed in \sref{sub:SymmetricMoves}).
\end{theorem}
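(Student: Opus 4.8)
The plan is to leverage the preceding lemma on regular invariance and deal only with the remaining move that fails to preserve the bracket, namely the R1/S1 moves. The theorem asserts full invariance of $W$, whereas the lemma already gives invariance of $\bracket{D}$ under all \emph{regular} Reidemeister moves respecting the axis. Since $W$ is obtained from $\bracket{D}$ by multiplying by writhe-correction factors, the strategy is to verify that each Reidemeister move respecting the axis either (i) is regular, in which case $W$ inherits invariance from the lemma once we check the writhe is unchanged, or (ii) is a curl-creating move (R1 off the axis or S1 on the axis), in which case the change in $\bracket{D}$ is exactly cancelled by the change in the correction factor.

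For step (i), I would note that any regular move preserves both writhes $\alpha(D)$ and $\beta(D)$, since such moves (R2, R3, and their symmetric counterparts) either involve no crossings or preserve crossing signs. Indeed R2 introduces or removes two crossings of opposite sign, and R3 permutes crossings without altering their signs; their axis-respecting analogues behave identically with respect to $\alpha$ and $\beta$ according to whether the crossings lie off or on the axis. Hence on these moves the correction factor $(-A^{-3})^{\alpha(D)} (-B^{-3})^{\beta(D)}$ is unchanged, and invariance of $W$ follows immediately from invariance of $\bracket{D}$.

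For step (ii), consider an R1-move off the axis that adds a positive curl. By the lemma this multiplies $\bracket{D}$ by $-A^{+3}$, while $\alpha(D)$ increases by $1$ (the curl being a crossing off the axis of sign $+1$) and $\beta(D)$ is unchanged; thus the correction factor is multiplied by $-A^{-3}$. The net effect on $W(D)$ is a factor $(-A^{3})(-A^{-3}) = 1$, so $W$ is invariant. The negative curl and the on-axis S1-moves (contributing $-B^{\pm 3}$, compensated by the $\beta$-writhe through $-B^{-3}$) are handled by the identical bookkeeping with $A$ replaced by $B$. The only genuinely routine but necessary point is to confirm the sign conventions: that the crossing created by a positive curl off the axis carries $A$-writhe $+1$ and that on the axis the analogous S1-curl carries $B$-writhe with the matching sign, so that the exponents of $A$ (resp.\ $B$) cancel rather than add.

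The main obstacle I anticipate is not analytic but combinatorial-bookkeeping: one must enumerate the full list of Reidemeister moves respecting the axis (as displayed in the referenced section) and confirm that this list splits cleanly into the two cases above, with no exotic axis-crossing move that changes a writhe in an uncompensated way. In particular, care is needed for moves where a strand crosses the axis, or where the symmetry exchanges on-axis and off-axis crossings; I would check each such move preserves the partition of crossings into on-axis and off-axis types (or exchanges them in writhe-preserving pairs), so that $\alpha$ and $\beta$ transform exactly as the compensating factors require. Once this case analysis is complete, the theorem follows since $W$ is manifestly invariant under every move in the list.
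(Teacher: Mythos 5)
Your proof is correct and takes essentially the same approach as the paper's: invoke the regular-invariance lemma for $\bracket{D}$, observe that regular moves (R2, R3, S2, S3, S4) preserve both writhes $\alpha$ and $\beta$ so that $W$ inherits invariance, and check that the factors $-A^{\pm3}$ and $-B^{\pm3}$ arising from R1- and S1-moves are exactly cancelled by the normalization $(-A^{-3})^{\alpha(D)}(-B^{-3})^{\beta(D)}$. The paper's proof is simply a terser version of this same bookkeeping, asserting without elaboration the writhe-preservation that you rightly flag as the point requiring case-by-case verification.
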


\begin{notation}
  We shall adopt the common notation $A^2 = t^{-\onehalf}$ and $B^2 = s^{-\onehalf}$.
  Instead of $W(D)$ we also write $W_D$ or $W_D(s,t)$ if we wish 
  to emphasize or specialize the variables.
\end{notation}

The following properties generalize those of the Jones polynomial:

\begin{proposition} \label{prop:GeneralProperties}
  The invariant $W \colon \odiagrams \to \Z(s^\onehalf,t^\onehalf)$ 
  enjoys the following properties:
  \begin{enumerate}
  \item
    $W_D$ is insensitive to reversing the orientation of all components of $D$.
  \item
    $W_D$ is invariant under mutation, flypes, and rotation about the axis.
  \item
    If $D \sharp D'$ is a connected sum along the axis,
    then $W_{D \sharp D'} = W_D \cdot W_{D'}$.
  \item
    If $D^*$ is the mirror image of $D$, then $W_{D^*}(s,t) = W_D(s^{-1},t^{-1})$.
  \item
    If $D$ is a symmetric diagram, then $W_D(s,t)$ is symmetric in $t \leftrightarrow t^{-1}$.
  \item 
    If $D$ is a symmetric union link diagram, then $W_D$ is insensitive to
    reversing the orientation of any of the components of $D$.
  \end{enumerate}
\end{proposition}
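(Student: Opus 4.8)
The plan is to verify the six properties in roughly the order listed, treating the formal properties (1), (3) and (4) as direct consequences of the defining skein relations together with the writhe correction, and then combining these with the reflection symmetry of the diagram to obtain (5) and (6). Two elementary facts underlie everything: the bracket $\bracket{D}$ is defined without reference to orientations, and reversing both strands at a crossing preserves its sign. Property (1) is then immediate, since reversing all components leaves $\alpha(D)$ and $\beta(D)$ unchanged and does not touch $\bracket{D}$. For (3) I would resolve all crossings on each side of the connecting sphere and count loops and axis-intersections: a connected sum along the axis merges one loop and one pair of axis-intersections, so $n=n_1+n_2-1$ and $m=m_1+m_2-1$; substituting into \eqref{eq:SkeinC} shows the circle evaluation is multiplicative, whence $\bracket{D\sharp D'}=\bracket{D}\cdot\bracket{D'}$, and since the writhes add we get $W_{D\sharp D'}=W_D\cdot W_{D'}$. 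For (4) I would observe that passing to the mirror image exchanges the two crossing types in both \eqref{eq:SkeinA} and \eqref{eq:SkeinB}, amounting to the substitution $A\mapsto A^{-1}$, $B\mapsto B^{-1}$, under which the circle formula \eqref{eq:SkeinC} is visibly invariant; since mirroring also negates every crossing sign, the writhe factors conspire so that $W_{D^*}(s,t)=W_D(s^{-1},t^{-1})$.

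For (2) I would use the standard skein-module argument: a tangle occupies a disc meeting the rest of the diagram in four points, its bracket lies in the rank-two module spanned by the two crossingless smoothings, and the tangle rotation underlying mutation and flypes acts trivially on this module, leaving $\bracket{D}$ unchanged; as these moves preserve each crossing sign they preserve $\alpha$ and $\beta$, hence $W_D$. Rotation about the axis is even simpler, being an ambient rotation carrying the axis to itself and every crossing to a crossing of the same type and sign. Here I would take care that the tangle discs can be chosen transverse to the axis, so that the moves stay within $\udiagrams$.

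The heart of (5) is the behaviour under reflection $\rho$ across the axis. Reflecting the plane exchanges the two crossing types both off and on the axis and negates every crossing sign, so exactly as in (4) this yields $W_{\rho(D)}=W_D(s^{-1},t^{-1})$. The key geometric input is that, for a symmetric diagram, $\rho(D)$ coincides with the diagram obtained from $D$ by switching only the crossings on the axis: the off-axis part is mirror-symmetric by hypothesis, while reflection flips the axis crossings. Switching the axis crossings is the substitution $B\mapsto B^{-1}$ together with $\beta\mapsto-\beta$, which gives $W_{\rho(D)}=W_D(s^{-1},t)$. Comparing the two evaluations yields $W_D(s^{-1},t^{-1})=W_D(s^{-1},t)$, and replacing $s$ by $s^{-1}$ gives the asserted symmetry $W_D(s,t)=W_D(s,t^{-1})$.

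Finally, for (6) I would reduce to reversing a single component $L_i$ and iterate. Reversing $L_i$ leaves $\bracket{D}$ untouched and flips the sign of exactly the crossings shared by $L_i$ with the other components, so $W$ is multiplied by $A^{6a_i}B^{6b_i}$, where $a_i$ and $b_i$ count these crossings with sign off and on the axis respectively; since $A$ and $B$ are free, the claim is equivalent to $a_i=b_i=0$. Off the axis the mirror symmetry pairs each inter-component crossing with one of opposite sign, forcing $a_i=0$. The honest difficulty, and the step I expect to be the main obstacle, is the on-axis bookkeeping: there reflection both relocates and switches the crossings, so establishing $b_i=0$ requires a careful parity argument tracking how the components and their orientations meet the axis. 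I would isolate this in a small lemma on how the components of a symmetric union link cross the axis, and then conclude that the normalizing monomial is trivial, so that $W_D$ is insensitive to reversing any component.
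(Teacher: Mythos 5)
Your arguments for (1), (3) and (4) are correct: the count $n=n_1+n_2-1$, $m=m_1+m_2-1$ makes the circle evaluation \eqref{eq:SkeinC} multiplicative in (3), and the substitution $A\mapsto A^{-1}$, $B\mapsto B^{-1}$ together with the negated writhes gives (4). The paper disposes of (2)--(5) with a single induction on the number of crossings, so your explicit state-sum computations and, for (2), the standard rank-two skein-module argument for the mutating tangle are a genuinely different and more detailed route; for (2) you should just record that closed circles inside the tangle disc contribute evaluation factors determined by their number of axis intersections, and that these counts are preserved by the three $\pi$-rotations.

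There are, however, two genuine gaps. In (5), the identification of $W_{\rho(D)}$ with $W_{D'}$, where $D'$ is $D$ with only its on-axis crossings switched, is valid only for \emph{unoriented} diagrams: $\rho(D)$ carries the push-forward orientation, which reverses every component of type (a) and preserves those of type (b), whereas $D'$ keeps the original orientation, and $W$ sees the orientation through the writhes. Concretely, $\alpha(\rho(D))=-\alpha(D)$ with the push-forward orientation while $\alpha(D')=+\alpha(D)$, so (using the bracket identity that your own argument establishes) the two polynomials differ by the monomial $A^{6\alpha(D)}$, and your chain of equalities proves the $t\leftrightarrow t^{-1}$ symmetry only when $\alpha(D)=0$. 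For symmetric \emph{unions} this is easily repaired: all components are of type (a), the push-forward orientation is the total reversal, and property (1) identifies the two $W$-polynomials. But for general symmetric diagrams $\alpha(D)\neq0$ does occur --- see Remark \ref{rem:SymmetricLinking} and \fref{fig:writhe}, where $\alpha(D)=4$ --- so this step cannot be waved through; as written, your proof covers (5) only in the union case, and the discrepancy factor $A^{6\alpha(D)}$ shows the obstruction is real rather than an artefact of the method.

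In (6), the step you yourself single out as the main obstacle, namely $b_i=0$, is never actually carried out, so the proof is incomplete as it stands. Moreover, the missing statement is simpler and stronger than the ``careful parity argument'' you anticipate: in a symmetric union, \emph{every} crossing on the axis is a self-crossing of a single component. Indeed, such a crossing lies on the fixed-point axis, the reflection $\rho$ interchanges the two strands passing through it, and by hypothesis $\rho$ maps each component to itself; hence both strands belong to the same component. Thus on-axis crossings between distinct components do not exist at all, and $b_i=0$ holds trivially; this is exactly Proposition \ref{prop:SymmetricCrossings} of the paper, whose part (3) is also your opposite-sign pairing giving $a_i=0$. Once this lemma is inserted, your reduction (reversing $L_i$ multiplies $W_D$ by $A^{6a_i}B^{6b_i}$, and $A$, $B$ are independent variables) does complete the proof of (6).
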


\subsection{Symmetric unions}

In the special case of symmetric union diagrams,
the practical calculation of $W$-polynomials is 
most easily carried out via the following algorithm:

\begin{proposition}[recursive calculation via skein relations] 
  Consider a symmetric union diagram $D$ with $n$ components.  
  If $D$ has no crossings on the axis then
  \begin{equation} % \label{eq:SUskein0}
    W_D(s,t) = \left(\frac{s^\onehalf+s^{-\onehalf}}{t^\onehalf+t^{-\onehalf}}\right)^{n-1} V_L(t) ,
  \end{equation}
  where $V_L(t)$ is the Jones-polynomial of the link $L$ represented by $D$.  

  If $D$ has crossings on the axis, then we can apply the following recursion formulae:
  \renewcommand{\pic}[1]{W{\left(\raisebox{-0.9ex}{\includegraphics[height=3ex]{#1}}\right)}}
  \begin{align}
    % \label{eq:SUskein1}
    \pic{cross-o-axis} &= -s^{+\onehalf} \pic{cross-h-axis} - s^{+1} \pic{cross-v-axis} ,
    \\
    % \label{eq:SUskein2}
    \pic{cross-u-axis} &= -s^{-\onehalf} \pic{cross-h-axis} - s^{-1} \pic{cross-v-axis} .
  \end{align}
\end{proposition}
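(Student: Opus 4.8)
The plan is to prove the two assertions separately, in each case reducing the refined bracket $\bracket{D}$ to the classical Kauffman bracket (the specialization $B=A$) and then applying the writhe normalization that defines $W$. For the base case I would first note that $\beta(D)=0$, so that $W_D=\bracket{D}\cdot(-A^{-3})^{\alpha(D)}$. Since $D$ has no crossing on the axis, resolving every off-axis crossing by \eqref{eq:SkeinA} never changes the number $2m$ of intersection points with the axis: all Kauffman states $S$ meet the axis in the same $2m$ points, with only the number of loops varying. Comparing the circle evaluation \eqref{eq:SkeinC} with the classical value $(-A^2-A^{-2})^{n(S)-1}$ shows that every state acquires the same, state-independent factor $\left(\frac{-B^2-B^{-2}}{-A^2-A^{-2}}\right)^{m-1}$; hence $\bracket{D}$ equals this factor times the classical bracket of $D$. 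Feeding this into the standard identity $V_L(t)=(-A^{-3})^{w(D)}\bracket{D}|_{B=A}$ at $A^2=t^{-\onehalf}$, and using $w(D)=\alpha(D)+\beta(D)=\alpha(D)$, I get $W_D=\left(\frac{-B^2-B^{-2}}{-A^2-A^{-2}}\right)^{m-1}V_L(t)$, which becomes the claimed expression after substituting $A^2=t^{-\onehalf}$ and $B^2=s^{-\onehalf}$. The remaining point is the identification $m=n$: for a symmetric union diagram without crossings on the axis each component meets the axis in exactly two points, so there are $2n$ intersection points in total. I would isolate this purely geometric fact as a separate lemma, since it is the one step that really uses the precise definition of a symmetric union diagram rather than the skein calculus.

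For the recursion I would start from \eqref{eq:SkeinB} applied at a single crossing on the axis and multiply through by the normalization $(-A^{-3})^{\alpha}(-B^{-3})^{\beta}$ of the crossed diagram. Smoothing a crossing on the axis moves no off-axis crossing, so the $A$-writhe $\alpha$ is common to the crossed diagram and both of its smoothings, while the resolved crossing contributes $+1$ to the $B$-writhe for cross-o-axis and $-1$ for cross-u-axis. Thus the normalization of the crossed diagram equals $(-B^{-3})^{\pm1}$ times the normalization shared by the two smoothings, and combining $(-B^{-3})^{+1}$ with the bracket coefficients $B^{+1}$ and $B^{-1}$ of \eqref{eq:SkeinB} produces $-B^{-2}$ and $-B^{-4}$. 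With $B^2=s^{-\onehalf}$ these are precisely $-s^{+\onehalf}$ and $-s^{+1}$, and the cross-u-axis case is identical after exchanging $B\leftrightarrow B^{-1}$.

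The delicate step, and the one I expect to be the main obstacle, is the assertion that the two smoothings share the same normalization, i.e. have equal $A$- and $B$-writhes. For the orientation-preserving (Seifert) smoothing this is immediate, as the orientation and all crossing signs are inherited; but the other smoothing inherits no orientation and forces a reversal on part of the diagram, which a priori alters individual crossing signs. Here I would invoke Proposition \ref{prop:GeneralProperties}(6): both smoothings of an on-axis crossing of a symmetric union are again symmetric union diagrams, and for such diagrams $W$ is insensitive to reversing the orientation of any component. Since the bracket itself is orientation-free, the writhe normalization must therefore be orientation-independent as well, so I may compute each smoothing with whichever orientation I please and conclude that both normalizations equal that of the crossed diagram with the resolved crossing deleted. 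I expect this orientation bookkeeping, rather than the surrounding algebra, to carry the real weight of the argument.
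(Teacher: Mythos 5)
Your base case is correct and follows the same route as the paper (Proposition \ref{prop:NoCrossingsOnAxis} combined with the observation $m=n$); your state-sum comparison with the classical Kauffman bracket is a legitimate variant of the paper's induction on off-axis crossings, and isolating $m=n$ as a separate geometric lemma is exactly the right instinct. The algebraic skeleton of your recursion step is also the paper's: multiply the bracket relation \eqref{eq:SkeinB} by the writhe normalization and absorb one factor $(-B^{-3})^{\pm1}$.

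The recursion, however, has a genuine gap, and it sits exactly where you predicted the ``real weight'' would be. Two facts are needed: (i) the resolved on-axis crossing has sign $+1$ if it is $\aocr$ and $-1$ if it is $\aucr$, and (ii) both smoothings $\ahcr$ and $\avcr$ have the same $A$- and $B$-writhes as the crossed diagram minus the resolved crossing, for whatever orientation one puts on them. You assert (i) without proof, and you try to obtain (ii) from Proposition \ref{prop:GeneralProperties}(6). But that proposition concerns reversing \emph{whole components} of a fixed diagram, and this is not the discrepancy that arises here: every on-axis crossing of a symmetric union is a self-crossing, and the vertical smoothing $\avcr$ of a self-crossing forces a reversal of \emph{one of the two loops} into which the crossing divides its component --- a partial reversal of a single component, which no sequence of whole-component reversals reproduces. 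Orientation-independence of $W$ (hence of $\alpha$ and $\beta$, since $A$ and $B$ are independent variables) tells you the writhes of $\avcr$ are well-defined numbers, but it does not compute them: with any concrete orientation of $\avcr$, the remaining on-axis and off-axis crossings met exactly once by the reversed loop a priori flip sign relative to the crossed diagram, so ``shared normalization'' does not follow. What closes both (i) and (ii) simultaneously is the paper's Proposition \ref{prop:SymmetricCrossings}: at every crossing on the axis of a symmetric union both strands belong to the same component and point into the same halfspace, hence every on-axis over-crossing is positive and every under-crossing negative \emph{for every orientation} (so $\beta$ is computable crossing-by-crossing and is robust under partial reversals), while off-axis signs cancel in mirror pairs so that $\alpha=0$ for every symmetric union diagram. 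This short geometric lemma is the missing ingredient; note that in the paper Proposition \ref{prop:GeneralProperties}(6) is itself deduced from it, so your appeal to that proposition, while not circular, leans on precisely the fact you left unproved.
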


These rules allow for a recursive calculation of $W(D)$ for every symmetric union $D$. 
Notice that $W(D)$ is independent of orientations according 
to Proposition \ref{prop:GeneralProperties}(6).

We emphasize that $W(D)$ of an arbitrary diagram $D$ 
will in general \emph{not} be a polynomial: 
by construction $W(D) \in \Z(s^\onehalf,t^\onehalf)$ is usually a fraction 
and cannot be expected to lie in the subring $\Z[s^{\pm\onehalf},t^{\pm\onehalf}]$.
This miracle happens, however, for symmetric union diagrams:

\begin{proposition}[integrality] \label{prop:Integrality}
  If $D$ is a symmetric union knot diagram, then 
  $W_D$ is a Laurent polynomial in $s$ and $t$.
  More generally, if $D$ is a symmetric union diagram with $n$ components, then
  $W_D \in \Z[s^{\pm1},t^{\pm1}] \cdot (s^{\onehalf}+s^{-\onehalf})^{n-1}$.
\end{proposition}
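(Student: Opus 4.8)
The plan is to induct on the number $c$ of crossings on the axis, feeding on the recursive calculation established in the preceding Proposition. Writing $n(E)$ for the number of components of a symmetric union diagram $E$, I will prove the equivalent statement that $U_E := W_E \cdot (s^{\onehalf}+s^{-\onehalf})^{1-n(E)}$ lies in $\Z[s^{\pm1},t^{\pm1}]$. The point to keep in mind throughout is that the recursion formulas introduce coefficients in the variable $s$ only, so Laurent-integrality in $t$ is inherited automatically at every inductive step and has to be dealt with solely in the base case; in particular the first assertion (that $W_D$ is a genuine Laurent polynomial for a knot diagram) is just the case $n=1$, where the prefactor $(s^{\onehalf}+s^{-\onehalf})^{0}=1$ disappears.

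For the base case $c=0$ I invoke the closed formula $W_E = \bigl(\frac{s^{\onehalf}+s^{-\onehalf}}{t^{\onehalf}+t^{-\onehalf}}\bigr)^{n-1} V_L(t)$, which reduces the claim to the divisibility $(t^{\onehalf}+t^{-\onehalf})^{n-1} \mid V_L(t)$ in $\Z[t^{\pm\onehalf}]$, with quotient in $\Z[t^{\pm1}]$. Here $L$ is the link of a symmetric diagram that has no crossing on the axis and, by the defining property of symmetric unions, no crossing straddling the axis either. The idea is that such a diagram can be cut open along the axis: one checks that each of its $n$ components meets the axis in exactly two points and that distinct components are joined only through the planar connections along the axis, so that $L$ is a split union of $n$ knots $J_1,\dots,J_n$, each $J_i$ being the connected sum of a right-half arc with its mirror. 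Multiplicativity of the Jones polynomial then yields $V_L = (-t^{\onehalf}-t^{-\onehalf})^{n-1}\prod_i V_{J_i}$, which is divisible by $(t^{\onehalf}+t^{-\onehalf})^{n-1}$ with quotient $(-1)^{n-1}\prod_i V_{J_i} \in \Z[t^{\pm1}]$, as needed.

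For the inductive step I resolve one crossing on the axis, say of the diagram $\sigma$, by the recursion
\[
W(\aocr) = -s^{+\onehalf}\,W(\ahcr) - s^{+1}\,W(\avcr),
\]
together with its negative counterpart; the resulting diagrams $h$ and $v$ are again symmetric union diagrams with one fewer crossing on the axis, to which the induction hypothesis applies. Dividing by $(s^{\onehalf}+s^{-\onehalf})^{n(\sigma)-1}$ and substituting $W(h)=U_h\,(s^{\onehalf}+s^{-\onehalf})^{n(h)-1}$ and $W(v)=U_v\,(s^{\onehalf}+s^{-\onehalf})^{n(v)-1}$, the two contributions become $-s^{\onehalf}U_h\,(s^{\onehalf}+s^{-\onehalf})^{n(h)-n(\sigma)}$ and $-s\,U_v\,(s^{\onehalf}+s^{-\onehalf})^{n(v)-n(\sigma)}$. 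These lie in $\Z[s^{\pm1},t^{\pm1}]$ precisely when $n(h)-n(\sigma)$ is odd and $n(v)-n(\sigma)$ is even. The decisive input is therefore a parity statement: the vertical smoothing, which carries the integral coefficient $-s^{\pm1}$, must preserve the number of components ($n(v)=n(\sigma)$), while the horizontal smoothing, carrying the half-integral coefficient $-s^{\pm\onehalf}$, must change it by one ($n(h)=n(\sigma)\pm1$).

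The hard part will be pinning down this parity bookkeeping. That the two smoothings of a single crossing always have component numbers of opposite parity is the standard loop-counting fact underlying the Kauffman bracket; what remains is to verify that it is the \emph{vertical} smoothing whose parity agrees with that of the crossing diagram itself, i.e.\ that following the strands through an axis crossing reproduces the vertical resolution. This is a purely local check at the crossing, where in a symmetric union the strands run essentially parallel to the axis, so the vertical smoothing is the through-connection. Once it is in place, each half-power of $s$ is exactly absorbed by the corresponding shift in the exponent of $(s^{\onehalf}+s^{-\onehalf})$, and the induction closes. The remaining delicate point, to be settled as part of the base-case analysis, is the claim that the components of an axis-free symmetric diagram are genuinely unlinked; this must be extracted from the absence of crossings both on and across the axis, and is what guarantees the requisite $(-t^{\onehalf}-t^{-\onehalf})^{n-1}$ factor in $V_L$.
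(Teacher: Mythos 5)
Your inductive step is sound and is essentially the paper's own argument: by Proposition \ref{prop:SymmetricCrossings} every crossing on the axis is a self-crossing of one component whose vertical smoothing preserves the number of components while the horizontal smoothing raises it by exactly one, and then the half-powers of $s$ in the recursion of Proposition \ref{prop:SymmetricSkein} are absorbed by the shift in the exponent of $(s^{\onehalf}+s^{-\onehalf})$, just as you compute. (One small correction: you need $n(h)=n(\sigma)+1$ exactly, not $n(\sigma)\pm1$; the case $n(h)=n(\sigma)-1$ would reintroduce a denominator. It does hold, since of the two smoothings of a self-crossing one preserves and one increases the component count, and the preserving one is the vertical one here.)

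The genuine gap is in your base case. For a symmetric union diagram with no crossings on the axis you assert that the represented link $L$ is a split union of $n$ knots, because ``distinct components are joined only through the planar connections along the axis.'' This is false, and it is precisely the crux of the whole proposition. Distinct components of a symmetric union may cross each other \emph{off} the axis arbitrarily; the symmetry forces all pairwise linking numbers to vanish (Proposition \ref{prop:SymmetricCrossings}), but vanishing linking numbers do not make a link split, and nothing in the definition allows the components to be pulled apart. What is true is that $L$ is a ribbon link bounding a ribbon surface of Euler characteristic $n$ (Propositions \ref{prop:SymmetricRibbonDisks} and \ref{prop:SymmetricRibbonSurface}), and ribbon links are in general not split. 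Consequently the divisibility $(t^{\onehalf}+t^{-\onehalf})^{n-1} \mid V_L(t)$ that you need in the base case cannot be obtained from multiplicativity under disjoint union: it is exactly the content of Theorem \ref{thm:RibbonDivisibility}, the main theorem of the separate paper \cite{Eisermann:2009}, and this is what the paper invokes in its proof of Corollary \ref{cor:StrongIntegrality}. The remark following Proposition \ref{prop:Integrality} says as much: the disappearance of the denominator ``was rather unexpected, and sparked off an independent investigation'' --- which would make no sense if the links in question were simply split. Note also that you cannot sidestep the issue by restricting attention to knot diagrams: your own horizontal smoothings produce diagrams with many components, so the multi-component base case is unavoidable. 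As written, the proof can only be completed by importing (or reproving) the ribbon-link divisibility theorem; the elementary splitness argument does not exist.
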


\begin{remark}
  The integrality of $W_D$ is a truly remarkable property of symmetric unions.
  The fact that the denominator disappears for symmetric unions 
  was rather unexpected, and sparked off an independent investigation,
  whose results are presented in \cite{Eisermann:2009}.  
  The integrality of $W_D(s,t)$ now follows from 
  a more general integrality theorem \cite[Theorem 1]{Eisermann:2009},
  which is interesting in its own right:
  for every $n$-component ribbon link the Jones polynomial $V(L)$
  is divisible by the Jones polynomial $V(\bigcirc^n)$ of the trivial link.
\end{remark}

The following special values in $t$ correspond to those of the Jones polynomial:

\begin{proposition}[special values in $t$]
  \label{prop:SpecialValues:t}
  If $D$ is a symmetric union link diagram with $n$ components,
  then $W_D(s,\xi) = (-s^{\onehalf}-s^{-\onehalf})^{n-1}$ 
  for each $\xi \in \{ 1, \, \pm i, \, e^{\pm 2 i \pi / 3} \}$,
  and $\frac{\partial W_D}{\partial t}(s,1) = 0$.
  In other words, $W_D - (-s^{\onehalf}-s^{-\onehalf})^{n-1}$
  is divisible by $(t-1)^2 (t^2+1) (t^2+t+1)$.
\end{proposition}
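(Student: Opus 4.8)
My plan is to propagate the classical special values of the ordinary Jones polynomial through the refined construction by means of the skein recursion, and to obtain the divisibility statement as a formal consequence. Since $(t-1)^2(t^2+1)(t^2+t+1)$ is monic in $t$ with a double zero at $t=1$ and simple zeros exactly at $t=\pm i$ and $t=e^{\pm2\pi i/3}$, divisibility of $W_D-(-s^{\onehalf}-s^{-\onehalf})^{n-1}$ by it is equivalent to the five pointwise identities together with $\partial_t W_D(s,1)=0$. So it suffices to prove those.

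For the pointwise statements I would fix $t=\xi$ and eliminate the axis crossings using the recursion of the preceding proposition. Its coefficients $-s^{\pm\onehalf}$ and $-s^{\pm1}$ involve only $s$, so the specialisation $t=\xi$ commutes with the recursion and expands $W_D(s,\xi)$ as an $s$-linear combination of the crossingless states $D_\sigma$. Each $D_\sigma$ has no crossing on the axis and, because both admissible smoothings of an axis crossing respect the reflection, is itself a symmetric diagram, representing a link $L_\sigma$ of the form (tangle)$\,\cup\,$(its mirror). To such a diagram the crossingless formula applies: $W_{D_\sigma}(s,t)=\bigl(\frac{s^{\onehalf}+s^{-\onehalf}}{t^{\onehalf}+t^{-\onehalf}}\bigr)^{m_\sigma-1}V_{L_\sigma}(t)$, with $2m_\sigma$ the number of intersections of $D_\sigma$ with the axis. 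I then insert the Lickorish--Millett evaluations: $V_L(1)=(-2)^{c-1}$ and $V_L(e^{\pm2\pi i/3})=(-1)^{c-1}$ hold for every $c$-component link, whereas $V_L(\pm i)=(-\sqrt2)^{c-1}(-1)^{\arf L}$ requires $\arf L_\sigma=0$.

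The derivative condition at $t=1$ is cleaner and does not wait for the collapse below. The prefactor $t^{\onehalf}+t^{-\onehalf}$ is stationary at $t=1$, so $\partial_t W_{D_\sigma}(s,1)$ reduces to $V_{L_\sigma}'(1)$; and $V_{L_\sigma}'(1)=0$ because the reflection negates all pairwise linking numbers while permuting the components of $L_\sigma$, forcing them to vanish. Summing over $\sigma$ gives $\partial_t W_D(s,1)=0$ directly. This is where the symmetric-union hypothesis is indispensable: for a general link the derivative condition already fails for the Hopf link, as does the value at $t=i$.

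Two points remain, and the second is the main obstacle. The first is the vanishing $\arf L_\sigma=0$ (properness is already secured by the vanishing linking numbers) needed at $t=\pm i$: this is the analogue, for the crossingless symmetric states, of the fact that symmetric unions are slice (\sref{sub:SymmetricUnions}), and I would derive it from the (tangle)$\,\cup\,$(mirror) structure of $L_\sigma$. The second is a purely $s$-theoretic identity: after the substitutions each state contributes $(s^{\onehalf}+s^{-\onehalf})^{m_\sigma-1}(\xi^{\onehalf}+\xi^{-\onehalf})^{c_\sigma-m_\sigma}(-1)^{c_\sigma-1}$, and one must show that summing these with the recursion weights makes the residual dependence on $\xi$ cancel and collapses everything to the single power $(-s^{\onehalf}-s^{-\onehalf})^{n-1}$. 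I expect the clean way to do this is an induction on the number of axis crossings that tracks how each of the two smoothings changes the pair (component count, axis-intersection count), rather than expanding all crossings at once; the base case is exactly the crossingless normalisation already recorded. Getting this bookkeeping right — equivalently, proving the collapse of the $s$-state-sum — is the crux of the argument.
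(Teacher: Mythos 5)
Your plan is essentially the paper's own proof: an induction on the number of crossings on the axis, whose base case (no axis crossings) is settled by the crossingless formula of Proposition \ref{prop:SymmetricSkein} together with the Lickorish--Millett evaluations and the vanishing of the Arf invariant and of all linking numbers for the ribbon link represented by a symmetric union diagram. The step you flag as the ``main obstacle'', however, is actually the easiest part, and your own second thought --- induct one axis crossing at a time instead of expanding the whole state sum --- is exactly what the paper does. The bookkeeping you ask for is Proposition \ref{prop:SymmetricCrossings}: both smoothings of an axis crossing are again symmetric union diagrams, the vertical one with the same number $n$ of components, the horizontal one with $n+1$. Writing $u=-s^{\onehalf}-s^{-\onehalf}$, the inductive step at $t=\xi$ is then the two-term identity
\[
-s^{\onehalf}\,u^{n} - s\,u^{n-1} \;=\; u^{n-1}\bigl(-s^{\onehalf}u - s\bigr) \;=\; u^{n-1}\bigl((s+1)-s\bigr) \;=\; u^{n-1},
\]
and likewise $-s^{-\onehalf}\,u^{n} - s^{-1}u^{n-1} = u^{n-1}$ for a negative axis crossing; the derivative statement propagates through the same recursion by linearity (or, as you observe, state by state, since each crossingless state already has vanishing $t$-derivative at $t=1$). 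So there is no gap of substance in your proposal, only a computation left unexecuted --- and it is a two-line computation, not the crux. One detail worth keeping from your write-up: the value $V_L(e^{\pm 2i\pi/3})=(-1)^{c-1}$ for a $c$-component link is the correct one (the paper's printed proof states $1$, which holds only for knots), and it is precisely this sign that makes the crossingless case come out as $(-s^{\onehalf}-s^{-\onehalf})^{n-1}$ rather than $(s^{\onehalf}+s^{-\onehalf})^{n-1}$.
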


The following special values in $s$ nicely reflect the symmetry:

\begin{proposition}[special values in $s$]
  Suppose that a knot $K$ can be represented by a symmetric union diagram $D$ 
  with partial knots $K_\pm$.  Then the following properties hold:
  \begin{enumerate}
  \item
    Mapping $s \mapsto t$ yields $W_D(t,t) = V_K(t)$, the Jones polynomial of $K$
  \item
    Mapping $s \mapsto -1$ yields a symmetric product $W_D(-1,t) = V_{K_-}(t) \cdot V_{K_+}(t)$.
  \end{enumerate}
  In particular, both specialization together imply 
  $W_D(-1,-1) = \det(K) = \det(K_-) \cdot \det(K_+)$.
\end{proposition}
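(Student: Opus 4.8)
The plan is to treat the two specializations separately and then combine them for the final identity.

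For part~(1), I would observe that the substitution $s \mapsto t$ means $B^2 = s^{-\onehalf} = t^{-\onehalf} = A^2$, so we may set $B = A$. Under this identification the axis skein relation \eqref{eq:SkeinB} becomes \emph{identical} to the off-axis relation \eqref{eq:SkeinA}, and the circle evaluation \eqref{eq:SkeinC} collapses to $(-A^2-A^{-2})^{n-1}$, the ordinary Kauffman loop value (the factor involving $m$ disappears). Hence $\bracket{D}$ reduces to the classical Kauffman bracket. Moreover the writhe correction $(-A^{-3})^{\alpha(D)}(-B^{-3})^{\beta(D)}$ becomes $(-A^{-3})^{\alpha(D)+\beta(D)} = (-A^{-3})^{w(D)}$, the usual total-writhe normalization, since $\alpha(D)+\beta(D)$ is the full writhe. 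Therefore $W_D(t,t)$ is exactly the writhe-normalized bracket, that is $V_K(t)$ with $A^2 = t^{-\onehalf}$. This part is routine.

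For part~(2) the decisive observation is that the \emph{on-axis loop value vanishes} at $s=-1$: indeed $-B^2-B^{-2} = -(s^{-\onehalf}+s^{\onehalf})$ equals $0$ when $s=-1$. In the state sum defining $\bracket{D}$, a state $\sigma$ with $n(\sigma)$ loops and $2m(\sigma)$ axis-intersections contributes a monomial times $(-A^2-A^{-2})^{n(\sigma)-m(\sigma)}(-B^2-B^{-2})^{m(\sigma)-1}$ by \eqref{eq:SkeinC}, so at $s=-1$ the states with $m(\sigma)\ge 2$ are annihilated while those crossing the axis exactly twice survive. The effect is to \emph{decouple the two sides of the axis}: the surviving geometry is that of the diagram obtained by cutting along the axis, whose two halves are diagrams of $K_-$ and $K_+$. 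The concrete strategy I would follow is induction on the number of axis crossings, stripping them off via the recursion of the preceding proposition. The base case has no axis crossings, so $D$ is the connected sum $K_-\sharp K_+$ along the axis; there the first formula of that proposition gives $W_D = V_{K_-\sharp K_+}(t) = V_{K_-}(t)\,V_{K_+}(t)$ by multiplicativity of the Jones polynomial, valid for all~$s$.

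The inductive step is where the real work lies. Resolving one axis crossing by the recursion expresses $W_D(s,t)$ as a combination of the value with that crossing smoothed into the connecting band (reducing the count by one, hence the inductive hypothesis applies) and the value of the \emph{cut} diagram. The latter has a smaller axis-intersection number, so its bracket carries a pole in $(s^{\onehalf}+s^{-\onehalf})$ at $s=-1$, and the awkward coefficient $-s^{\onehalf}$ (equal to $\mp i$ at $s=-1$) appears. The main obstacle is thus to show that, after writhe-normalization, these contributions recombine precisely at $s=-1$ so that the axis crossing becomes immaterial and the value is unchanged. The integrality of $W_D$ (Proposition~\ref{prop:Integrality}) guarantees that the apparent poles cancel and the limit $s\to -1$ is finite; the heart of the argument is to track this cancellation explicitly and verify that the surviving term is exactly the value with the crossing removed. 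Granting this, the induction terminates at the connected sum and yields $W_D(-1,t) = V_{K_-}(t)\,V_{K_+}(t)$.

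Finally, the ``in particular'' statement follows by evaluating both specializations at the common point $(s,t)=(-1,-1)$, which is unambiguous since $W_D$ is a genuine Laurent polynomial for a knot (Proposition~\ref{prop:Integrality}). By~(1), $W_D(-1,-1) = V_K(-1) = \det(K)$ in the standard normalization, while by~(2), $W_D(-1,-1) = V_{K_-}(-1)\,V_{K_+}(-1) = \det(K_-)\cdot\det(K_+)$. Comparing the two expressions gives both $W_D(-1,-1)=\det(K)$ and the multiplicativity $\det(K)=\det(K_-)\cdot\det(K_+)$ as a consequence.
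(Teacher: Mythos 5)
Your part (1) is correct and is essentially the paper's own argument (setting $B=A$ collapses the refined bracket to Kauffman's bracket and the two writhe corrections to the usual total-writhe normalization), and your treatment of the evaluation at $(s,t)=(-1,-1)$ via integrality of $W_D$ also matches the paper. The problem is part (2): the step you yourself call ``the main obstacle'' and then assume (``Granting this, the induction terminates\dots'') is precisely the entire content of the proof, so the argument has a hole exactly where the work lies. Worse, your description of what must happen at that step is inverted, so the cancellation you propose to ``track explicitly'' does not exist in the form you describe.

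Here is the missing idea. In the recursion $W(\aocr) = -s^{\onehalf}\,W(\ahcr) - s\,W(\avcr)$, \emph{both} resolutions are again symmetric union diagrams (Proposition \ref{prop:SymmetricCrossings}): the vertical resolution $\avcr$ keeps the same number $n$ of components and the same partial knots, while the horizontal resolution $\ahcr$ --- the term carrying the ``awkward'' coefficient $-s^{\onehalf}$ --- has one component \emph{more} (and more axis intersections, not fewer). Applying Proposition \ref{prop:Integrality} not to $D$ but to this resolved diagram gives $W(\ahcr) \in \Z[s^{\pm1},t^{\pm1}]\cdot(s^{\onehalf}+s^{-\onehalf})^{n}$ with $n\ge1$, so $W(\ahcr)$ has a \emph{zero} at $s=-1$, not a pole; there are no poles anywhere in this induction and nothing to cancel. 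Since the coefficient of the other term is $-s$, which equals $1$ at $s=-1$, the recursion immediately yields $W(\aocr)\vert_{s=-1} = W(\avcr)\vert_{s=-1}$ (and likewise for $\aucr$), and the induction closes on $\avcr$, which has one axis crossing fewer and unchanged partial knots; your connected-sum base case then finishes the proof. Note also that your inductive hypothesis, stated for knots, can never be applied to the resolution with an extra component, since that diagram is a two-component link; this is why the paper proves simultaneously, in the same induction, the companion statement that $W_D(-1,t)=0$ for every symmetric union link diagram with at least two components --- that vanishing (equivalently, the divisibility above) is exactly the key fact your proposal lacks.
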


\begin{remark}
  Finding a symmetric union representation $D$ for a ribbon knot $K$
  introduces precious extra structure that can be used to refine 
  the Jones polynomial $V_K(t)$ to a two-variable polynomial $W_D(s,t)$.
  In this sense we can interpret $W_D(s,t)$ as a ``lifting'' 
  of $V_K(t)$ to this richer structure. The specialization
  $s \mapsto t$ forgets the extra information and projects
  back to the initial Jones polynomial.  

  The product formula $W_D(-1,t) = V_{K_-}(t) \cdot V_{K_+}(t)$ is particularly intriguing.
  Recall that for every ribbon (or slice) knot $K$, 
  the Alexander-Conway polynomial is a symmetric product 
  $\Delta_K(t) = f(t) \cdot f(t^{-1})$ for some polynomial $f \in \Z[t^{\pm 1}]$.
  The preceding theorem says that such a symmetric product also appears 
  for the Jones polynomial $V_K(t)$, albeit indirectly via 
  the lifted two-variable polynomial $W_D(s,t)$.
\end{remark}

\begin{remark}
  We use the letter $W$ as a typographical reminder of the symmetry 
  that we wish to capture: $W$ is the symmetric union of two letters $V$,
  just as the $W$-polynomial is the combination of two $V$-polynomials.  
  (This analogy is even more complete in French, where $V$ 
  is pronounced ``v\'e'', while $W$ is pronounced ``double v\'e''.)  
\end{remark}

\subsection{Applications and examples}

In \cite{EisermannLamm:2007} we motivated the question
whether the two symmetric unions of \fref{fig:Knot-9_27}
could be symmetrically equivalent.  (In fact, $9_{27}$ is the first example
in an infinite family of two-bridge ribbon knots, see \sref{sub:TwoBridgeRibbonKnots}.)
Having the $W$-polynomial at hand, we can now answer this question in the negative:

\begin{example} \label{exm:knot-9_27}
  The symmetric union diagrams $D$ (left) and $D'$ (right) of \fref{fig:Knot-9_27}
  both represent the knot $9_{27}$.  The partial knot is $K_\pm = 5_2$ % $C(2,3) = 5_2$
  in both cases, so this is no obstruction to symmetric equivalence
  (see \sref{sub:PartialKnots}).  Calculation of their $W$-polynomials yields:
  \begin{alignat*}{3}
    W_{D}(s,t) &=  1 + \;& s \cdot & g_1(t) - \;& s^2 \cdot & f(t) ,
    \\
    W_{D'}(s,t) &= 1 - & & g_1(t) + & s^{-1} \; \cdot & f(t) ,
  \end{alignat*}
  with
  \begin{align*}
    g_1(t) & = t^{-5}-3t^{-4}+6t^{-3}-9t^{-2}+11t^{-1}-12+11t-9t^2+6t^3-3t^4+t^5 ,
    \\
    f(t) & =  t^{-4}-2t^{-3}+3t^{-2}-4t^{-1}+4-4t+3t^2-2t^3+t^4 .
  \end{align*}
  This proves that $D$ and $D'$ are not equivalent 
  by symmetric Reidemeister moves.

  As an illustration, for both diagrams the specializations $s = -1$ and $s = t$ yield 
  \begin{align*}
    W(-1,t) & = (t-t^2+2t^3-t^4+t^5-t^6)(t^{-1}-t^{-2}+2t^{-3}-t^{-4}+t^{-5}-t^{-6}) , % = V_{K_-}(t) \cdot V_{K_+}(t)
    \\ 
    W(t,t) & = t^{-4}-3t^{-3}+5t^{-2}-7t^{-1}+9-8t+7t^{2}-5t^{3}+3t^{4}-t^{5} .  % = V_K(t)
  \end{align*}
  
  Notice that $W(s,t)$ captures the symmetry, which is lost 
  when we pass to the Jones polynomial $V(t) = W(t,t)$.
  The latter does not seem to feature any special properties.
\end{example}

\begin{remark}
  Symmetric Reidemeister moves do not change the ribbon surface, 
  see Remark \ref{rem:MovesRespectSurface} below.
  Possibly the more profound difference between the 
  two symmetric union presentations $D$ and $D'$ of the knot $9_{27}$ is that 
  they define essentially distinct ribbon surfaces $S$ and $S'$ bounding the same knot $9_{27}$.
  % This leads to the natural question: does the difference persist 
  % if we forget about symmetry and only retain the ribbon surfaces $S$ and $S'$?
  To study this problem we would like to concoct an invariant $S \mapsto
  W_S(s,t)$ of (not necessarily symmetric) ribbon surfaces $S \subset \R^3$. 
  Ideally this would generalize our $W$-polynomial $W_D(s,t)$
  and likewise specialize to the Jones polynomial $V_K(t)$.
  In any case Figure \ref{fig:Knot-9_27} will provide a good test case
  to illustrate the strength of this extended invariant yet to be constructed.
\end{remark}

\subsection{Open questions}

Our construction works fine for symmetric unions, and we are 
convinced that this case is sufficiently important to merit investigation.
Ultimately, however, we are interested in ribbon knots.
Two possible paths are imaginable:

\begin{question}
  Can every ribbon knot be presented as a symmetric union?
\end{question}

Although this would be a very attractive presentation,
it seems rather unlikely.  

\begin{question}
  Is there a natural extension of the $W$-polynomial to ribbon knots?
\end{question}

This seems more plausible, but again 
such a construction is far from obvious.

The right setting to formulate these questions
is the following instance of ``knots with extra structure'',
where the vertical arrows are the obvious forgetful maps:
\[
\begin{CD}
  \left\{\begin{matrix}\text{symmetric}\\\text{unions}\end{matrix}\right\} 
  @>>> \left\{\begin{matrix}\text{ribbon knots +}\\\text{specific ribbon}\end{matrix}\right\} 
  @>>> \left\{\begin{matrix}\text{slice knots +}\\\text{specific slice}\end{matrix}\right\} 
  \\
  @VVV @VVV @VVV
  \\
  \left\{\begin{matrix}\text{symmetrizable}\\\text{ribbon knots}\end{matrix}\right\} 
  @>>> \{\text{ribbon knots}\}
  @>>> \{\text{slice knots}\}
\end{CD}
\]

Some natural questions are then:
Which ribbon knots are symmetrizable?
Which ribbons can be presented as symmetric unions?
Under which conditions is such a presentation unique?
(The analogous questions for the passage from slice to ribbon 
have already attracted much attention over the last 50 years.)

\begin{question}
  Can we construct an analogue of the $W$-polynomial
  for ribbon knots with a specified ribbon?  Does it extend
  the $W$-polynomial of symmetric unions, or do we have to pass 
  to a suitable quotient? % by introducing some algebraic relations?
\end{question}

\begin{question}
  Can one obtain in this way an obstruction for a knot to be ribbon? 
  Or an obstruction to being a symmetric union?
  (Although the $W$-polynomial captures the symmetry condition,
  it does not yet seem to provide such an obstruction.)
\end{question}

\begin{question}
  Are there similarly refined versions of the \textsc{Homflypt}
  and Kauffman polynomials?  Do we obtain equally nice properties?
\end{question}

\subsection{How this article is organized}

The article follows the program laid out in the introduction.  
Section \ref{sec:Definitions} expounds the necessary facts 
about symmetric diagrams (\sref{sub:SymmetricDiagrams}) and 
in particular symmetric unions (\sref{sub:SymmetricUnions}).
We then recall symmetric Reidemeister moves (\sref{sub:SymmetricMoves}) 
and sketch a symmetric Reidemeister theorem (\sref{sub:SymmetricReidemeisterTheorem}).
This is completed by a brief discussion of partial knots (\sref{sub:PartialKnots}) 
and Reidemeister moves respecting the axis (\sref{sub:AsymmetricMoves}).

Section \ref{sec:BracketConstruction} is devoted to the construction 
of the two-variable bracket (\sref{sub:BracketConstruction}) and 
its normalized version, the $W$-polynomial (\sref{sub:Normalization}).
In Section \ref{sec:GeneralProperties} we establish some general properties 
analogous to those of the Jones polynomial. 
Section \ref{sec:SymmetricUnionPolynomial} focuses on properties that are 
specific for symmetric union diagrams (\sref{sub:SymmetricUnionPolynomial}),
in particular integrality (\sref{sub:Integrality}) and special values 
in $t$ and $s$ (\sref{sub:SpecialValues:t}-\sref{sub:SpecialValues:s}).

Section \ref{sec:Examples} discusses examples and applications:
we compile a list of symmetric union diagrams and their $W$-polynomials
for all ribbon knots up to $10$ crossings (\sref{sub:SmallRibbonKnots})
and study two infinite families of symmetric union diagrams 
of two-bridge ribbon knots (\sref{sub:TwoBridgeRibbonKnots}).

\subsection{Acknowledgements}

The authors would like to thank Adam Sikora for helpful discussions
in Warsaw 2007.  This work was begun in the winter term 2006/2007 when 
the first author was on a sabbatical funded by a research contract
\textit{d\'el\'egation aupr\`es du CNRS}, whose support is gratefully acknowledged.

%%%%%%%%%%%%%%%%%%%%%%%%%%%%%%%%%%%%%%%%%%%%%%%%%%%%%%%%%%%%%%%%%%%%%%%%%%%%%

\section{Symmetric diagrams and symmetric equivalence} \label{sec:Definitions}

In this section we discuss symmetric diagrams and symmetric Reidemeister moves.  
Since we will use them in the next section to define our two-variable refinement 
of the Jones polynomial, we wish to prepare the stage in sufficient detail. 
It will turn out that our construction of the $W$-polynomial 
applies not only to symmetric unions but more generally 
to diagrams that are transverse to some fixed axis.  
In fact, the skein relations that we employ will destroy
the symmetry and thus make this generalization necessary.

\subsection{Symmetric diagrams} \label{sub:SymmetricDiagrams}

We consider the plane $\R^2$ with the reflection
$\rho\colon \R^2 \to \R^2$ defined by $(x,y) \mapsto (-x,y)$.
The map $\rho$ reverses the orientation of $\R^2$ and its 
fixed-point set is the vertical axis $\{0\} \times \R$.

\begin{definition}
  A link diagram $D \subset \R^2$ is \emph{symmetric} 
  if it satisfies $\rho(D) = D$ except for crossings 
  on the axis, which are necessarily reversed.
  By convention we consider two diagrams $D$ and $D'$ as identical 
  if they differ only by an orientation preserving diffeomorphism 
  $h \colon \R^2 \isoto \R^2$ respecting the symmetry, in the sense 
  that $h(D) = D'$ with $h \circ \rho = \rho \circ h$.
\end{definition}

\begin{figure}[hbtp]
  \centering
  \hfill
  \subfigure[the knot $6_1$]{\includegraphics[scale=1.0]{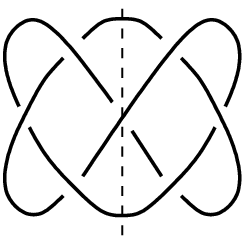}}
  \hfill
  \subfigure[the trefoil knot]{\includegraphics[scale=1.0]{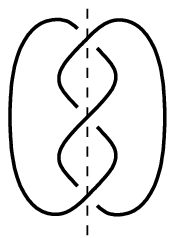}}
  \hfill
  \subfigure[the Hopf link]{\includegraphics[scale=1.0]{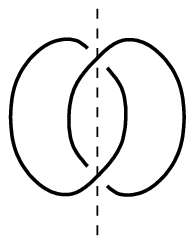}}
  \hfill{}
  \caption{Three types of symmetric diagrams}
  \label{fig:SymmetricDiagrams}
\end{figure} 

\begin{remark} \label{rem:ThreeComponentTypes}
  Each component $C$ of a symmetric diagram is of one of three types:
  \begin{enumerate}
  \item[(a)]
    The reflection $\rho$ maps $C$ to itself reversing 
    the orientation, as in \fref{fig:SymmetricDiagrams}a.
  \item[(b)]
    The reflection $\rho$ maps $C$ to itself preserving 
    the orientation, as in \fref{fig:SymmetricDiagrams}b.
  \item[(c)]
    The reflection $\rho$ maps $C$ to another component $\rho(C) \ne C$,
    as in \fref{fig:SymmetricDiagrams}c.
  \end{enumerate}
  
  Each component $C$ can traverse the axis in an arbitrary number of crossings.
  In cases (a) and (b) these are pure crossings where the component $C$ 
  crosses itself, while in case (c) they are mixed crossings 
  between the component $C$ and its symmetric partner $\rho(C)$.
  
  Moreover, the component $C$ can traverse the axis 
  without crossing any other strand;  assuming smoothness 
  this is necessarily a perpendicular traversal.
  In case (a) there are precisely two traversals of this kind, 
  while in cases (b) and (c) there are none.
\end{remark}

\subsection{Symmetric unions} \label{sub:SymmetricUnions}

In view of the preceding discussion of symmetric diagrams, 
we single out the case of interest to us here:

\begin{definition}
  We say that a link diagram $D$ is a \emph{symmetric union} 
  if it is symmetric, $\rho(D)=D$, and each component is of type (a).
  This means that each component perpendicularly traverses the axis 
  in exactly two points that are not crossings, and upon reflection 
  it is mapped to itself reversing the orientation.
\end{definition}

While symmetric diagrams in general are already interesting, 
symmetric unions feature even more remarkable properties.
Most notably they are ribbon links:

\begin{definition}
  Let $\Sigma$ be a compact surface, not necessarily connected nor orientable.
  A \emph{ribbon surface} is a smooth immersion $f \colon \Sigma \looparrowright \R^3$ 
  whose only singularities are ribbon singularities according to 
  the local model shown in \fref{fig:RibbonSingularity}a:
  the surface intersects itself in an interval $A$, whose
  preimage $f^{-1}(A)$ consists of one interval in the interior of $\Sigma$
  and a second, properly embedded interval, running from boundary to boundary.
\end{definition}

\begin{figure}[hbtp]
  \centering
  \hfill
  \subfigure[Local model of a ribbon singularity]
  {\figbox{25ex}{30ex}{\includegraphics[width=30ex]{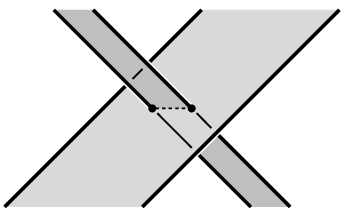}}}
  \hfill
  \subfigure[The knot $8_{20}$ bounding a disk with two ribbon singularities (dotted lines)]
  {\figbox{25ex}{30ex}{\includegraphics[width=20ex]{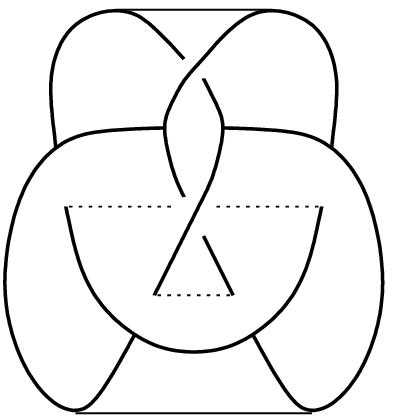}}}
  \hfill{}
  \caption{An immersed disk with ribbon singularities}
  \label{fig:RibbonSingularity}
\end{figure} 

\begin{definition}
  A link $L \subset \R^3$ is said to be a \emph{ribbon link}
  if it bounds a ribbon surface consisting of disks.
  (\fref{fig:RibbonSingularity}b shows an example.)
\end{definition}

\begin{proposition} \label{prop:SymmetricRibbonDisks}
  Every symmetric union diagram $D$ represents a ribbon link.
\end{proposition}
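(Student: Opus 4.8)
The plan is to exhibit, for each component of $D$, an explicitly immersed disk in $\R^3$ whose only singularities are ribbon singularities of the type depicted in \fref{fig:RibbonSingularity}a. Since the construction will be local near the axis and near the crossings, it applies componentwise, and the disks belonging to different components will meet each other only in the same transverse fashion; hence the disjoint union of these disks is a ribbon surface bounding the link represented by $D$. It therefore suffices to describe the construction for a single component, i.e. for a symmetric union knot $K$, and to keep track of the singularities.

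The first step is to promote the planar symmetry to space. Realising $D$ as a knot $K \subset \R^3$ in a neighbourhood of the diagram plane, the reflection $\rho$ lifts to the half-turn rotation $R \colon (x,y,z) \mapsto (-x,y,-z)$ about the axis $\{0\} \times \R \times \{0\}$. This is the orientation-preserving lift that \emph{reverses} the crossings on the axis, exactly as demanded by the symmetric union convention, so that $K$ becomes $R$-invariant. By definition of a type (a) component, $K$ meets the fixed axis precisely in the two perpendicular traversals $p$ and $q$; these split $K$ into an arc $a$ and its mirror $a' = R(a)$, with $a \cap a' = \{p,q\}$. Intuitively, $K$ runs ``out'' along $a$, tracing the knotting of the partial knot, and returns ``back'' along the exact mirror $a'$.

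The second step is to span $K$ by the obvious symmetric disk $\Delta$: a thin band whose core runs along the axis segment from $p$ to $q$ and whose boundary, once the two short ends are pinched to the points $p$ and $q$, traces out $a \cup a' = K$. Concretely one hugs $a$ on the $x \ge 0$ side and $a'$ on the $x \le 0$ side, joining the two strips near $p$, near $q$, and near each crossing that sits on the axis, so that $\Delta = \delta \cup R(\delta)$ is symmetric under $R$. By construction $\partial \Delta = K$ and $\Delta$ is a disk. Its self-intersections are of exactly two kinds: where $a$ crosses itself off the axis, the strip along $a$ passes through itself, and where $a$ crosses $a'$ on the axis, the $a$-strip passes through the $a'$-strip. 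In each case one sheet meets the other transversally in an interval.

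The main obstacle — and the only substantive point — is to verify that each of these intersections is a \emph{ribbon} singularity and that no clasp can arise: one must check that every intersection interval $A$ has preimage $f^{-1}(A)$ consisting of one properly embedded arc of $\Delta$ together with one arc interior to $\Delta$, matching the local model of \fref{fig:RibbonSingularity}a. This is precisely where the mirror symmetry is indispensable. A general knot (such as the partial knot $K_\pm = 5_2$ of \fref{fig:Knot-9_27}) need not bound any ribbon disk at all; what saves us is that the returning arc $a'$ is the \emph{exact} reflection of $a$, so that the two sheets are forced to pierce one another cleanly rather than to clasp. I would carry out this verification crossing by crossing, treating the off-axis crossings (self-passages of $\delta$) and the on-axis crossings (passages of $\delta$ through $R(\delta)$) separately, and then invoke the locality of the whole argument to conclude that an arbitrary symmetric union link diagram bounds a ribbon surface made of disks.
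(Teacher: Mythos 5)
Your construction is in essence the same as the paper's (a disk ruled by arcs joining each point of one half-arc to its mirror partner on the other), but your first step contains a genuine error on which the rest depends. The link $K$ realizing a symmetric union diagram is \emph{not} invariant under the half-turn $R\colon(x,y,z)\mapsto(-x,y,-z)$. By definition of a symmetric diagram, the planar reflection preserves the over/under data at every crossing \emph{off} the axis; only the crossings \emph{on} the axis are reversed. A realization of $D$ near the diagram plane is therefore invariant, away from the axis, under the mirror reflection $\tilde\rho\colon(x,y,z)\mapsto(-x,y,z)$; your $R$ does restore invariance at the on-axis crossings, but since it flips $z$ it simultaneously reverses every off-axis crossing, so $R(K)\neq K$ as soon as the partial knots have a single crossing. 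Concretely: for an arc $a$ realizing the partial knot $K_+$, the closed curve $a\cup R(a)$ is (up to the twists on the axis) the granny-type sum $K_+\sharp K_+$, whereas the symmetric union is the square-type sum $K_+\sharp K_-$; and $K_+\sharp K_+$ is in general not ribbon (e.g.\ $3_1\sharp 3_1$ has non-zero signature), so no argument built on $R$-symmetry could possibly succeed. As written, $a'\neq R(a)$, the disk $\delta\cup R(\delta)$ is not bounded by $K$, and the ``exact reflection'' your no-clasp claim invokes is the wrong map. The correct statement is mixed: $K$ is mirror-symmetric off the axis and only rotationally symmetric in small neighbourhoods of the on-axis crossings; this is exactly why the paper's proof imposes equivariance of the lift $\tilde{g}$ only outside an arbitrarily small neighbourhood of the reflection plane, ``to allow for twists''.

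Second, the step you yourself call ``the main obstacle --- and the only substantive point'', namely that every double arc is ribbon and no clasp occurs, is only announced, not carried out. (The paper is also terse here, invoking a generic choice of lifting, but its setup makes the claim correct and checkable.) Moreover, organizing the check ``crossing by crossing'' reflects an inaccurate picture of where the singularities sit: at a crossing of $D$ the two sheets of the ruled disk lie at different heights and do not meet there; the double arcs occur at levels where two strands of $K$ exchange heights, generically between crossings. What actually forces ribbon-ness is a nesting property: each rung of the disk projects onto an interval symmetric about the axis, so the projections of any two rungs at the same level are nested, never partially overlapping; hence a double arc runs along a full rung of the inner sheet (properly embedded, boundary to boundary) while remaining interior to the outer sheet, which is precisely the ribbon condition, while a clasp would require partially overlapping rungs. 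Supplying this argument, with the mirror reflection $\tilde\rho$ in place of $R$ and the on-axis crossings treated as exceptional twist regions, is what it would take to turn your outline into a proof.
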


\begin{proof}
  The essential idea can be seen in \fref{fig:RibbonSingularity}b;
  the following proof simply formalizes this construction.
  We equip the disk $\D^2 = \{ z \in \R^2 \mid |z| \le 1 \}$ with 
  the induced action of the reflection $\rho \colon (x,y) \mapsto (-x,y)$, 
  and extend this action to $\Sigma = \{1,\dots,n\} \times \D^2$.
  The symmetric diagram $D$ can be parametrized by an equivariant plane curve 
  $g \colon \partial \Sigma \to \R^2$, satisfying $g \circ \rho = \rho \circ g$.
  We realize the associated link by a suitable lifting 
  $\tilde{g} \colon \partial \Sigma \to \R^3$ that projects to 
  $g = p \circ \tilde{g}$ via $p \colon \R^3 \to \R^2$, $(x,y,z) \mapsto (x,y)$.  
  We denote by $\tilde\rho \colon \R^3 \to \R^3$ the reflection
  $\tilde\rho \colon (x,y,z) \mapsto (-x,y,z)$.
  We can achieve $\tilde{g} \circ \rho = \tilde\rho \circ \tilde{g}$ except 
  in an arbitrarily small neighbourhood of the reflection plane 
  $\{0\} \times \R^2$ to allow for twists. % on the axis.
  The map $\tilde{g}$ can be extended to a map $f \colon \Sigma \to \R^3$
  by connecting symmetric points by a straight line:
  \[
  f\bigl( (1-t) \cdot s + t \cdot \rho(s) \bigr) 
  = (1-t) \cdot \tilde{g}(s) + t \cdot \tilde{g}(\rho(s))
  \]
  for each $s \in \partial \Sigma$ and $t \in [0,1]$.  
  If we choose the lifting $\tilde{g}$ of $g$ generically, 
  then $f$ will be the desired ribbon immersion.
\end{proof}

An analogous construction can be carried out for an arbitrary symmetric diagram:

\begin{proposition} \label{prop:SymmetricRibbonSurface}
  Every symmetric diagram $D$ represents a link $L$ 
  together with a ribbon surface $f \colon \Sigma \looparrowright \R^3$ of the following type:
  \begin{enumerate}
  \item[(a)]
    Each component of type (a) bounds an immersed disk.
  \item[(b)]
    Each component of type (b) bounds an immersed M\"obius band.
  \item[(c)]
    Each pair of components of type (c) bounds an immersed annulus.
    \qed  
  \end{enumerate}
\end{proposition}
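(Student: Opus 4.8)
The plan is to adapt the proof of Proposition \ref{prop:SymmetricRibbonDisks} almost verbatim, replacing its single building block (the disk) by a model surface chosen according to the three component types of Remark \ref{rem:ThreeComponentTypes}. To each component, or pair of components, I would attach an abstract surface carrying an involution $\sigma$ that models the reflection: for type (a) the disk $\D^2$ with $\sigma(x,y)=(-x,y)$, whose fixed set is the vertical diameter meeting $\partial\D^2$ in the two perpendicular traversals; for type (c) the annulus $S^1\times[-1,1]$ with $\sigma(\theta,r)=(\theta,-r)$, which swaps the two boundary circles (realizing $C$ and $\rho(C)$) and fixes the core circle; and for type (b) the M\"obius band $M=(S^1\times[-1,1])/\langle(\theta,r)\mapsto(\theta+\pi,-r)\rangle$, on which the same formula $\sigma(\theta,r)=(\theta,-r)$ descends to a well-defined involution that fixes the core circle and acts on the single boundary circle $\partial M=C$ as the fixed-point-free antipodal map. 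Letting $\Sigma$ be the disjoint union of one such piece per component of the appropriate type, I obtain a compact surface with involution $(\Sigma,\sigma)$ whose boundary $\partial\Sigma$ is equivariantly identified with the diagram $D$.

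The conceptual point justifying these three choices is the behaviour of $\sigma$ near its fixed-point set. Since $f$ is to intertwine $\sigma$ with the reflection $\tilde\rho$, and $S$ is generically transverse to the plane along its fixed locus, the involution $\sigma$ is locally a reflection, so $\Sigma^\sigma$ consists of arcs and circles mapping into the plane $\{0\}\times\R^2$. On a boundary circle $\sigma$ therefore either has exactly two fixed points or is fixed-point-free, and this dichotomy is dictated by the component type: orientation-reversing with two fixed points in case (a), fixed-point-free and orientation-preserving (antipodal) in case (b), and a swap of two circles in case (c). A reflection-type involution on a disk necessarily fixes boundary points, so the free boundary involution of a type-(b) component cannot be filled by a disk; the M\"obius band, whose core circle supplies an interior fixed locus, is the natural equivariant filling, and likewise the annulus is forced in case (c). In each case the homeomorphism type claimed in the proposition is then built into the model.

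With $(\Sigma,\sigma)$ fixed, the remainder of the argument is identical to the type-(a) case: I would parametrize $D$ by an equivariant curve $g\colon\partial\Sigma\to\R^2$ with $g\circ\sigma=\rho\circ g$, choose a lift $\tilde g\colon\partial\Sigma\to\R^3$ satisfying $\tilde g\circ\sigma=\tilde\rho\circ\tilde g$ outside an arbitrarily small neighbourhood of the plane (the exceptions accounting for the twists at the crossings on the axis), and extend $\tilde g$ to $f\colon\Sigma\to\R^3$ by the straight-line formula $f\bigl((1-u)s+u\sigma(s)\bigr)=(1-u)\tilde g(s)+u\tilde g(\sigma(s))$. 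The step I expect to require the most care is verifying that for a generic lift $\tilde g$ the map $f$ is a genuine ribbon immersion, its only singularities being of the local type of \fref{fig:RibbonSingularity}a; this is a general-position argument ruling out branch points, triple points and clasp-type intersections, exactly as in Proposition \ref{prop:SymmetricRibbonDisks}. The only genuinely new feature is that $\Sigma^\sigma$ now contains closed curves lying in the reflection plane, namely the cores of the annuli and M\"obius bands, so one must check that the local twisting introduced near the axis crossings is compatible with these interior fixed circles; this too is arranged by a generic choice and introduces no singularities worse than ribbon ones.
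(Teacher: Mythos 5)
Your proposal is correct and takes essentially the same route as the paper: the paper states this proposition with no written proof beyond the preceding remark that an ``analogous construction'' to Proposition \ref{prop:SymmetricRibbonDisks} applies, and your argument is exactly that analogue, with the correct equivariant models (disk with reflection for type (a), M\"obius band with core-fixing involution acting antipodally on the boundary for type (b), annulus with boundary-swapping involution for type (c)) followed by the same equivariant lift $\tilde g$ and chord-wise extension. The only minor point is notational: on the abstract annulus and M\"obius band the expression $(1-u)s+u\sigma(s)$ has no literal linear meaning and must be read as the point at parameter $u$ on the fiber arc joining $s$ to $\sigma(s)$ (these arcs foliate each model surface), which is clearly what your construction intends.
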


Let us add a remark that will be useful in \sref{sub:Integrality}.
Each disk contributes an Euler characteristic $1$ 
whereas annuli and M\"obius bands contribute $0$.
We conclude that $L$ bounds a ribbon surface of 
Euler characteristic $\chi(\Sigma) = n$, 
where $n$ is the number of components of type (a).
Moreover, since $D$ is symmetric, it perpendicularly traverses the axis 
precisely $2n$ times, twice for each component of type (a).

\subsection{Symmetric Reidemeister moves} \label{sub:SymmetricMoves}

Symmetric diagrams naturally lead to the 
following notion of symmetric Reidemeister moves:

\begin{definition}
  We consider a knot or link diagram that is symmetric with respect to 
  the reflection $\rho$ along the axis $\{0\} \times \R$.

  A \emph{symmetric Reidemeister move off the axis} is an ordinary
  Reidemeister move as depicted in \fref{fig:Rmoves} carried out 
  simultaneously with its mirror-symmetric counterpart.
  
  A \emph{symmetric Reidemeister move on the axis}
  is either an ordinary Reidemeister move (S1--S3) 
  or a generalized Reidemeister move (S2$\pm$ or S4)
  as depicted in \fref{fig:Smoves}.

  Subsuming both cases, a \emph{symmetric Reidemeister move}
  is one of the previous two types, either on or off the axis.
\end{definition}

\begin{figure}[hbtp]
  \centering
  \includegraphics[scale=0.8]{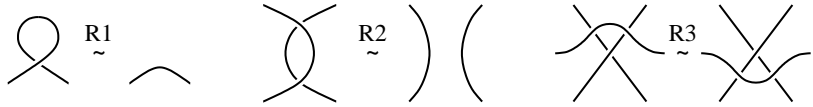}
  \caption{The classical Reidemeister moves (off the axis)}
  \label{fig:Rmoves}
\end{figure} 

\begin{figure}[hbtp]
  \centering
  \includegraphics[scale=0.8]{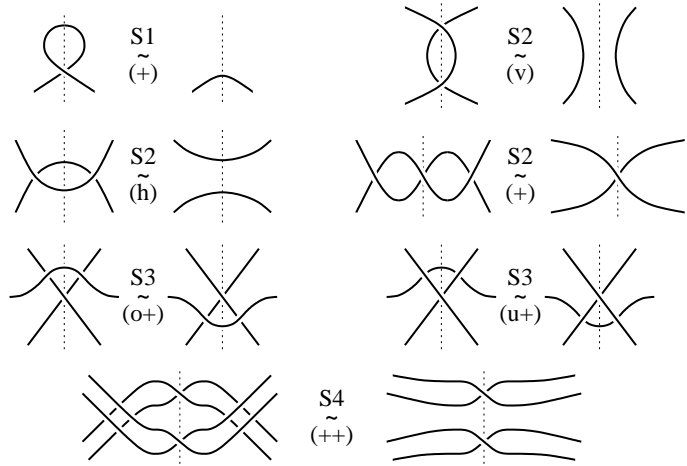}
  \caption{Symmetric Reidemeister moves on the axis}
  \label{fig:Smoves}
\end{figure} 

\begin{remark} 
  We usually try to take advantage of symmetries
  in order to reduce the number of local moves.
  By convention the axis is not oriented, which means that we can 
  turn all local pictures in \fref{fig:Smoves} upside-down.
  This adds one variant for each S1-, S2-, and S4-move shown here;
  the four S3-moves are each invariant under this rotation.
  We can also reflect each local picture along the axis,
  which exchanges the pairs S1$\pm$, S2$\pm$, S3o$\pm$, S3u$\pm$.
  Finally, we can rotate about the axis, which exchanges S3o and S3u.
  The S4-move, finally, comes in four variants, obtained by 
  changing the over- and under-crossings on the axis.
\end{remark}

\begin{remark}
  The S1 and S2v moves are special cases of a flype move 
  along the axis, as depicted in \fref{fig:FlypeAlongAxis}.
  The introduction of such flypes provides a strict generalization,
  because complex flypes along the axis can in general 
  not be generated by the above Reidemeister moves, 
  as observed in Remark \ref{rem:FlypePartialKnots} below.
  In particular, a half-turn of the entire diagram around the axis 
  can be realized by flypes, but not by symmetric Reidemeister moves.
\end{remark}

\begin{figure}[hbtp]
  \centering
  \includegraphics[scale=0.8]{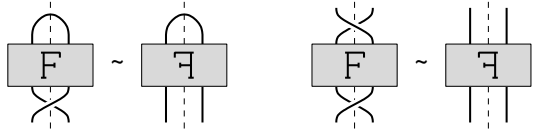}
  \caption{A vertical flype along the axis}
  \label{fig:FlypeAlongAxis}
\end{figure} 

\begin{remark} \label{rem:MovesRespectSurface}
  Symmetric Reidemeister moves as well as flypes
  preserve the ribbon surface constructed in
  Proposition \ref{prop:SymmetricRibbonSurface}: 
  every such move extends to an isotopy of the surface, 
  perhaps creating or deleting redundant ribbon singularities.
\end{remark}

\subsection{A  symmetric Reidemeister theorem} \label{sub:SymmetricReidemeisterTheorem}

In this article we shall consider the symmetric moves above
as \emph{defining} symmetric equivalence.  
Two natural questions are in order.  On the one hand 
one might wonder whether our list could be shortened.
This is not the case, in particular the somewhat unexpected 
moves S2$\pm$ and S4 are necessary in the sense that they cannot 
be generated by the other moves \cite[Thm.\,2.3]{EisermannLamm:2007}.

On the other hand one may ask whether our list is complete.
In order to make sense of this question and to derive a symmetric 
Reidemeister theorem, we wish to set up a correspondence
between symmetric Reidemeister moves of symmetric diagrams 
and symmetric isotopy of symmetric links in $\R^3$.

The na\"ive formulation, however, will not work because crossings 
on the axis inhibit strict symmetry: links realizing 
symmetric union diagrams are mirror-symmetric off the axis 
but rotationally symmetric close to the axis. 

One way to circumvent this difficulty is to represent each crossing
on the axis by a singularity $\ascr$ together with a sign that
specifies its resolution: $\ascr \mathbin{\smash{\overset{+}{\mapsto}}} \aocr$
resp.\ $\ascr \mathbin{\smash{\overset{-}{\mapsto}}} \aucr$.
This reformulation ensures that the (singular) link
is strictly mirror-symmetric.  The signs can be chosen
arbitrarily and encode the symmetry defect after resolution.

More formally, a singular link is an immersion
$f \colon \{1,\dots,n\} \times \S^1 \hookrightarrow \R^3$
whose only multiple points are non-degenerate double points.
We shall not distinguish between different parametrizations
and thus identify the immersion $f$ and its image $L$.
We can then consider singular links $L \subset \R^3$ 
satisfying the following conditions:
\begin{description}
\item[Transversality]
  $L$ is transverse to $E = \{0\} \times \R^2$, and each double point lies on $E$.
\item[Symmetry]
  $L$ is symmetric with respect to reflection along $E$.
\end{description}
For such links we have the obvious notion of isotopy, that is, 
a smooth family $(L_t)_{t \in [0,1]}$ such that each $L_t$ satisfies 
the above transversality and symmetry requirements.
If the singularities are equipped with signs, then these
signs are carried along the isotopy in the obvious way.

\begin{theorem}
  Consider two symmetric diagrams $D_0$ and $D_1$ and 
  the associated symmetric (singular) links $L_0$ and $L_1$.
  If the links $L_0$ and $L_1$ are symmetrically isotopic
  then the diagrams $D_0$ and $D_1$ are symmetrically equivalent.
\end{theorem}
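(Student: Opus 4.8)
The plan is to establish an equivariant analogue of the standard proof of the Reidemeister theorem. Write $\tilde\rho\colon\R^3\to\R^3$, $(x,y,z)\mapsto(-x,y,z)$ for the reflection fixing the plane $E=\{0\}\times\R^2$, and recall that the projection $p\colon\R^3\to\R^2$, $(x,y,z)\mapsto(x,y)$ intertwines $\tilde\rho$ with $\rho$. A symmetric isotopy $(L_t)_{t\in[0,1]}$ from $L_0$ to $L_1$ projects to a one-parameter family of plane diagrams $D_t=p(L_t)$, each symmetric by equivariance of $p$. The idea is to perturb the isotopy, keeping it symmetric and transverse to $E$, into general position so that this family is a regular diagram for all but finitely many parameter values, and then to read off from each isolated degeneration precisely one symmetric Reidemeister move.

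First I would set up equivariant transversality. The relevant configuration space is the space of symmetric singular links satisfying the Transversality and Symmetry conditions, and a path in it is exactly a symmetric isotopy. Using $\Z/2$-equivariant jet transversality (Thom transversality in the presence of the involution $\tilde\rho$ and its fixed plane $E$), I would perturb $(L_t)$ \emph{equivariantly}, so that symmetry is preserved throughout, until the projected family $(D_t)$ meets the discriminant of non-regular diagrams transversally. After this perturbation all but finitely many $D_t$ are regular symmetric diagrams, and at each of the finitely many critical times exactly one codimension-one degeneration occurs. The signs attached to the on-axis singular double points are locally constant data carried along the isotopy, so they are unambiguously tracked between critical times.

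Next I would classify the codimension-one degenerations of the projected family. Because the whole family is $\tilde\rho$-symmetric, any degeneration at a point off the axis is accompanied by its mirror image at the reflected point; such a mirror pair of simultaneous elementary moves is, by definition, a symmetric Reidemeister move off the axis (a move of \fref{fig:Rmoves} performed together with its reflected counterpart). The substantive case is a degeneration lying on the fixed plane $E$. There the reflection acts on the local model, so the admissible events are the $\rho$-invariant degenerations of the projection of a diagram transverse to $E$: a cusp, a tangency of two strands, or a triple point centred on the axis, together with the passage of a strand across one of the on-axis singular double points. Enumerating these $\rho$-equivariant local pictures, I would verify that they correspond exactly to the on-axis moves S1, S2, S3 and to the generalized moves S2$\pm$ and S4 of \fref{fig:Smoves}; the latter are precisely the events in which a strand interacts with an on-axis double point, which is why the enlarged move set is forced. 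Concatenating the moves read off at successive critical times, interspersed with symmetric planar isotopies (which do not alter a diagram up to our convention of identity), then exhibits $D_0$ and $D_1$ as symmetrically equivalent.

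The main obstacle is the analysis on the fixed plane $E$: ordinary transversality must be upgraded to the equivariant setting, where the stratification of degenerate projections has to be described relative to the involution, and one must check that the list of on-axis codimension-one strata is complete and matches \fref{fig:Smoves} with no exotic $\rho$-invariant event left out. A convenient way to organize this is to pass to the half-space $\{x\ge 0\}$, where a symmetric link restricts to a tangle whose endpoints and on-axis double points lie in $E$; the on-axis events then become the relative boundary moves of this tangle, and checking completeness reduces to enumerating how a strand can touch the boundary plane or slide across the signed double points it carries. Correctly matching these boundary events — including the sign bookkeeping that distinguishes S2$+$ from S2$-$ and produces the four variants of S4 — is the delicate point; the remainder is the routine equivariant adaptation of the classical argument.
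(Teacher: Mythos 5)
Your overall strategy---put the symmetric isotopy into generic position, observe that off-axis degenerations occur in mirror pairs and hence give symmetric R-moves, and classify the on-axis codimension-one events---is exactly the paper's argument. But your stated classification of the on-axis events is wrong in two respects, and since this enumeration is the entire content of the proof (the rest is the standard Reidemeister argument), the errors are not cosmetic. First, a cusp on the axis cannot occur, so no event ever produces an S1 move: the definition of symmetric isotopy requires each $L_t$ to remain transverse to the plane $E=\{0\}\times\R^2$, and a strand meeting $E$ at a non-singular point is locally $\rho$-invariant, hence projects to a perpendicular traversal of the axis with non-vanishing tangent vector. More robustly, the on-axis crossings of a generic diagram in the family correspond bijectively to the singular double points of $L_t$, whose number is constant along any isotopy of singular links, whereas an S1 move (or an S2v move) changes the number of on-axis crossings. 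This is precisely the point of the remark following the theorem: S1 and S2v can only be realized by temporarily breaking the symmetry, which is why the paper adds them to the move list by hand rather than deriving them from isotopy events. Your claim that the equivariant enumeration ``corresponds exactly to S1, S2, S3, S2$\pm$, S4'' is therefore not something a correct verification could ever return.

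Second, the matching of the remaining events to moves is scrambled. The passage of a strand across an on-axis double point is the S3 event (in projection it is the on-axis triple point, two of whose three branches form the singular crossing); it is not the source of S2$\pm$ or S4. The S4 event is the crossing of \emph{two singular double points} (their projections exchange position along the axis), and the S2$\pm$ event---absent from your list---involves a single double point and no other strand: it occurs when the projected tangent lines of the two branches at the double point become collinear, which creates or destroys a mirror pair of off-axis crossings adjacent to the on-axis one. A tangency of two non-singular strands on the axis gives S2h (two perpendicular traversals sliding past each other, producing a mirror pair of off-axis crossings); S2v, like S1, never occurs. So the correct list of on-axis events is: tangency of two traversals (S2h), tangent-line degeneration at a double point (S2$\pm$), traversal crossing a double point (S3), and two double points crossing (S4). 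Your equivariant-transversality framework and the half-space tangle bookkeeping could certainly support this corrected enumeration, but as written the key step of your proof asserts a classification that is false in this setting.
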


\begin{proof}[Sketch of proof]
  We can put the isotopy $(L_t)_{t \in [0,1]}$ into generic position such that 
  for all but a finite number of parameters $0 < t_1 < \dots < t_k < 1$
  the link $L_t$ projects to a symmetric diagram.  In particular,
  the diagrams between two successive parameters $t_i$ and $t_{i+1}$
  differ only by an isotopy of the plane and are essentially the same.
  Moreover we can arrange that at each exceptional parameter $t_i$
  the modification is of the simplest possible type:

  Events off the axis:
  \begin{itemize}
  \item
    The projection of a tangent line degenerates to a point: R1 move.
  \item
    Two tangent lines co\"incide in projection: R2 move.
  \item
    The projection produces a triple point: R3 move.
  \end{itemize}

  Events on the axis:
  \begin{itemize}
  \item
    Two tangent lines co\"incide in projection: S2h move.
  \item
    The tangent lines of a singular point become collinear in projection: S2$\pm$ move.
  \item
    A strand crosses a singular point: S3 move.
  \item
    Two singular points cross: S4 move.
  \end{itemize}

  The details of this case distinction shall be omitted.
\end{proof}

\begin{remark}
  We emphasize that, in the above setting of symmetric isotopy,
  moves of type S1 and S2v cannot occur.  Such isotopies can
  be realized only by temporarily breaking the symmetry.
  Instead of further enlarging the notion of isotopy in order 
  to allow for the creation and deletion of singularities,
  we simply introduce S1 and S2v as additional moves.
  We usually even allow the more general flype moves
  depicted in \fref{fig:FlypeAlongAxis}.
\end{remark}

\subsection{Partial knots} \label{sub:PartialKnots}

We are particularly interested in \emph{symmetric union knot diagrams}, 
where we require the symmetric union diagram 
to represent a knot $K$, that is, a one-component link.
As mentioned in the introduction, a symmetric union diagram 
of $K$ looks like the connected sum $K_+ \sharp K_-$
of a knot $K_+$ and its mirror image $K_-$, 
with additional crossings inserted on the symmetry axis.  
The following construction makes this observation precise:

\begin{definition}
  For every symmetric union knot diagram $D$ 
  we can define partial diagrams $D_-$ and $D_+$ as follows: 
  first, we resolve each crossing on the axis by cutting it open 
  according to $\aocr \mapsto \avcr$ or $\aucr \mapsto \avcr$.
  The result is a connected sum, which can then
  be split by a final cut $\ahcr \mapsto \avcr$.
  We thus obtain two disjoint diagrams: 
  $D_-$ in the halfspace $H_- = \{ (x,y) \mid x<0 \}$,
  and $D_+$ in the halfspace $H_+ = \{ (x,y) \mid x>0 \}$.
  The knots $K_-$ and $K_+$ represented by $D_-$ and $D_+$,
  respectively, are called the \emph{partial knots} of $D$.
\end{definition}

\begin{proposition}
  For every union diagram $D$ the partial knots $K_-$ and $K_+$ 
  are invariant under symmetric Reidemeister moves.
\end{proposition}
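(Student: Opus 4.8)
The plan is to show that a single symmetric Reidemeister move carrying a symmetric union knot diagram $D$ to $D'$ induces on each partial diagram $D_\pm$ a finite sequence of ordinary (asymmetric) Reidemeister moves, so that the partial knots $K_\pm$ are unchanged. Since symmetric equivalence is generated by the moves listed in \sref{sub:SymmetricMoves}, this suffices, and the whole argument reduces to a local case analysis over those moves.

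First I would reinterpret the cutting procedure so that it interacts well with local modifications. I would describe $D_+$ as the restriction $D \cap \overline{H_+}$ together with \emph{caps}: each point where a strand meets the axis, whether a perpendicular traversal $\ahcr \mapsto \avcr$ or a resolved crossing $\aocr \mapsto \avcr$ resp.\ $\aucr \mapsto \avcr$, is closed off on the $+$ side by the vertical smoothing $\avcr$, which joins the two neighbouring strand-ends by a turnback into $H_+$. The diagram $D_-$ is described symmetrically in $H_-$. The essential feature of this description is that the caps depend only on the diagram in an arbitrarily small neighbourhood of the axis, and that the resolution sends both crossing types to the same pattern $\avcr$, so the caps are canonical and in particular insensitive to the signs of the axis crossings.

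With this description in hand, moves off the axis are immediate. Such a move is supported in a disk disjoint from the axis, lying entirely in $H_+$ together with its mirror copy in $H_-$. Cutting leaves the caps untouched, and the move restricts to the identical ordinary Reidemeister move on $D_+$, respectively its mirror image on $D_-$. Hence $K_\pm$ are preserved, and the content of the proof lies entirely in the moves on the axis.

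Each move on the axis is supported in a box $B$ meeting the axis in a segment, so it suffices to compare the capped half-pictures $D_+ \cap B$ and $D'_+ \cap B$ with fixed boundary on $\partial B \cap \overline{H_+}$. For the elementary moves S1, S2h, S2v and the S3 variants one checks directly that, after resolving the axis crossings to $\avcr$ and inserting the caps, the two half-pictures either coincide or differ by a single ordinary R1, R2, or R3 move. I expect the main obstacle to be the generalized moves S2$\pm$ and S4, where two crossings sit on the axis: after resolution each on-axis crossing becomes a turnback, and one must verify that creating or annihilating the pair (for S2$\pm$) and sliding one turnback past the other (for S4) is realized on the $+$ side by an isotopy or by an ordinary R1/R2 move. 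Carrying out these half-picture reductions, together with the mirror-image check on the $-$ side, completes the case analysis and establishes the invariance of $K_\pm$. Note that this argument applies only to the symmetric Reidemeister moves: the genuinely more general flype moves of \fref{fig:FlypeAlongAxis} are deliberately excluded, since they need not preserve the partial knots.
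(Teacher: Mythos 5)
Your proof is correct and takes essentially the same approach as the paper, whose own proof is a one-line appeal to a ``straightforward case-by-case verification'' over the generating symmetric moves; your cap/turnback reformulation of the cutting procedure and the explicit case analysis (off-axis moves restricting to ordinary R-moves, on-axis moves reducing to isotopies or single R-moves of the half-pictures) simply flesh out that verification in more detail than the paper provides. Your closing caveat excluding flypes is also consistent with the paper's Remark \ref{rem:FlypePartialKnots}, which notes precisely that flypes along the axis can change the partial knots.
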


\begin{proof}
  This is easily seen by a straightforward case-by-case verification.
\end{proof}

\begin{remark} \label{rem:FlypePartialKnots}
  Notice that the partial knots are in general not invariant 
  under flypes along the axis, depicted in \fref{fig:FlypeAlongAxis}.
  Such moves can change the partial knots from $K_- \sharp L_-$ 
  and $K_+ \sharp L_+$ to $K_- \sharp L_+$ and $K_+ \sharp L_-$.
\end{remark}

\begin{remark}
  The above construction can be used to define the notion
  of \emph{partial link} for symmetric diagrams that have 
  components of type (b) and (c), and at most one component of type (a). 
  If there are two or more components of type (a), 
  then there does not seem to be a natural notion of partial knot or link.
  (A \emph{partial tangle} can, however, be defined as above,
  up to a certain equivalence relation induced by braiding the ends;
  we will not make use of this generalization in the present article.)
\end{remark}

\subsection{Reidemeister moves respecting the axis} \label{sub:AsymmetricMoves}

As an unintentional side-effect, most of our arguments 
will work also for \emph{asymmetric} diagrams.
Our construction of the bracket polynomial 
in \sref{sec:BracketConstruction} even \emph{requires} 
asymmetric diagrams in intermediate computations,
because the resolution of crossings breaks the symmetry.
Before stating the construction and the invariance theorem 
for our bracket polynomial, we thus make the underlying 
diagrams and their Reidemeister moves explicit.

As before we equip the plane $\R^2$ with the axis $\{0\} \times \R$,
but unlike the symmetric case, the reflection $\rho$ will play no r\^ole here.
We consider link diagrams that are transverse to the axis, that is, 
wherever a strand intersects the axis it does so transversally.
For such a diagram we can then distinguish crossings 
\emph{on} the axis and crossings \emph{off} the axis.
% We shall use the usual Reidemeister moves with the only
% constraint that a crossing cannot be pushed onto nor off the axis:

\begin{definition}
  We denote by $\udiagrams$ the set of planar link diagrams 
  that are transverse to the axis $\{0\} \times \R$,
  but not necessarily symmetric.  We do not distinguish 
  between diagrams that differ by an orientation-preserving 
  diffeomorphism $h \colon \R^2 \isoto \R^2$ fixing the axis setwise.
  A \emph{Reidemeister move respecting the axis}
  is a move of the following type:
  \begin{itemize}
  \item
    A Reidemeister move (R1, R2, R3) off the axis as depicted in \fref{fig:Rmoves}.
    % carried out independently of its mirror-symmetric counterpart. 
  \item
    A Reidemeister move (S1, S2, S3, S4) on the axis, 
    as depicted in \fref{fig:Smoves}.
  \end{itemize}
\end{definition}

The advantage of this formulation is that it applies to all diagrams,
symmetric or not.  For symmetric diagrams, both notions of equivalence co\"incide:

\begin{proposition}
  Two symmetric diagrams are equivalent under symmetric Reidemeister moves
  if and only if they are equivalent under Reidemeister moves respecting the axis.
\end{proposition}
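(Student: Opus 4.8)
The plan is to treat the two implications separately. The forward implication is immediate, so I would dispose of it first: every symmetric Reidemeister move can be rewritten through moves respecting the axis. A symmetric move off the axis is, by definition, an ordinary $R$-move performed in some region $U$ together with its mirror image in $\rho(U)$; since $U$ and $\rho(U)$ lie in opposite open half-planes $H_+ = \{x>0\}$ and $H_- = \{x<0\}$ and are therefore disjoint, this is simply the composite of two moves respecting the axis (\fref{fig:Rmoves}). A symmetric move on the axis is one of S1--S4 and is already a move respecting the axis (\fref{fig:Smoves}). Hence symmetric equivalence implies equivalence under moves respecting the axis, and all the content lies in the converse.

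For the converse I would exploit a decomposition into three regions. Suppose the symmetric diagrams $D_0$ and $D_1$ are joined by a sequence of moves respecting the axis, $D_0 = E_0 \to E_1 \to \dots \to E_n = D_1$, whose intermediate diagrams $E_i$ need \emph{not} be symmetric. After a generic perturbation I would arrange that each elementary move is supported in exactly one of three pairwise disjoint regions: the open right half-plane $H_+$, the open left half-plane $H_-$, or a thin symmetric neighbourhood $N$ of the axis. A move off the axis has connected support disjoint from the axis, so it lies entirely in $H_+$ (a \emph{right move}) or in $H_-$ (a \emph{left move}), while a move on the axis is supported in $N$. The upshot is that the three pieces of diagram data -- the part in $H_+$, the part in $H_-$, and the part in $N$ -- evolve independently along the sequence. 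I would also record that the axis moves S1--S4, being drawn symmetrically, carry a symmetric diagram to a symmetric diagram and are genuine symmetric moves.

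I would then symmetrize the sequence by the following recipe: replace each right move $r$ by the symmetric pair $(r,\rho(r))$, which is a single symmetric move off the axis; keep each axis move unchanged; and delete every left move. The claim is that the new sequence runs through symmetric diagrams and still joins $D_0$ to $D_1$. To see this I would maintain the invariant that the running symmetric diagram $\tilde E_i$ has the \emph{same} $H_+$-part and the same $N$-part as $E_i$ (its $H_-$-part being forced as the $\rho$-mirror of its $H_+$-part). Each prescribed move is then applicable: a right move acts on the unchanged $H_+$-part, an axis move acts on the unchanged $N$-part, and a deleted left move is harmless precisely because the invariant controls only the $H_+$- and $N$-parts, which a left move does not touch. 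At the end $\tilde E_n$ shares its $H_+$- and $N$-parts with $D_1$, and since both $\tilde E_n$ and $D_1$ are symmetric their $H_-$-parts agree as well; hence $\tilde E_n = D_1$, exhibiting $D_0$ and $D_1$ as symmetrically equivalent. The reason no explicit left moves are needed is exactly the symmetry of both endpoints: the cumulative effect of the original left moves on the $H_-$-part is reconciled automatically by the mirrored right moves together with $\rho(\mathrm{RH}(D_1)) = \mathrm{LH}(D_1)$.

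The hard part will be the genericity statement underpinning the decomposition: that every elementary move can be isotoped to have support in exactly one of $H_+$, $H_-$, $N$, so that the three pieces of data genuinely evolve independently, together with the formal (if intuitively clear) verification that each of S1--S4 preserves symmetry when applied on the axis. Once this bookkeeping is in place, the symmetrization recipe and its invariant finish the proof; alternatively, one could run the same reconciliation at the level of the symmetric isotopy correspondence of \sref{sub:SymmetricReidemeisterTheorem}, where the analogous difficulty reappears as the need to make a transverse isotopy between symmetric links equivariant.
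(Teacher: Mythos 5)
Your proposal is correct and follows essentially the same route as the paper's own proof: the forward direction by splitting each symmetric move into two asymmetric ones, and the converse by the identical symmetrization recipe (keep the axis moves, mirror the right-hand moves, discard the left-hand moves, and use the symmetry of both endpoint diagrams to see that the left halves reconcile at the end). Your three-region bookkeeping and explicit invariant just make precise the locality argument that the paper states tersely as ``the isotopy types of the tangles left and right of the axis remain mutually mirror-symmetric, since S-moves preserve this symmetry.''
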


\begin{proof}
  ``$\Rightarrow$'' 
  Each symmetric R-move is the composition of two asymmetric R-moves.

  ``$\Leftarrow$''
  Suppose that we can transform a symmetric diagram $D$ into 
  another symmetric diagram $D'$ by a sequence of R-moves and S-moves. 
  Since R-moves may be carried out asymmetrically,
  the symmetry of intermediate diagrams is lost.
  Nevertheless, the isotopy types of the tangles left and right 
  of the axis remain mutually mirror-symmetric, since S-moves
  preserve this symmetry.  We can thus forget the 
  given R-moves on the left-hand side of the axis, say.
  Each time we carry out an R-move on the right-hand side,
  we simultaneously perform its mirror image on the left-hand side.
  This defines a symmetric equivalence from $D$ to $D'$.
\end{proof}

\begin{remark}
  As before we can define the partial diagrams $D_-$ and $D_+$
  of a diagram $D$, provided that $D$ perpendicularly traverses
  the axis in either two points or no points at all.  The partial links
  $L_-$ and $L_+$ are invariant under Reidemeister moves respecting the axis.
\end{remark}

%%%%%%%%%%%%%%%%%%%%%%%%%%%%%%%%%%%%%%%%%%%%%%%%%%%%%%%%%%%%%%%%%%%%%%%%%%%%%

\section{Constructing the two-variable $W$-polynomial} \label{sec:BracketConstruction}

\subsection{Constructing the two-variable bracket polynomial} \label{sub:BracketConstruction}

We consider the set $\udiagrams$ of unoriented planar link diagrams that 
are transverse to the axis $\{0\} \times \R$ but not necessarily symmetric.
We can then define the bracket $\bracket{\cdot} \colon \udiagrams \to \Z(A,B)$
as in Definition \ref{def:TwoVariableBracket}.

\begin{lemma} \label{lem:BracketInvariance}
  The polynomial $\bracket{D}$ associated to a link diagram $D$ is invariant under 
  R2- and R3-moves off the axis as well as S2-, S3-, and S4-moves on the axis.
  It is not invariant under R1- nor S1-moves, but its behaviour is well-controlled: 
  we have
  \begin{gather}
    \label{eq:writhe}
    \renewcommand{\pic}[1]{\bracket{\raisebox{-1.4ex}{\includegraphics[height=4ex]{#1}}}}
    \pic{writhe+} = (-A^3) \pic{writhe0} 
    \qquad \text{and} \qquad
    \pic{writhe-} = (-A^{-3}) \pic{writhe0} ,
    \\
    \renewcommand{\pic}[1]{\bracket{\raisebox{-1.6ex}{\includegraphics[height=4.3ex]{#1}}}}
    \pic{writhe+axis}  = (-B^3) \pic{writhe0axis}
    \qquad \text{and} \qquad
    \pic{writhe-axis} = (-B^{-3}) \pic{writhe0axis} .
  \end{gather}
\end{lemma}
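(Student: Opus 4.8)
The plan is to verify invariance move-by-move, following the standard Kauffman-bracket argument but carefully tracking the new on-axis skein relation \eqref{eq:SkeinB} and the modified circle evaluation \eqref{eq:SkeinC}. First I would treat the moves off the axis exactly as Kauffman does: for R2 and R3 one resolves the crossings using \eqref{eq:SkeinA}, and the algebraic cancellations that establish invariance in the classical case go through verbatim, since these moves and their resolutions take place in a region disjoint from the axis. The only point that needs checking is that the circle-evaluation factor is consistent: in the off-axis R2 calculation one produces a free circle that does not meet the axis, and such a circle must contribute the factor $(-A^2-A^{-2})$; this is precisely what \eqref{eq:SkeinC} gives, since adding a circle with no axis intersections increases $n$ by $1$ while leaving $m$ fixed. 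Likewise R1 produces the factor $-A^{\pm3}$ as in \eqref{eq:writhe}, by the usual one-line computation.

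Next I would handle the on-axis moves. The S1-move is the on-axis analogue of R1: resolving the single on-axis crossing by \eqref{eq:SkeinB} produces one term with a small circle meeting the axis in two points (so $m$ increases by $1$, $n$ increases by $1$) and one term without. The circle that is created here straddles the axis, so by \eqref{eq:SkeinC} it contributes the factor $(-B^2-B^{-2})$ rather than $(-A^2-A^{-2})$; combining this with the $B^{\pm1}$ coefficients from \eqref{eq:SkeinB} yields the factor $-B^{\pm3}$ claimed in \eqref{eq:writhe}. For S2 I expect three sub-cases according to the relative position of the two strands and the axis (the moves S2h, S2v, and S2$\pm$), and in each I would resolve both crossings by the appropriate relation — \eqref{eq:SkeinA} for an off-axis crossing, \eqref{eq:SkeinB} for an on-axis crossing — and check that the four resulting terms recombine, the cross terms cancelling and the surviving circle being absorbed by the correct evaluation factor. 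The S3-moves I would treat as in the classical case, reducing R3/S3-invariance to R2/S2-invariance by resolving one chosen crossing and comparing the two sides.

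The genuinely new verification is the S4-move, in which two on-axis crossings exchange their order along the axis. Here I would resolve both on-axis crossings using \eqref{eq:SkeinB}, obtaining a sum of four states on each side of the move; the key is that each of the four smoothing patterns produces the \emph{same} underlying planar tangle on both sides, with matching $B^{\pm1}$ coefficients and matching numbers $n$ and $m$, so that \eqref{eq:SkeinC} assigns identical evaluations term by term. The bookkeeping must account for the two variants of \eqref{eq:SkeinB} (the over- and under-crossing cases) that occur in the four flavours of the S4-move.

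The main obstacle will be precisely this S4 calculation together with the correct tracking of the axis-intersection count $m$ throughout: because resolving an on-axis crossing can merge or split the strands meeting the axis, one must confirm in every case that the change in $m$ exactly matches the change in $n$ dictated by the circle being created or destroyed, so that the ratio-form of \eqref{eq:SkeinC} behaves consistently. Once the ledger of $(n,m)$ changes is set up correctly, each move reduces to a short and essentially mechanical verification; the conceptual content lies entirely in choosing the evaluation \eqref{eq:SkeinC} so that on-axis circles carry the $B$-factor and off-axis circles carry the $A$-factor, which is what makes all the cancellations balance.
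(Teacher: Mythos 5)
Your overall strategy is the same as the paper's: a move-by-move verification using \eqref{eq:SkeinA} and \eqref{eq:SkeinB}, with off-axis circles contributing $(-A^2-A^{-2})$, on-axis circles contributing $(-B^2-B^{-2})$, and the off-axis moves handled exactly as in Kauffman's classical argument. Your treatment of R1, R2, R3, S1 and the three S2-variants matches the paper's proof in substance. However, your S4 step contains a genuine error. The local pictures of the S4-move do not consist of the two on-axis crossings alone: since the two crossings exchange their order along the axis while the boundary of the local disk is fixed, the strands belonging to one crossing necessarily cross the strands belonging to the other \emph{off} the axis. Consequently, after you resolve both on-axis crossings via \eqref{eq:SkeinB}, the four resulting states are \emph{not} crossingless tangles, so \eqref{eq:SkeinC} cannot be applied to them directly; moreover the corresponding states on the two sides of the move are not the same planar tangle, so your claim of ``identical evaluations term by term'' is unjustified. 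The states on the two sides differ by R2-moves (and, if one resolves only one of the two crossings, also by S3-moves); this is precisely why the paper's proof resolves a single on-axis crossing and then invokes the previously established S3- and R2-invariance to identify the two summands. Your argument is repairable within your own framework --- expand both crossings and then match states using R2-invariance rather than planar isotopy --- but as written the step would fail.

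A second, smaller inaccuracy is your closing ``ledger'' principle, that one must check that ``the change in $m$ exactly matches the change in $n$ dictated by the circle being created or destroyed.'' For the S2h-move the situation is exactly the opposite, and that is the heart of the construction: the two extreme resolutions (coefficients $A^{+2}$ and $A^{-2}$) reroute the strands so that $m$ increases by one while $n$ is unchanged and no circle appears, which by \eqref{eq:SkeinC} produces the ratio factor $\frac{B^2+B^{-2}}{A^2+A^{-2}}$; this ratio is then what cancels against the genuine on-axis circle, with factor $(-B^2-B^{-2})$, occurring in one of the mixed resolutions. So the mechanism that makes the variables $A$ and $B$ compatible is a controlled \emph{mismatch} between the changes of $n$ and $m$, not their agreement; your formulation would lead you to expect a cancellation pattern that does not occur in this case.
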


\begin{proof}
  The proof consists of a case-by-case verification of 
  the stated Reidemeister moves.  It parallels Kauffman's 
  proof for his bracket polynomial, and is only 
  somewhat complicated here by a greater number of moves.

  Let us begin by noting two consequences of the circle evaluation formula \eqref{eq:SkeinC}:
  \begin{itemize}
  \item
    A circle \emph{off} the axis contributes a factor $(-A^2-A^{-2})$. 
    % because $n$ increases by one while $m$ remains unchanged.
  \item
    A circle \emph{on} the axis contributes a factor $(-B^2-B^{-2})$.
    % because both $n$ and $m$ increase each by one.
  \end{itemize}

  As a consequence, for Reidemeister moves of type R1$(+)$ we find 
  \renewcommand{\pic}[1]{\bracket{\raisebox{-1.4ex}{\includegraphics[height=4ex]{#1}}}}
  \begin{equation}
    \label{eq:r1move}
    \pic{r1inv1}
    = A\pic{r1invA} + A^{-1}\pic{r1invB} 
    = -A^3\pic{r1inv2} .
  \end{equation}
  The two summands contribute a factor $A(-A^2-A^{-2}) + A^{-1} = -A^3$, as claimed.
  The same calculation works for R1$(-)$, leading to a factor $-A^{-3}$.
  For S1-moves the calculation applies verbatim, replacing $A$ by $B$:
  \renewcommand{\pic}[1]{\bracket{\raisebox{-1.4ex}{\includegraphics[height=4.2ex]{#1}}}}
  \begin{equation}
    \label{eq:s1move}
    \pic{s1inv1}
    = B\pic{s1invA} + B^{-1}\pic{s1invB} 
    = -B^3\pic{s1inv2} .
  \end{equation}

  Invariance under R2-moves is proven as usual, via the skein relation \eqref{eq:SkeinA}:
  \renewcommand{\pic}[1]{\bracket{\raisebox{-0.9ex}{\includegraphics[height=3ex]{#1}}}}
  \begin{equation}
    \label{eq:r2move}
    \pic{r2inv1}
    = A^{+2}\pic{r2invAA} + A^{-2}\pic{r2invBB} + \pic{r2invBA} + \pic{r2invAB} 
    = \pic{r2inv2}
  \end{equation}
  Here the first two summands cancel with the third, because 
  a circle \emph{off} the axis contributes a factor $(-A^2-A^{-2})$.

  Analogously, invariance under S2v-moves is proven via the skein relation \eqref{eq:SkeinB}:
  \renewcommand{\pic}[1]{\bracket{\raisebox{-3ex}{\includegraphics[width=3ex]{#1}}}}
  \begin{equation}
    \label{eq:s2vmove}
    \pic{s2vinv1}
    = B^{+2}\pic{s2vinvAA} + B^{-2}\pic{s2vinvBB} + \pic{s2vinvBA} + \pic{s2vinvAB}
    = \pic{s2vinv2}
  \end{equation}
  Here the first two summands cancel with the third, because 
  a circle \emph{on} the axis contributes a factor $(-B^2-B^{-2})$.

  Invariance under S2h-moves is proven as follows:
  \renewcommand{\pic}[1]{\bracket{\raisebox{-1.5ex}{\includegraphics[height=4.2ex]{#1}}}}
  \begin{equation}
    \label{eq:s2hmove}
    \pic{s2hinv1}
    = A^{+2}\pic{s2hinvAA} + A^{-2}\pic{s2hinvBB} + \pic{s2hinvBA} + \pic{s2hinvAB}
    = \pic{s2hinv2}
  \end{equation}
  Here the first two summands cancel with the third,
  thanks to the judicious coupling of the variables $A$ and $B$, 
  as formulated in the circle evaluation \eqref{eq:SkeinC}:
  \begin{equation}
    \pic{s2hinvAA} = \pic{s2hinvBB} = \frac{B^2+B^{-2}}{A^2+A^{-2}} \pic{s2hreduced}
  \end{equation}

  Invariance under the remaining moves will now be an easy consequence.  
  To begin with, S2h-invariance implies invariance under 
  the slightly more complicated move S2$\pm$:
  \renewcommand{\pic}[1]{\bracket{\raisebox{-1.5ex}{\includegraphics[height=4.2ex]{#1}}}}
  \begin{align}
    \label{eq:s2+move}
    \pic{s2+inv1}
    & = B\pic{s2+inv1A} + B^{-1}\pic{s2+inv1B} 
    \\ \notag
    & = B\pic{s2+inv2A} + B^{-1}\pic{s2+inv2B} 
    = \pic{s2+inv2}
  \end{align}
  Here the two $B$-summands are equal using S2h-invariance.
  For the $B^{-1}$-summand we carry out two opposite R1-moves, 
  so the factors $(-A^3)$ and $(-A^{-3})$ cancel each other.

  Invariance under R3-moves is proven as usual, via the skein relation \eqref{eq:SkeinA}:
  \renewcommand{\pic}[1]{\bracket{\raisebox{-2.2ex}{\includegraphics[height=5.6ex]{#1}}}}
  \begin{align}
    \label{eq:r3move}
    \pic{r3inv1}
    & = A\pic{r3inv1A} + A^{-1}\pic{r3inv1B}
    \\ \notag
    & = A\pic{r3inv2A} + A^{-1}\pic{r3inv2B}
    = \pic{r3inv2}
  \end{align}
  Here the middle equality follows from R2-invariance, established above.
  Notice also that this R3-move comes in another variant: 
  if the middle crossing is changed to its opposite,
  then the coefficients $A$ and $A^{-1}$ are exchanged,
  and the desired equality is again verified.

  Analogously, invariance under S3-moves is proven
  via the skein relation \eqref{eq:SkeinB}:
  \begin{align}
    \label{eq:s3move}
    \pic{s3inv1}
    & = B\pic{s3inv1A} + B^{-1}\pic{s3inv1B}
    \\ \notag
    & = B\pic{s3inv2A} + B^{-1}\pic{s3inv2B}
    = \pic{s3inv2}
  \end{align}
  Here the middle equality follows from S2h-invariance, established above.  
  This proves invariance under any R2v-move in the variant (o+).
  For the variant (o-) the middle crossing is changed to its opposite: 
  in the preceding equation the coefficients $B$ and $B^{-1}$ 
  are exchanged, and the desired equality is still verified.
  For the variants (u+) and (u-) the horizontal strand passes under 
  the two other strands, and the same argument still holds.

  Finally, invariance under S4-moves is again proven via the skein relation \eqref{eq:SkeinB}:
  \renewcommand{\pic}[1]{\bracket{\raisebox{-2.2ex}{\includegraphics[height=5.6ex]{#1}}}}
  \begin{align}
    \label{eq:s4move}
    \pic{s4inv1}
    & = B\pic{s4inv1A} + B^{-1}\pic{s4inv1B}
    \\  \notag
    & = B\pic{s4inv2A} + B^{-1}\pic{s4inv2B}
    = \pic{s4inv2}
  \end{align}
  The middle equality follows from S3- and R2-invariance, established above.  
  There are three more variants of S4-moves, obtained by changing 
  one or both of the middle crossings to their opposite.
  In each case the desired equality can be verified in the same way.
\end{proof}

\subsection{Normalizing with respect to the writhe} \label{sub:Normalization}

Given an oriented link diagram $D$, we can associate a sign to each crossing, 
according to the convention $\pcr \mapsto +1$ and $\ncr \mapsto -1$.
Let $\alpha(D)$ be the sum of crossing signs off the axis (called $A$-writhe),
and let $\beta(D)$ be the sum of crossing signs on the axis (called $B$-writhe).

\begin{figure}[hbtp]
  \centering
  \includegraphics{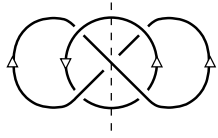}
  \caption{A diagram $D$ with $\alpha(D)=4$ and $\beta(D)=-1$}
  \label{fig:writhe}
\end{figure} 

\begin{definition}
  We define the normalized polynomial $W \colon \odiagrams \to \Z(A,B)$ to be
  \[
  W(D) := \bracket{D} \cdot (-A^3)^{-\alpha(D)} \cdot (-B^3)^{-\beta(D)}.
  \]
  This is called the \emph{$W$-polynomial} of the diagram $D$
  with respect to the given axis. 
\end{definition}

\begin{theorem}
  $W(D)$ is invariant under Reidemeister moves respecting the axis.
\end{theorem}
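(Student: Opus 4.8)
The plan is to show that the writhe-normalization exactly cancels the non-invariance of the bracket under R1- and S1-moves, while leaving invariance under all other moves untouched. Everything rests on Lemma~\ref{lem:BracketInvariance}, which already records the complete behaviour of $\bracket{D}$ under every Reidemeister move respecting the axis. So the proof is purely bookkeeping: I track how $\bracket{D}$, $\alpha(D)$, and $\beta(D)$ each change under a given move, and check that the three factors in $W(D) = \bracket{D}\cdot(-A^3)^{-\alpha(D)}\cdot(-B^3)^{-\beta(D)}$ conspire to leave the product unchanged.

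First I would handle the moves under which the bracket is already invariant, namely R2, R3 off the axis and S2, S3, S4 on the axis. For these I must check that the writhes $\alpha$ and $\beta$ are themselves unchanged. The standard orientation argument applies: in an R2-move the two crossings created (or destroyed) carry opposite signs and both lie off the axis, so $\alpha$ is unaffected and $\beta$ is untouched; an R3-move merely slides a strand across a crossing and preserves every crossing sign. The same reasoning, applied to crossings on the axis, shows that S2h, S2v, S2$\pm$, S3, and S4 all preserve $\beta$ (and do not touch $\alpha$). Since $\bracket{D}$, $\alpha(D)$, and $\beta(D)$ are each individually invariant under these moves, so is $W(D)$.

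Next I turn to the moves that genuinely change the bracket, the R1- and S1-moves. For an R1$(+)$-move, Lemma~\ref{lem:BracketInvariance} gives a factor $(-A^3)$ in the bracket, and the new positive kink off the axis increases $\alpha(D)$ by $1$, so the compensating factor $(-A^3)^{-\alpha(D)}$ acquires an extra $(-A^3)^{-1}$; the two exactly cancel, and $\beta$ is unchanged. An R1$(-)$-move gives a bracket factor $(-A^{-3})$ against a decrease of $\alpha$ by $1$, again cancelling. The identical computation with $B$ in place of $A$, and $\beta$ in place of $\alpha$, disposes of the S1$(\pm)$-moves using the second line of~\eqref{eq:writhe}. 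This is the crux of the argument, and it is precisely the reason the normalization factors were chosen in this form.

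The only point requiring genuine care—the step I expect to be the main obstacle—is the sign convention tying each R1/S1 kink to the correct writhe increment, since the bracket behaviour in~\eqref{eq:writhe} is stated for unoriented diagrams while $\alpha$ and $\beta$ are defined via crossing signs of oriented diagrams. I would verify directly that a positive kink (contributing $-A^3$ resp.\ $-B^3$ to the bracket) indeed has crossing sign $+1$ independently of how the strand is oriented, so that the writhe increment is well-defined and matches the bracket factor. Once this compatibility is confirmed, the cancellations above are unambiguous, and combining the two cases establishes invariance of $W(D)$ under every Reidemeister move respecting the axis.
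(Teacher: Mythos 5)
Your proposal is correct and follows essentially the same route as the paper's proof: invoke Lemma~\ref{lem:BracketInvariance} for the bracket's behaviour, observe that $\alpha$ and $\beta$ are unchanged under regular moves, and check that the writhe change under R1/S1 exactly cancels the bracket factor. Your extra verification that a kink's crossing sign is independent of orientation is a sound (if implicit in the paper) detail, not a deviation in approach.
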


\begin{proof}
  The $A$-writhe $\alpha(D)$ does not change under regular Reidemeister moves.
  Since $\bracket{D}$ is also invariant under such moves, so is $W(D)$.
  \renewcommand{\pic}[1]{\raisebox{-0.5ex}{\includegraphics[height=2ex]{#1}}}
  An R1-move from $D = \pic{writhe+}$ to $D' = \pic{writhe0}$ 
  changes the $A$-writhe to $\alpha(D') = \alpha(D)-1$, 
  so that the factors in $W$ compensate according 
  to Lemma \ref{lem:BracketInvariance}.
  The same argument holds for S1-moves and the $B$-writhe.
\end{proof}

\begin{remark} \label{rem:SymmetricLinking}
  Consider a symmetric diagram $D$.  At first sight one would expect $\alpha(D) = 0$,
  so that no normalization has to be carried out for the variable $A$.
  Indeed, in almost all cases crossing signs cancel each other in symmetric pairs,
  but this fails where components of type (a) cross components 
  of type (b) or (c): according to Remark \ref{rem:ThreeComponentTypes}
  the reflection $\rho$ reverses the orientation of the former, 
  but preserves the orientation of the latter.  The signs in such 
  a symmetric pair of crossings are thus not opposite but identical.
  The simplest example of this kind is displayed in \fref{fig:writhe}, 
  showing in particular that $\alpha(D)$ can be non-zero.
\end{remark}

\subsection{Generalization to arbitrary surfaces} \label{sub:Generalization}

Our invariance arguments are local in nature, and thus 
immediately extend to any oriented connected surface $\Sigma$
equipped with a reflection, that is, an orientation-reversing
diffeomorphism $\rho \colon \Sigma \to \Sigma$ of order $2$.
Even though we do not have an immediate application for it,
this generalization seems natural and interesting enough
to warrant a brief sketch.
As before, we will call $\rho$ the \emph{reflection};
its fix-point set is a $1$-dimensional submanifold
which will be called the \emph{axis}.

\begin{figure}[hbtp]
  \centering
  \includegraphics[scale=0.8]{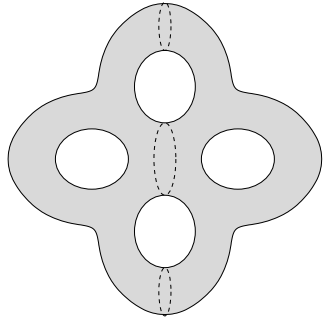}
  \caption{A surface $\Sigma$ with orientation-reversing involution $\rho$.
    The fixed axis is depicted as a dashed line.}
  \label{fig:Surface}
\end{figure} 

\begin{example}
  Such an object $(\Sigma,\rho)$ naturally arises for every complex manifold $\Sigma$ 
  of complex dimension $1$ (and real dimension $2$) equipped with a real structure, 
  that is, an antiholomorphic involution $\rho \colon \Sigma \to \Sigma$.    
  This includes the basic situation of the complex plane $\C$ or 
  the Riemann sphere $\CP^1$, with $\rho$ being complex conjugation.
  More generally, one can consider the zero-set $\Sigma \subset \C^2$ of a non-degenerate 
  real polynomial $P \in \R[z_1,z_2]$, or the zero-set $\Sigma \subset \CP^2$
  of a non-degenerate homogeneous polynomial $P \in \R[z_1,z_2,z_3]$,
  where the reflection $\rho$ is again given by complex conjugation.
\end{example}

\begin{remark}
  As in \sref{sub:SymmetricDiagrams}, a link diagram $D$ 
  on the surface $\Sigma$ is \emph{symmetric} if $\rho(D) = D$ 
  except for crossings on the axis, which are necessarily reversed.
  For symmetric diagrams we can consider symmetric Reidemeister 
  moves as in \sref{sub:SymmetricMoves} and establish a symmetric 
  Reidemeister theorem as in \sref{sub:SymmetricReidemeisterTheorem}.
  Partial tangles can be constructed as in \sref{sub:PartialKnots}
  and are again invariant; this is essentially a local property.
  In the absence of a convex structure, however, we cannot
  construct ribbon surfaces as in \sref{sub:SymmetricUnions}
  by joining opposite points.  More generally, a surface bounding $L$
  in $\Sigma\times\R$ exists if and only if the obvious obstruction 
  $[D] \in H_1(\Sigma)$ vanishes.
\end{remark}

\begin{remark}
  As before we can weaken the symmetry condition and
  consider only transverse diagrams under Reidemeister
  moves respecting the axis.  Here we assume a Morse function
  $h \colon \Sigma \to \R$ for which $0$ is a regular value,
  so that the axis $A = h^{-1}(0)$ decomposes $\Sigma$ into 
  two half-surfaces $\Sigma_- = \{ x \in \Sigma \mid h(x) < 0 \}$
  and $\Sigma_+ = \{ x \in \Sigma \mid h(x) > 0 \}$.
  
  We can then consider the set $\udiagrams(\Sigma)$ of 
  link diagrams on $\Sigma$ that are transverse to the axis.
  The skein relations \eqref{eq:SkeinA} and \eqref{eq:SkeinB}
  together with the circle evaluation formula \eqref{eq:SkeinC}
  define an invariant $\udiagrams(\Sigma) \to \Z(A,B)$ as before.
  This can be further refined in two ways.  Firstly,
  instead of one variable $B$ we can introduce separate
  variables $B_1,\dots,B_n$ for each connected component of the axis.
  Secondly, we can evaluate circles on the surface $\Sigma$ 
  according to their isotopy type.  
  The generalized construction essentially works as before.
\end{remark}

%%%%%%%%%%%%%%%%%%%%%%%%%%%%%%%%%%%%%%%%%%%%%%%%%%%%%%%%%%%%%%%%%%%%%%%%%%%%%

\section{General properties of the $W$-polynomial} \label{sec:GeneralProperties}

\subsection{Symmetries, connected sums, and mutations} \label{sub:Symmetries}

As before, we adopt the notation $A^2 = t^{-\onehalf}$ and $B^2 = s^{-\onehalf}$,
and instead of $W(D)$ we also write $W_D(s,t)$. % to emphasize the variables.

% We begin by stating some obvious symmetries:

\begin{proposition}
  $W_D$ is insensitive to reversing the orientation of all components of $D$.
\end{proposition}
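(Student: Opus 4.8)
The plan is to compare the three factors defining $W_D$ for the diagram $D$ and for the diagram $\bar D$ obtained by reversing the orientation of every component, and to check that each factor is unchanged. Writing $W(D) = \bracket{D}\cdot(-A^3)^{-\alpha(D)}\cdot(-B^3)^{-\beta(D)}$, the goal is simply $W(\bar D) = W(D)$.

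First I would dispose of the bracket factor. By Definition \ref{def:TwoVariableBracket} the unnormalized bracket is a map $\udiagrams \to \Z(A,B)$ on \emph{unoriented} diagrams, computed purely from the skein relations \eqref{eq:SkeinA}, \eqref{eq:SkeinB} and the circle evaluation \eqref{eq:SkeinC}, none of which refer to orientation. Since $D$ and $\bar D$ have the same underlying unoriented diagram, we get $\bracket{\bar D} = \bracket{D}$ at once.

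It then remains to control the two writhe exponents. Both $\alpha$ and $\beta$ are sums of crossing signs (off and on the axis, respectively) under the convention $\pcr \mapsto +1$, $\ncr \mapsto -1$. The only substantive point is that the sign of a single crossing is invariant under reversing the orientations of \emph{both} strands meeting there. This is the elementary linear-algebra fact that the sign is $\operatorname{sign}\det(v_{\mathrm{over}}, v_{\mathrm{under}})$ for the two oriented tangent vectors, and reversing both replaces this by $\det(-v_{\mathrm{over}}, -v_{\mathrm{under}}) = \det(v_{\mathrm{over}}, v_{\mathrm{under}})$. Passing from $D$ to $\bar D$ reverses both strands at \emph{every} crossing, so every individual sign is preserved; hence $\alpha(\bar D) = \alpha(D)$ and $\beta(\bar D) = \beta(D)$, and the normalization factors $(-A^3)^{-\alpha}$, $(-B^3)^{-\beta}$ are unchanged. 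Combining this with $\bracket{\bar D} = \bracket{D}$ gives $W(\bar D) = W(D)$.

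There is no real obstacle in this statement: the entire content is that a crossing sign sees only the relative orientation of the two strands, so a global reversal cancels. It is worth contrasting this with the much more delicate Proposition \ref{prop:GeneralProperties}(6), where one reverses only \emph{some} components; there the signs at mixed crossings do change, and invariance genuinely uses the special structure of symmetric union diagrams rather than this one-line cancellation.
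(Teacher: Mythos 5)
Your proof is correct and follows essentially the same route as the paper's: the bracket $\bracket{D}$ is defined on unoriented diagrams and so is unchanged, and each crossing sign is preserved because reversing \emph{both} strands at a crossing leaves its sign invariant, so $\alpha$ and $\beta$ (and hence the normalization factors) do not change. The paper's proof is just a terser statement of exactly this argument; your determinant justification and the contrast with the single-component-reversal case are fine elaborations but not a different method.
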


\begin{proof}
  The bracket polynomial is independent of orientations,
  and the writhe does not change either: crossing signs 
  are invariant if we change \emph{all} orientations.
\end{proof}

\begin{proposition}
  The $W$-polynomial enjoys the following properties:
  \begin{enumerate}
  \item
    $W_D$ is invariant under mutation, flypes, and rotation about the axis.
  \item
    If $D \sharp D'$ is a connected sum along the axis,
    then $W_{D \sharp D'} = W_D \cdot W_{D'}$.
  \item
    If $D^*$ is the mirror image of $D$, then $W_{D^*}(s,t) = W_D(s^{-1},t^{-1})$.
  \item
    If $D$ is symmetric, then $W_D(s,t)$ is symmetric in $t \leftrightarrow t^{-1}$.
    % that is $W_D(s,t^{-1}) = W_D(s,t)$ and thus $W_{D^*}(s,t) = W_D(s^{-1},t)$.
  \end{enumerate}
\end{proposition}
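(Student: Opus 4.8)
The plan is to read each assertion off the behaviour of the three skein relations \eqref{eq:SkeinA}--\eqref{eq:SkeinC} and of the two writhes $\alpha,\beta$ under the operation in question. I would begin with the mirror image (3), which sets the pattern. Passing from $D$ to $D^*$ exchanges over- and under-strands at every crossing; in \eqref{eq:SkeinA} this interchanges the coefficients $A$ and $A^{-1}$, in \eqref{eq:SkeinB} it interchanges $B$ and $B^{-1}$, while the circle evaluation \eqref{eq:SkeinC} depends only on $A^2+A^{-2}$ and $B^2+B^{-2}$ and is therefore unchanged. Hence $\bracket{D^*}(A,B)=\bracket{D}(A^{-1},B^{-1})$. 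Simultaneously every crossing sign reverses, so $\alpha(D^*)=-\alpha(D)$ and $\beta(D^*)=-\beta(D)$, and the normalising factors $(-A^{3})^{-\alpha}$, $(-B^{3})^{-\beta}$ invert accordingly. Combining, $W(D^*)(A,B)=W(D)(A^{-1},B^{-1})$; since $A^2=t^{-\onehalf}$ and $B^2=s^{-\onehalf}$, the substitution $A\mapsto A^{-1}$, $B\mapsto B^{-1}$ is exactly $t\mapsto t^{-1}$, $s\mapsto s^{-1}$, which is the claim $W_{D^*}(s,t)=W_{D}(s^{-1},t^{-1})$.

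For the $t$-symmetry (4) the tool is the planar reflection $\rho$. Reading the skein relations as before, reflection reverses the orientation of the plane — interchanging the two smoothings of every off-axis crossing ($A\mapsto A^{-1}$) — and reverses every on-axis crossing, interchanging the two lines of \eqref{eq:SkeinB} ($B\mapsto B^{-1}$); thus $\bracket{\rho(D)}(A,B)=\bracket{D}(A^{-1},B^{-1})$ for any diagram, and, since the writhe also reverses under $\rho$, the mirror computation yields $W_{\rho D}(s,t)=W_D(s^{-1},t^{-1})$. Now suppose $D$ is symmetric. By definition $\rho(D)$ is literally the diagram $\bar D$ obtained from $D$ by reversing its on-axis crossings, an operation touching only \eqref{eq:SkeinB}, so $\bracket{\bar D}(A,B)=\bracket{D}(A,B^{-1})$. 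Equating the two evaluations of the bracket of this one diagram gives $\bracket{D}(A^{-1},B^{-1})=\bracket{D}(A,B^{-1})$, i.e.\ $\bracket{D}(A^{-1},B)=\bracket{D}(A,B)$: the bracket of a symmetric diagram is invariant under $A\leftrightarrow A^{-1}$, which is $t\leftrightarrow t^{-1}$. Promoting this to $W_D$ requires controlling the factor $(-A^3)^{-\alpha}$ under $A\mapsto A^{-1}$; for a symmetric diagram the off-axis crossings fall into $\rho$-related pairs whose signs cancel, so that $\alpha=0$ and the $A$-normalisation is trivial — the one delicate point, recorded in Remark~\ref{rem:SymmetricLinking}, being the mixed crossings between components of types (a) and (b)/(c), which must be treated separately. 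Invariance under rotation about the axis (part of (1)) then comes for free: a half-turn in $\R^3$ about the axis is the composite $D\mapsto(\rho D)^*$, whence $W((\rho D)^*)(s,t)=W_{\rho D}(s^{-1},t^{-1})=W_D(s,t)$.

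For the product formula (2) I would expand both factors as states. After resolving every crossing via \eqref{eq:SkeinA}--\eqref{eq:SkeinB}, a state of $D\sharp D'$ is exactly a pair consisting of a state of $D$ and a state of $D'$, and the two coefficients multiply. It then remains to check that the circle evaluation \eqref{eq:SkeinC} is multiplicative for the resulting pictures. Since the connected sum is performed at a perpendicular axis-traversal common to $D$ and $D'$, splicing merges one circle of each state into a single circle and fuses the two traversals into one, so the circle count and axis count satisfy $n=n_D+n_{D'}-1$ and $m=m_D+m_{D'}-1$. Because \eqref{eq:SkeinC} depends only on $n-1$ and $m-1$, and both drop by one, we obtain $\bracket{D\sharp D'}=\bracket{D}\cdot\bracket{D'}$; as $\alpha$ and $\beta$ are plainly additive under connected sum, $W_{D\sharp D'}=W_D\cdot W_{D'}$.

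The remaining assertions of (1) — invariance under mutation and flypes — rest on the standard two-dimensional skein-module argument. The bracket of any $2$-string tangle is a $\Z(A,B)$-combination of the two crossingless tangles, and mutation and flypes are half-turns of a tangle about a coordinate axis that fix both crossingless tangles as unoriented pictures; hence they leave the bracket unchanged. For flypes along the axis one runs the identical argument inside an axis-respecting skein module, so that \eqref{eq:SkeinA} and \eqref{eq:SkeinB} are applied in their proper places. In every case the move is a homeomorphism permuting the crossings while preserving their signs and their on/off-axis type, so $\alpha$, $\beta$, and hence the normalisation are unchanged; any orientation mismatch produced by a half-turn is absorbed by the orientation-insensitivity already established. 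I expect the main obstacle to be the writhe bookkeeping in (4): one must pin down exactly how $\rho$ acts on an on-axis crossing and verify that the factors $(-A^3)^{\mp\alpha}$ recombine correctly. Unlike the bracket, which is symmetric in $A\leftrightarrow A^{-1}$ on the nose, the $A$-normalisation is sensitive to $\alpha$, so the symmetry genuinely relies on the vanishing (or controlled behaviour) of the off-axis writhe for symmetric diagrams, the subtle case being the mixed crossings of Remark~\ref{rem:SymmetricLinking}. The circle count in (2) is a secondary point deserving care, since one must confirm that both $n$ and $m$ decrease by exactly one under every splice.
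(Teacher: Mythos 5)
Your treatments of (1), (2) and (3) are sound, and they differ in flavour from the paper's proof, which disposes of all four claims in one line by induction on the number of crossings (crossingless diagrams are clear, and each property is propagated by the skein relations \eqref{eq:SkeinA}--\eqref{eq:SkeinC}). Your state-sum bookkeeping for the connected sum --- both the circle count $n=n_D+n_{D'}-1$ and the axis count $m=m_D+m_{D'}-1$ drop by one, so the evaluation \eqref{eq:SkeinC} is multiplicative --- makes explicit exactly what the paper's induction hides, and your derivation of rotation invariance as the composite of reflection and mirror image, $W_{(\rho D)^*}(s,t)=W_{\rho D}(s^{-1},t^{-1})=W_D(s,t)$, is a clean structural shortcut.

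The gap is in (4), and it sits exactly where you parked it. Your reduction is correct and elegant: comparing the two evaluations $\bracket{\rho(D)}(A,B)=\bracket{D}(A^{-1},B^{-1})$ and $\bracket{\bar D}(A,B)=\bracket{D}(A,B^{-1})$ of the single diagram $\rho(D)=\bar D$ gives $\bracket{D}(A^{-1},B)=\bracket{D}(A,B)$, so the \emph{bracket} of a symmetric diagram is always symmetric under $t\leftrightarrow t^{-1}$. But to pass to $W_D$ you assert that the off-axis crossings of a symmetric diagram fall into $\rho$-related pairs with cancelling signs, ``so that $\alpha=0$''. That assertion is precisely what Remark \ref{rem:SymmetricLinking} denies: when a component of type (a) crosses a component of type (b) or (c), the paired crossings have \emph{equal} signs, and Figure \ref{fig:writhe} exhibits a symmetric diagram with $\alpha(D)=4$. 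You acknowledge that this case ``must be treated separately'' but never treat it, and it cannot be absorbed afterwards: your own identities yield $W_D(s,t^{-1})=A^{6\alpha(D)}\,W_D(s,t)=t^{-3\alpha(D)/2}\,W_D(s,t)$, so the claimed symmetry holds exactly when $\alpha(D)=0$. Your argument therefore proves (4) for symmetric unions (where $\alpha(D)=0$ by Proposition \ref{prop:SymmetricCrossings}) and more generally for symmetric diagrams with vanishing $A$-writhe, but not for an arbitrary symmetric diagram as stated. (In fairness, the same normalization issue is silently present in the paper's one-line induction: the bracket symmetry propagates through the skein relations, but the factor $(-A^{-3})^{\alpha(D)}$ does not. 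You have located a genuine subtlety rather than merely skipped a step --- but a complete proof must either restrict to $\alpha(D)=0$ or address this discrepancy explicitly, and your proposal does neither.)
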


\begin{figure}[hbtp]
  \centering
  \subfigure[connected sum \label{subfig:ConnectedSum}]{\figbox{24ex}{28ex}{\includegraphics[scale=0.8]{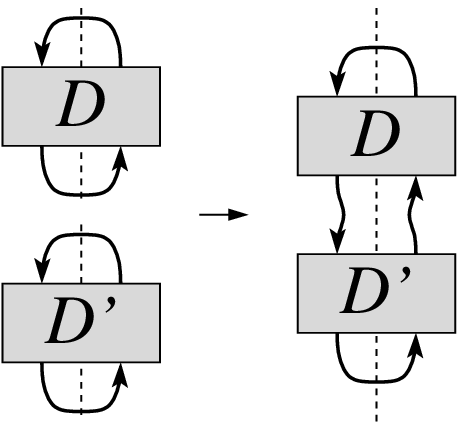}}}
  \qquad
  \subfigure[mutation \label{subfig:Mutation}]{\figbox{24ex}{28ex}{\includegraphics[scale=0.8]{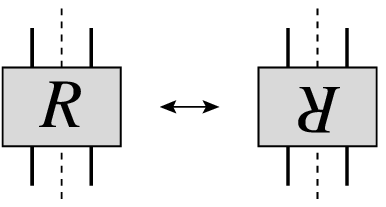}}} 
  \caption{Connected sum and mutation along the axis}
  \label{fig:ConnectedSumMutation}
\end{figure} 

\begin{proof}
  In each case the proof is by induction 
  on the number of crossings of $D$: 
  the assertion is clear when $D$ has no crossings
  and is propagated by the skein relations.
\end{proof}

Flypes and mutations along the axis are depicted in Figures 
\ref{fig:FlypeAlongAxis} and \ref{fig:ConnectedSumMutation}.
Such moves leave the $W$-polynomial invariant but
can change the partial knots, namely from $K_- \sharp L_-$ 
and $K_+ \sharp L_+$ to $K_- \sharp L_+$ and $K_+ \sharp L_-$.
For a discussion of connected sums see \cite{EisermannLamm:2007}:
there are different ways of forming a connected sum, 
but they are related by mutations.

\begin{remark}
  We point out the subtlety that there are two variants of mutation (\fref{fig:Mutations}): rotation and flipping. 
  (The combination of both variants yields a flip along a perpendicular axis and is not depicted here.)
  It is easy to see that both variants are equivalent: we can deduce a flip from 
  two rotations and some Reidemeister moves (\fref{fig:RotationInducesFlip}),
  and conversely a rotation from two flips (\fref{fig:FlipInducesRotation}).
  In our setting of diagrams with respect to a fixed axis, 
  this equivalence still holds for both variants of mutation on the axis,
  using Reidemeister moves and flypes respecting the axis.
  It is thus enough to consider the mutation depicted in \fref{subfig:Mutation}.
  The assertion of the theorem holds for mutations on and off the axis.
\end{remark}

\begin{figure}[hbtp]
  \centering
  \includegraphics[scale=0.8]{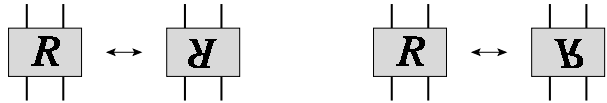}
  \caption{Two types of mutation: rotation and flipping}
  \label{fig:Mutations}
\end{figure} 

\begin{figure}[hbtp]
  \centering
  \includegraphics[scale=0.8]{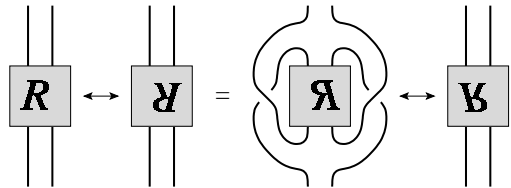}
  \caption{Deducing a flip from two rotations}
  \label{fig:RotationInducesFlip}
\end{figure} 

\begin{figure}[hbtp]
  \centering
  \includegraphics[scale=0.8]{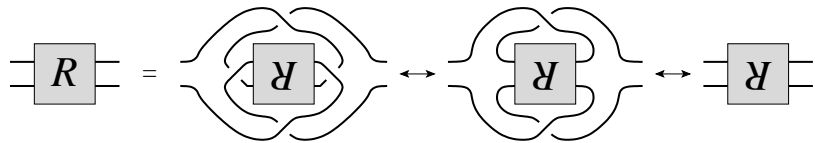}
  \caption{Deducing a rotation from two flips}
  \label{fig:FlipInducesRotation}
\end{figure}

\begin{example}
  The Kinoshita-Terasaka knot can be presented as a symmetric union 
  (with trivial partial knots) as in \fref{fig:KTC} on the left.  
  On the right you see a mutation, the Conway knot, 
  where the right half is flipped.  
  Both knots thus share the same $W$-polynomial.
\end{example}

\begin{figure}[hbtp]
  \includegraphics[height=28ex]{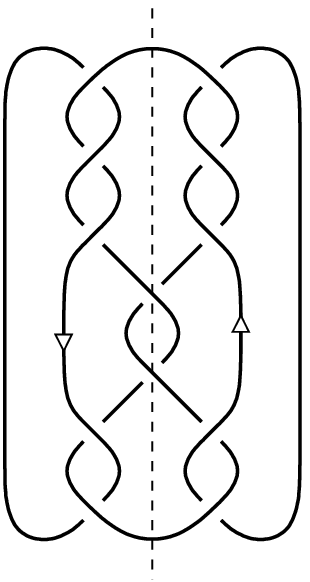}
  \qquad \qquad 
  \includegraphics[height=28ex]{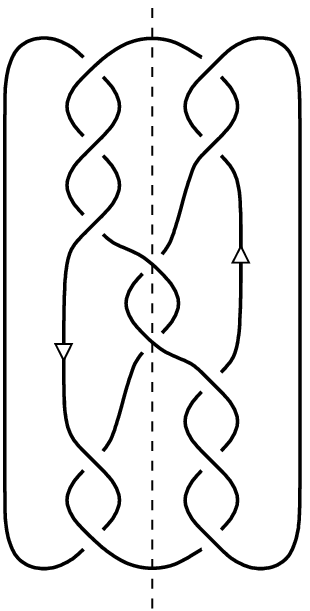}
  \caption{The Kinoshita-Terasaka knot (left) and the Conway knot
    (right) are mutations of one another.}
  \label{fig:KTC}
\end{figure}

\subsection{Oriented skein relations} \label{sub:OrientedSkeinRelations}

The following observation can be useful to simplify calculations,
by relating $W_D$ to the Jones polynomial in an important special case:

\begin{proposition} \label{prop:NoCrossingsOnAxis}
  Let $D$ be a diagram representing a link $L$.
  If $D$ has no crossings on the axis and perpendicularly 
  traverses the axis in $2m$ points, then 
  \[
  W_D(s,t) = \left(\frac{s^\onehalf+s^{-\onehalf}}{t^\onehalf+t^{-\onehalf}}\right)^{m-1} V_L(t) .
  \]
\end{proposition}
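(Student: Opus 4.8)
The plan is to reduce the two-variable bracket of $D$ to the ordinary one-variable Kauffman bracket by a state-sum comparison, and then to recognize the writhe-normalized Kauffman bracket as the Jones polynomial $V_L(t)$.

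First I would record the two simplifications forced by the hypothesis. Since $D$ has no crossings on the axis, the $B$-writhe vanishes, $\beta(D)=0$, so the factor $(-B^3)^{-\beta(D)}$ drops out and $W(D)=\bracket{D}\cdot(-A^3)^{-\alpha(D)}$; moreover $\alpha(D)$ is now the sum of \emph{all} crossing signs, i.e.\ the ordinary writhe $w(D)$. Crucially, skein relation \eqref{eq:SkeinA} is only ever applied to crossings off the axis, and neither smoothing of such a crossing changes how the diagram meets the axis. Hence, expanding $\bracket{D}$ over all crossings produces a state sum in which every crossingless state $C$ still meets the axis in exactly $2m$ points, the same number as $D$.

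The key step is then the state-by-state comparison. Write the expansion of $\bracket{D}$ as a sum over crossingless states $C$, each weighted by the usual product of $A^{\pm1}$ factors and the value $\bracket{C}$. If $C$ has $n_C$ circles, formula \eqref{eq:SkeinC} gives $\bracket{C}=(-A^2-A^{-2})^{n_C-m}(-B^2-B^{-2})^{m-1}$, whereas the corresponding term of the ordinary Kauffman bracket $\bracket{D}_{\mathrm K}$ (normalized so that the unknot is $1$) is $(-A^2-A^{-2})^{n_C-1}$. The ratio of these values is
\[
\left(\frac{B^2+B^{-2}}{A^2+A^{-2}}\right)^{m-1},
\]
which is \emph{independent of the state} $C$ because $m$ is constant across all states. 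Since the $A$-weights of the states agree in the two expansions, this constant factors out of the whole sum, yielding
\[
\bracket{D}=\left(\frac{B^2+B^{-2}}{A^2+A^{-2}}\right)^{m-1}\bracket{D}_{\mathrm K}.
\]

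Finally I would assemble the pieces. Multiplying by $(-A^3)^{-w(D)}$ and substituting $A^2=t^{-\onehalf}$, $B^2=s^{-\onehalf}$ turns the prefactor into $\left(\frac{s^\onehalf+s^{-\onehalf}}{t^\onehalf+t^{-\onehalf}}\right)^{m-1}$, while $\bracket{D}_{\mathrm K}\cdot(-A^3)^{-w(D)}$ is, by Kauffman's normalization, precisely $V_L(t)$; this gives the asserted formula. The only genuine subtlety is the invariance of the axis-intersection count $2m$ across all states of the resolution, which is exactly what allows the prefactor to be pulled outside the sum; the remainder is bookkeeping of a constant. A fully rigorous variant replaces the state-sum comparison by a short induction on the number of off-axis crossings, using \eqref{eq:SkeinA} to pass from $D$ to its two smoothings (each again meeting the axis in $2m$ points and having one fewer crossing), with the crossingless base case handled directly by \eqref{eq:SkeinC} together with $V_{\bigcirc^n}(t)=(-t^\onehalf-t^{-\onehalf})^{n-1}$.
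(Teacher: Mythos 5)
Your proof is correct and is essentially the paper's own argument: the paper proves this proposition by exactly the short induction on the number $c$ of off-axis crossings that you offer as your ``fully rigorous variant'' (base case $c=0$ by the circle evaluation \eqref{eq:SkeinC}, inductive step by resolving one off-axis crossing via \eqref{eq:SkeinA} on both sides), and your state-sum comparison with the ordinary Kauffman bracket is just the unrolled form of that induction, resting on the same key observation that off-axis smoothings leave the $2m$ axis intersections unchanged. No gap.
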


\begin{proof}
  The claim follows by induction on the number $c$ of crossings off the axis.
  If $c = 0$ then we simply have the circle evaluation formula \eqref{eq:SkeinC}.
  If $c \ge 1$ then we can resolve one crossing off the axis and apply 
  the skein relation \eqref{eq:SkeinA} on both sides of the equation.
\end{proof}

\begin{remark}
  The invariant $W \colon \odiagrams \to \Z(s^\onehalf,t^\onehalf)$
  satisfies some familiar skein relations:
  \renewcommand{\pic}[1]{W{\left(\raisebox{-0.9ex}{\includegraphics[height=3ex]{#1}}\right)}}
  \begin{align}
    t^{-1} \pic{skein+} - t^{+1} \pic{skein-} & = (t^\onehalf-t^{-\onehalf}) \pic{skein0}
    \\
    s^{-1} \pic{skein+axis} - s^{+1} \pic{skein-axis} & = (s^\onehalf-s^{-\onehalf}) \pic{skein0axis}
    \\
    \pic{finger2} & = \frac{s^\onehalf+s^{-\onehalf}}{t^\onehalf+t^{-\onehalf}} \pic{finger1}    
    \\
    \pic{trivial-axis} & = 1
  \end{align}
 
  We do not claim that these oriented skein relations suffice 
  to determine the map $W$ uniquely; this is probably false,
  and further relations are necessary to achieve uniqueness.
  % 
  % In order to make the non-uniqueness precise, it might be of interest to calculate 
  % the associated skein module, that is, the free module generated 
  % by oriented diagrams modulo Reidemeister moves respecting 
  % the axis and modulo the above skein relations.
  % We suspect that the resulting quotient is not
  % generated by the trivial knot alone, but we
  % did not seriously attempt to prove this.
  %
  In particular the oriented skein relations do not lead to a simple 
  algorithm that calculates $W(D)$ for every diagram $D$.
  This is in contrast to the Jones polynomial,
  for which the oriented skein relation is equivalent
  to the construction via Kauffman's bracket.

  These difficulties suggest that the bracket polynomial
  of Definition \ref{def:TwoVariableBracket} and its defining 
  skein relations \eqref{eq:SkeinA}, \eqref{eq:SkeinB}, and \eqref{eq:SkeinC}
  are the more natural construction in our context.
  For symmetric unions we describe a practical algorithm 
  in Proposition \ref{prop:SymmetricSkein} below.
\end{remark}

%%%%%%%%%%%%%%%%%%%%%%%%%%%%%%%%%%%%%%%%%%%%%%%%%%%%%%%%%%%%%%%%%%%%%%%%%%%%%

\section{The $W$-polynomial of symmetric unions} \label{sec:SymmetricUnionPolynomial}

Having constructed the $W$-polynomial on arbitrary diagrams, 
we now return to symmetric diagrams, and in particular symmetric unions.  
It is in this setting that the $W$-polynomial reveals its true beauty:
integrality (\sref{sub:Integrality}), simple recursion formulae 
(\sref{sub:SymmetricUnionPolynomial}), and special values in $t$ and $s$ 
(\sref{sub:SpecialValues:t}-\sref{sub:SpecialValues:s}).
We continue to use the notation $A^2 = t^{-\onehalf}$ and $B^2 = s^{-\onehalf}$.

\subsection{Integrality} \label{sub:Integrality}

Our first goal is to control the denominator that appears in $W_D$, 
and then to show that this denominator disappears if $D$ is a symmetric union.

\begin{example} \label{exm:WeakIntegrality}
  For the three symmetric diagrams of \fref{fig:SymmetricDiagrams} we find
  \begin{align*}
    W_a(s,t) & = 1 + s^{-1} - s^{-1} (t^{-1}+t^{-3}-t^{-4}) (t+t^3-t^4) , \\
    W_b(s,t) & = s^{\threehalves} \frac{(t^{\onehalf} + t^{-\onehalf})^2}{s^{\onehalf} + s^{-\onehalf}} - s^{2} + s^{3} - s^{4} , \\
    W_c(s,t) & = - s \frac{(t^{\onehalf} + t^{-\onehalf})^2}{s^{\onehalf} + s^{-\onehalf}} - s^{\fivehalves} + s^{\threehalves} .
  \end{align*}
  The symmetry of $D$ implies that $W_D$ is symmetric in $t \leftrightarrow t^{-1}$.
  By specializing $s \mapsto t$ we recover, of course, the Jones polynomials
  of the knot $6_1$, the trefoil knot $3_1$, and the Hopf link $L2a1$, respectively.
  Here we orient the Hopf link (c) such that the reflection along the axis preserves orientations.
\end{example}

We shall see that the symmetry of $D$ also entails that $W_D$ 
has no denominator, apart from $s^{\onehalf} + s^{-\onehalf}$.
The difficulty in proving this integrality of $W_D$ is to find 
a suitable induction argument: resolving a symmetric diagram $D$ 
will lead to asymmetric diagrams, and for asymmetric diagrams 
the desired integrality does not hold in general.

The right setting seems to be the study of ribbon surfaces.
Since this approach introduces its own ideas and techniques
we refer to the article \cite{Eisermann:2009}, whose key result 
is a surprising integrality property of the Jones polynomial:

\begin{theorem}[\cite{Eisermann:2009}] \label{thm:RibbonDivisibility}
  If a link $L \subset \R^3$ bounds a ribbon surface 
  of Euler characteristic $m > 0$,
  then its Jones polynomial $V(L)$ is divisible by 
  $V(\bigcirc^m) = ( - t^{\onehalf} - t^{-\onehalf} )^{m-1}$.
  % the polynomial of the $n$-component trivial link.
  \qed
\end{theorem}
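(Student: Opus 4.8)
The plan is to recast the divisibility as a statement about the ordinary (one-variable) Kauffman bracket, where it becomes a concrete bookkeeping problem. Write $d = -A^2 - A^{-2}$ and use $t = A^{-4}$, so that $d = -t^{\onehalf} - t^{-\onehalf}$ and hence $V(\bigcirc^m) = d^{m-1}$. Since the writhe normalization $(-A^3)^{-w(L)}$ is a unit, the claim is equivalent to showing that the normalized bracket $\bracket{L}$ (with $\bracket{\bigcirc} = 1$ and $\bracket{\bigcirc \sqcup D} = d\,\bracket{D}$) is divisible by $d^{m-1}$ in $\Z[A^{\pm1}]$. The trivial link furnishes the base case, $\bracket{\bigcirc^p} = d^{p-1}$, and the task is to transport this divisibility across the ribbon surface out to its boundary.

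First I would put $\Sigma$ into a standard handle form: after isotopy it consists of $p$ embedded disks (0-handles) joined by $q$ bands (1-handles), with $\chi(\Sigma) = p - q = m$, the only singularities being ribbon intersections where a band passes through a disk. The boundary of the disks alone is a trivial link $\bigcirc^p$, and attaching the $q$ bands exhibits $L$ as the result of $q$ successive band moves (saddles). This suggests an induction on $q$: starting from $d^{p-1} \mid \bracket{\bigcirc^p}$, one hopes that each band move costs at most one power of $d$, leaving divisibility by $d^{p-1-q} = d^{m-1}$ at the end.

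The step I expect to be the main obstacle is precisely this per-band control, and here the naive estimate is false. A band move exchanges the two Kauffman smoothings at one site, so in the state sum $\bracket{L} = \sum_s A^{\sigma(s)} d^{\,|s|-1}$ it alters each loop count $|s|$ by $\pm 1$; but the sign depends on the state, and an unrestricted saddle can lower the $d$-valuation by two. For instance, merging two of the three components of $\bigcirc^3$ with a clasping band produces the Hopf link, whose bracket is not divisible by $d$ at all. Hence locality alone cannot suffice: the argument must use that the bands arise from an \emph{embedded} ribbon surface, so that only ribbon singularities occur and the forbidden recombinations are excluded.

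To make the bookkeeping robust I would organize the induction through the $(1{+}1)$-dimensional cobordism TQFT underlying the bracket, namely the Temperley--Lieb category with loop value $d$: a ribbon surface is exactly a morphism $\emptyset \to L$ built from cups, caps, and saddles, and $\bracket{L}$ is its evaluation. Equipping the Temperley--Lieb modules with the $d$-adic filtration and checking that each elementary ribbon cobordism respects it should convert the Euler characteristic directly into the exponent $m-1$, with each disk beyond the first contributing a surviving factor of $d$. Carrying out this filtration argument so that the factors are not destroyed by the subsequent band attachments is the technical heart of the matter; it is accomplished, by somewhat different means, in \cite{Eisermann:2009}.
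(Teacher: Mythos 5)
You have correctly located the crux of the problem, but you have not crossed it, and the gap is essentially the whole proof. Your reduction to divisibility of the Kauffman bracket by powers of $d=-A^{2}-A^{-2}$ is fine, and your diagnosis is exactly right: a single saddle can lower the $d$-valuation by two, and your example is correct (attaching to $\bigcirc^{3}$ a band that hooks around the third circle yields the Hopf link; the resulting surface has Euler characteristic $2$ but a \emph{clasp} singularity, and $\bracket{\mathrm{Hopf}}=-A^{4}-A^{-4}$ has no factor $d$), so the ribbon condition must enter in an essential, non-local way. The trouble is that the repair you propose is structurally unable to use that condition. A clasp surface decomposes into exactly the same elementary pieces (births, deaths, saddles) as a ribbon surface: the difference between the two lies in how the bands meet the disks in the ambient space, which is invisible to any argument that inspects the elementary cobordisms one at a time. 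Consequently a filtration bound that is verified ``for each elementary ribbon cobordism'' separately would apply verbatim to the clasp surface above and would prove a false statement. Relatedly, $\bracket{L}$ is \emph{not} the Temperley--Lieb evaluation of the abstract cobordism built from cups, caps and saddles --- it depends on the diagram of $L$, i.e.\ on the embedding --- so the object your filtration is supposed to act on is not the one carrying the information. Your own text acknowledges this tension (``locality alone cannot suffice'') but never resolves it: you never specify which datum of the embedded surface the induction consumes, and your closing sentence concedes that this ``technical heart'' is deferred to \cite{Eisermann:2009}. What you have is a correct reduction plus a correct analysis of the obstacle, followed by a research program rather than a proof.

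For the comparison you were asked to make: the paper itself contains no proof of this theorem. It is stated as a citation with an immediate end-of-proof mark, and \sref{sub:Integrality} says explicitly that the argument ``introduces its own ideas and techniques'' and is therefore carried out in the separate article \cite{Eisermann:2009}, by means different from your sketch (a skein-theoretic induction organized around band presentations of the ribbon surface, in which the ribbon singularities themselves are resolved --- precisely the global input your elementary-cobordism bookkeeping lacks). So your final deferral lands you in the same place as the paper; the difference is that the paper makes no pretense of proving the statement here, while your intermediate scaffolding, if kept, would have to be replaced rather than merely completed.
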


This is precisely what we need to ensure the integrality of $W_D$:

\begin{corollary}[integrality] \label{cor:WeakIntegrality}
  Let $D$ be a symmetric diagram that perpendicularly 
  traverses the axis in $2m$ points.  Then the bracket polynomial satisfies 
  \begin{equation} \label{eq:WeakIntegralityBracket}
    \bracket{D} \in \Z[A^{\pm1},B^{\pm1}] \cdot (B^{2} + B^{-2})^{m-1} 
  \end{equation}
  and, equivalently, the $W$-polynomial satisfies 
  \begin{equation} \label{eq:WeakIntegrality}
    W_D \in \Z[s^{\pm\onehalf},t^{\pm\onehalf}] 
    \cdot (s^{\onehalf} + s^{-\onehalf})^{m-1} .
  \end{equation}
\end{corollary}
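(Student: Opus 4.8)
The plan is to deduce the bracket statement \eqref{eq:WeakIntegralityBracket} from the ribbon divisibility Theorem \ref{thm:RibbonDivisibility}, and then obtain \eqref{eq:WeakIntegrality} by the routine substitution $A^2 = t^{-\onehalf}$, $B^2 = s^{-\onehalf}$ together with the writhe normalization: its factors $(-A^3)^{-\alpha(D)}$ and $(-B^3)^{-\beta(D)}$ are units, it forces only even powers of $A$ and $B$ to survive so that $\Z[A^{\pm1},B^{\pm1}]$ descends to $\Z[s^{\pm\onehalf},t^{\pm\onehalf}]$, and the surviving factor $(B^2+B^{-2})^{m-1}$ is literally $(s^{\onehalf}+s^{-\onehalf})^{m-1}$. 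The difficulty flagged in the text is that resolving $D$ destroys its symmetry; I circumvent this by resolving \emph{only} the crossings on the axis. Applying the on-axis skein relation \eqref{eq:SkeinB} at each such crossing, the crucial observation is that both of its smoothings are themselves mirror-symmetric, so every term of the expansion is again a symmetric diagram $D_i$ with no crossings on the axis, and the coefficients lie in $\Z[B^{\pm1}]$. This yields $\bracket{D} = \sum_i c_i(B)\,\bracket{D_i}$ with $c_i \in \Z[B^{\pm1}]$, reducing the problem to symmetric diagrams without on-axis crossings.

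Next I would count axis-traversals. The vertical smoothing in \eqref{eq:SkeinB} keeps the two strands on their respective sides and adds no traversal, whereas the horizontal smoothing sends two arcs across the axis and adds exactly two; since the original $2m$ traversals of $D$ are untouched, each $D_i$ traverses the axis in $2m_i$ points with $m_i = m + (\text{number of horizontal smoothings}) \ge m$. For a symmetric diagram without on-axis crossings every traversal belongs to a component of type (a), and each such component contributes exactly two traversals by Remark \ref{rem:ThreeComponentTypes}; hence $D_i$ has $m_i$ components of type (a), and by Proposition \ref{prop:SymmetricRibbonSurface} the link $L_i$ it represents bounds a ribbon surface of Euler characteristic $m_i$ (the disks contributing $1$ each, the annuli and M\"obius bands $0$).

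Finally, resolving the remaining off-axis crossings of $D_i$ and collecting the circle evaluations \eqref{eq:SkeinC}, exactly as in Proposition \ref{prop:NoCrossingsOnAxis}, expresses $\bracket{D_i}$ as $\bigl(\tfrac{B^2+B^{-2}}{A^2+A^{-2}}\bigr)^{m_i-1}$ times the ordinary Kauffman bracket $\langle D_i\rangle \in \Z[A^{\pm1}]$, so the only denominator is $(A^2+A^{-2})^{m_i-1}$. When $m_i \ge 1$, Theorem \ref{thm:RibbonDivisibility} applied to $L_i$ shows $V_{L_i}(t)$ is divisible by $(-t^{\onehalf}-t^{-\onehalf})^{m_i-1}$; translating through $t^{\onehalf} = A^{-2}$ and the writhe unit, $\langle D_i\rangle$ is divisible by $(A^2+A^{-2})^{m_i-1}$ in $\Z[A^{\pm1}]$, the denominator cancels, and $\bracket{D_i} \in \Z[A^{\pm1}]\cdot(B^2+B^{-2})^{m_i-1}$. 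Since $m_i \ge m$, this lies in $\Z[A^{\pm1},B^{\pm1}]\cdot(B^2+B^{-2})^{m-1}$; the degenerate case $m_i = 0$ can only occur when $m = 0$, where the surviving factor $(B^2+B^{-2})^{-1}$ already matches $(B^2+B^{-2})^{m-1}$ with no divisibility needed. Summing over $i$ with coefficients in $\Z[B^{\pm1}]$ gives \eqref{eq:WeakIntegralityBracket}.

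I expect the main obstacle to be exactly the bookkeeping of the reduction rather than any analytic estimate: one must verify that resolving on-axis crossings keeps every intermediate diagram symmetric (so that Proposition \ref{prop:SymmetricRibbonSurface} is available at each term) and that the inequality $m_i \ge m$ holds uniformly, so that the worst denominator across all states is controlled by the single exponent $m-1$. The genuinely hard input, namely the divisibility of the Jones polynomial of $L_i$ by that of the trivial link, is supplied ready-made by Theorem \ref{thm:RibbonDivisibility}, so the present argument is essentially a careful assembly of that result with the symmetry-preserving expansion.
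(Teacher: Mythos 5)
Your proof is correct and follows essentially the same route as the paper's: the paper also reduces to the case of no crossings on the axis via the skein relation \eqref{eq:SkeinB} (whose smoothings remain symmetric, there packaged as an induction on the number of on-axis crossings), and then combines Proposition \ref{prop:NoCrossingsOnAxis}, Proposition \ref{prop:SymmetricRibbonSurface} and Theorem \ref{thm:RibbonDivisibility} exactly as you do. Your one-shot state-sum expansion with the uniform bound $m_i \ge m$ is just the unrolled version of that induction, and is in fact slightly more explicit about why the horizontal smoothing, which raises the traversal count, causes no harm.
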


\begin{proof}
  We first consider the case where $D$ has no crossings on the axis.
  By Proposition \ref{prop:NoCrossingsOnAxis} we then know that
  \[ W_D(s,t) = \left(\frac{s^{\onehalf}+s^{-\onehalf}}{t^\onehalf+t^{-\onehalf}}\right)^{m-1} V_L(t) \]
  where $V_L \in \Z[t^{\pm\onehalf}]$ is the Jones polynomial of the link $L$ represented by $D$.
  Using the notation of \sref{sub:SymmetricDiagrams},
  the diagram $D$ has $m$ components of type (a), no components of type (b), 
  and all components of type (c) come in pairs separated by the axis.
  According to Proposition \ref{prop:SymmetricRibbonSurface},
  the link $L$ bounds a ribbon surface of Euler characteristic $m$.
  Theorem \ref{thm:RibbonDivisibility} thus ensures that $V(L)$
  is divisible by $(t^{\onehalf} + t^{-\onehalf} )^{m-1}$,
  so \eqref{eq:WeakIntegrality} holds.

  Both assertions \eqref{eq:WeakIntegralityBracket} and \eqref{eq:WeakIntegrality}
  are equivalent because $\bracket{D}$ and $W_D$ differ only by a writhe normalization 
  of the form $W_D = \bracket{D} \cdot (-A^{-3})^{\alpha(D)} \cdot (-B^{-3})^{\beta(D)}$.
  We can now proceed by induction on the number of crossings on the axis
  using skein relation \eqref{eq:SkeinB}:
  \renewcommand{\pic}[1]{\bracket{\raisebox{-0.9ex}{\includegraphics[height=3ex]{#1}}}}
  \[
  \pic{cross-o-axis} = B^{+1} \pic{cross-h-axis} + B^{-1} \pic{cross-v-axis} , \qquad
  \pic{cross-u-axis} = B^{-1} \pic{cross-h-axis} + B^{+1}\pic{cross-v-axis} .
  \]
  The right hand sides involve only symmetric diagrams, 
  so we can apply our induction hypothesis \eqref{eq:WeakIntegralityBracket}.
  The skein relation thus expresses $\bracket{D}$ as a linear combination 
  of two polynomials in $\Z[A^{\pm1},B^{\pm1}] \cdot (B^{-2} + B^{2})^{m-1}$, 
  so \eqref{eq:WeakIntegralityBracket} holds.
\end{proof}

Notice that for $m=0$ the denominator $s^{\onehalf} + s^{-\onehalf}$ 
is in general unavoidable, as illustrated by Example \ref{exm:WeakIntegrality}.
If the diagram $D$ is symmetric and perpendicularly traverses the axis at least once ($m \ge 1$),
then $W_D$ always is a Laurent polynomial in $s^{\onehalf}$ and $t^{\onehalf}$,
that is, $W_D \in \Z[s^{\pm\onehalf},t^{\pm\onehalf}]$.
This integrality property will be re-proven and strengthened for 
symmetric unions in Corollary \ref{cor:StrongIntegrality} below.

\subsection{Symmetric unions} \label{sub:SymmetricUnionPolynomial}

We will now specialize to symmetric union diagrams,
that is, we assume that each component is of type (a)
as explained in \sref{sub:SymmetricUnions}.

\begin{proposition} \label{prop:SymmetricCrossings}
  Let $D$ be a symmetric union link diagram with $n$ components.
  \begin{enumerate}
  \item
    Each crossing on the axis involves two strands of the same component. \\
    For every orientation, $\aocr$ is a positive crossing and $\aucr$ is a negative crossing.
  \item
    The resolution $\aocr \mapsto \avcr$ yields a symmetric 
    union diagram with $n$ components, while $\aocr \mapsto \ahcr$ 
    yields a symmetric union diagram with $n+1$ components.
  \item
    Each crossing off the axis and its mirror image involve the same components. \\
    Their signs are opposite so that $\alpha(D) = 0$.
  \end{enumerate}
\end{proposition}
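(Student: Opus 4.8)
The three parts are largely independent, so the plan is to treat them in turn, in each case reducing to a local computation governed by the reflection $\rho\colon(x,y)\mapsto(-x,y)$ and the defining property of type~(a) components, namely that $\rho$ maps each such component to itself while reversing its orientation.

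For part~(1) I would first argue that an on-axis crossing involves a single component. Its two strands are interchanged by $\rho$, so if they lie on components $C_1,C_2$ then $\rho(C_1)=C_2$; since every component is of type~(a) we have $\rho(C_1)=C_1$, forcing $C_1=C_2$, in accordance with Remark~\ref{rem:ThreeComponentTypes}. For the sign, the key observation is that reversing the orientation of a component reverses \emph{both} strands at such a crossing and hence leaves its sign unchanged; thus the sign is independent of the chosen orientation. To pin it down I would compute locally: writing $d\rho=\operatorname{diag}(-1,1)$ and using that $\rho$ reverses the orientation of the type~(a) component, a strand with oriented tangent $(v_x,v_y)$ is paired with its mirror strand carrying oriented tangent $-d\rho(v_x,v_y)=(v_x,-v_y)$. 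Reading off the determinant of the over/under tangent frame then gives sign $+1$ for $\aocr$ and, symmetrically, $-1$ for $\aucr$.

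The heart of the matter is part~(2), and I expect the bookkeeping there to be the main obstacle. Both smoothings $\avcr$ and $\ahcr$ are $\rho$-invariant local pictures, so resolving keeps the diagram symmetric; the real content is the change in the number of components and the verification that every resulting component is again of type~(a). The plan is to exploit that $\rho$ restricts to an orientation-reversing involution of the circle $C$ carrying the crossing, hence has exactly two fixed points $P,Q$ --- precisely the two perpendicular axis traversals of $C$. This decomposes $C=\alpha\cup\rho(\alpha)$ into two arcs swapped by $\rho$, with one strand of the crossing lying on $\alpha$ and its mirror on $\rho(\alpha)$. Tracing the four crossing ends $a,b,c,d$ (labelled SW, NE, SE, NW, so that $c=\rho(a)$ and $d=\rho(b)$) through these arcs shows that outside the crossing they are connected in the pairs $a$--$c$ and $b$--$d$, the connection running through the fixed points $P$ and $Q$. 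With this combinatorial fact in hand the two smoothings are immediate: $\avcr$ joins $a$--$d$ and $b$--$c$, which recombines with the external pairing into a single cycle, leaving $n$ components and the two traversals $P,Q$ on $C$ (type~(a) preserved); whereas $\ahcr$ joins $a$--$c$ and $b$--$d$, which matches the external pairing and splits $C$ into two cycles, each inheriting one old traversal $P$ or $Q$ together with one of the two new horizontal traversals, hence $n+1$ components, each again of type~(a).

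Finally, part~(3) follows the same local recipe. An off-axis crossing $X$ satisfies $\rho(X)\ne X$, and its strands lie on components fixed by $\rho$, so $X$ and $\rho(X)$ involve the same components. For the signs I would compute as in part~(1): at the mirror crossing each oriented tangent is replaced by $-d\rho$ of the original, and since $\det[-d\rho(u),-d\rho(w)]=\det(d\rho)\,\det[u,w]=-\det[u,w]$, the mirror crossing carries the opposite sign. Thus off-axis crossings cancel in mirror pairs and $\alpha(D)=0$. This is exactly the mechanism of Remark~\ref{rem:SymmetricLinking}, the point being that the exceptional case recorded there (a type~(a) component meeting one of type~(b) or~(c)) cannot arise in a symmetric union, where every component is of type~(a).
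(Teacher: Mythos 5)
Your proposal is correct and follows essentially the same route as the paper: the paper's own proof likewise rests on the single observation that $\rho$ maps every component of a symmetric union to itself reversing orientation, so that each on-axis crossing joins two strands of one component pointing into the same halfspace, which pins down the sign independently of the chosen orientation. The paper then dismisses parts (2) and (3) with ``the rest is clear''; your fixed-point/arc-pairing argument for (2) (using that $\rho$ restricted to the component is an orientation-reversing involution with exactly two fixed points, forcing the external pairing of the crossing ends) and your determinant computation for (3) are correct and faithful fillings-in of exactly the details the authors left implicit.
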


\begin{proof}
  The assertions follow from our hypothesis that for a symmetric union
  the reflection $\rho$ maps each component to itself 
  reversing the orientation (see \sref{sub:SymmetricUnions}).
  In particular, each crossing on the axis 
  involves two strands of the same component and
  both strands point to the same halfspace.
  This means that $\aocr$ is necessarily a positive 
  crossing ($\apcr$ or \reflectbox{$\ancr$}),
  while $\aucr$ is necessarily a negative 
  crossing ($\ancr$ or \reflectbox{$\apcr$}).
  The rest is clear.
\end{proof}

In particular, the pairwise linking numbers of 
the components of a symmetric union $D$ vanish.
This also follows from the more geometric construction of 
ribbon surfaces in Proposition \ref{prop:SymmetricRibbonDisks}.
In general, even for symmetric diagrams, the linking number 
need not vanish (see Remark \ref{rem:SymmetricLinking}).

\begin{corollary}
  For every symmetric union link diagram $D$
  the polynomial $W(D)$ is invariant under 
  orientation reversal of any of the components.  
  In other words, $W(D)$ is well-defined 
  for unoriented symmetric union diagrams.
  \qed
\end{corollary}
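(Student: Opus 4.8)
The plan is to exploit the factorization $W(D) = \bracket{D} \cdot (-A^3)^{-\alpha(D)} \cdot (-B^3)^{-\beta(D)}$. Since the bracket $\bracket{D}$ is computed from the unoriented diagram, it is already insensitive to any change of orientation; hence the entire effect of reorienting components is carried by the two writhe exponents $\alpha(D)$ and $\beta(D)$. It therefore suffices to show that neither of them changes when we reverse the orientation of a single component $C$, the case of an arbitrary subset of components following by reversing one component at a time. Throughout I would lean on the two structural facts from Proposition \ref{prop:SymmetricCrossings}: every crossing on the axis is a self-crossing, and every component is of type (a), so the reflection $\rho$ maps each component to itself.

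The $B$-writhe is the easy half. By Proposition \ref{prop:SymmetricCrossings}(1) each crossing on the axis involves two strands of one and the same component. Reversing the orientation of $C$ therefore either reverses both strands at such a crossing (when the crossing belongs to $C$) or reverses neither (when it belongs to another component). In both cases the crossing sign is unchanged, since flipping two strands, or none, leaves the sign fixed. Summing over all on-axis crossings shows that $\beta(D)$ is unaffected.

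The $A$-writhe is the part that requires care, and I expect it to be the main obstacle: reorienting a single component is an asymmetric operation, so one cannot simply invoke the symmetric cancellation of Proposition \ref{prop:SymmetricCrossings}(3) verbatim. Instead I would argue pairwise. Group the off-axis crossings into mirror pairs $\{x, \rho(x)\}$; by Proposition \ref{prop:SymmetricCrossings}(3) the two crossings of each pair carry opposite signs, so $\alpha(D) = 0$ for the given orientation. The key point is that, because every component is fixed setwise by $\rho$ and $\rho$ preserves the over/under data off the axis, the over-strand and under-strand meeting at $x$ lie in exactly the same two components as the over- and under-strands meeting at $\rho(x)$. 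Consequently, reversing $C$ reverses the over-strand at $x$ if and only if it does so at $\rho(x)$, and likewise for the under-strand, so the same number of strands ($0$, $1$, or $2$) is reversed at $x$ as at $\rho(x)$. When this number is $0$ or $2$, both signs are preserved; when it is $1$, both signs flip. In either case the two crossings of the pair continue to carry opposite signs, so every mirror pair still sums to zero and $\alpha(D)$ remains $0$.

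Combining the two halves, neither writhe exponent is affected by reversing $C$, while $\bracket{D}$ is orientation-free; hence $W(D)$ is unchanged. Iterating over the components then shows that $W(D)$ depends only on the underlying unoriented symmetric union diagram, as claimed. The only genuine bookkeeping occurs in the $A$-writhe step, and there the whole difficulty collapses to the single observation that $\rho$ preserves the component-membership of each strand, so that the perturbation caused by reorienting $C$ is synchronized across each mirror pair.
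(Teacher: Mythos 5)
Your proof is correct and follows essentially the same route as the paper, which states this corollary as an immediate consequence of Proposition \ref{prop:SymmetricCrossings}: the bracket is defined on unoriented diagrams, part (1) forces the sign of every on-axis crossing independently of orientation (so $\beta(D)$ is unchanged), and part (3) gives $\alpha(D)=0$ for every orientation. Your pairwise synchronization argument for the $A$-writhe is just a more hands-on way of seeing that last point --- since the diagram with one component reversed is again an oriented symmetric union, part (3) applies to it verbatim and yields $\alpha=0$ directly.
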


Even when working with \emph{unoriented} diagrams, 
by Proposition \ref{prop:SymmetricCrossings} we already know 
the $B$-writhe and can anticipate the $B$-normalization.
This observation can be reformulated in the following 
normalized skein relations, which allow for a recursive calculation
of $W(D)$ for every symmetric union diagram $D$:

\begin{proposition} \label{prop:SymmetricSkein}
  Consider a symmetric union diagram $D$ representing a link $L$ 
  with $n$ components.  If $D$ has no crossings on the axis then
  \begin{equation} \label{eq:SUskein0}
    W_D(s,t) = \left(\frac{s^{\onehalf}+s^{-\onehalf}}{t^{\onehalf}+t^{-\onehalf}}\right)^{n-1} V_L(t) ,
  \end{equation}
  where $V_L(t)$ is the Jones-polynomial of the link $L$. 

  If $D$ has crossings on the axis, 
  then we can apply the following recursion formulae:
  \renewcommand{\pic}[1]{W{\left(\raisebox{-0.9ex}{\includegraphics[height=3ex]{#1}}\right)}}
  \begin{align}
    \label{eq:SUskein1}
    \pic{cross-o-axis} &= -s^{+\onehalf} \pic{cross-h-axis} - s^{+1} \pic{cross-v-axis} ,
    \\
    \label{eq:SUskein2}
    \pic{cross-u-axis} &= -s^{-\onehalf} \pic{cross-h-axis} - s^{-1} \pic{cross-v-axis} .
  \end{align}
  % These rules allow for a recursive calculation of $W(D)$
  % for every symmetric union diagram $D$.
\end{proposition}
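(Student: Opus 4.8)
The plan is to derive all three formulas directly from the definitions of the $W$-polynomial together with the invariance results already established, treating the no-crossings case and the recursion formulas separately.

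For the no-crossings formula \eqref{eq:SUskein0}, I would observe that a symmetric union diagram with $n$ components is in particular a symmetric diagram in which every component is of type (a), so it perpendicularly traverses the axis in exactly $2n$ points (that is, $m=n$ in the notation of Remark \ref{rem:ThreeComponentTypes}). The statement is then nothing but the specialization of Proposition \ref{prop:NoCrossingsOnAxis} to this situation, with $m$ replaced by $n$. So this first part requires no new argument beyond invoking the earlier proposition and recording that $m=n$ for symmetric unions.

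\begin{proof}
  First suppose that $D$ has no crossings on the axis.  Being a symmetric
  union diagram, each of its $n$ components is of type (a), and hence (by
  Remark \ref{rem:ThreeComponentTypes}) perpendicularly traverses the axis
  in exactly two points.  Thus $D$ perpendicularly traverses the axis in
  $2n$ points, and Proposition \ref{prop:NoCrossingsOnAxis} (with $m=n$)
  yields \eqref{eq:SUskein0}.

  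Now suppose that $D$ has at least one crossing on the axis.  By
  Proposition \ref{prop:SymmetricCrossings}(1), every such crossing
  \aocr is positive and every crossing \aucr is negative, so locally
  the $B$-writhe contributes $\beta = +1$ at \aocr and $\beta = -1$
  at \aucr.  We start from the unnormalized skein relation
  \eqref{eq:SkeinB} for the bracket and multiply by the writhe
  normalization factor $(-A^{-3})^{\alpha}(-B^{-3})^{\beta}$ that
  converts $\bracket{\cdot}$ into $W$.  The two resolutions
  \ahcr and \avcr appearing on the right-hand side of
  \eqref{eq:SkeinB} share the same $A$-writhe as $D$ (their only
  crossings off the axis are those of $D$), so the factor
  $(-A^{-3})^{\alpha}$ is common to all three terms and cancels.
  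For the crossing \aocr we have $\beta(D) = \beta(\avcr)+1$ and
  $\beta(D) = \beta(\ahcr)+1$, since removing a positive on-axis
  crossing lowers the $B$-writhe by one in either resolution.
  Hence, writing the normalization relative to the resolved diagrams,
  \renewcommand{\pic}[1]{W{\left(\raisebox{-0.9ex}{\includegraphics[height=3ex]{#1}}\right)}}
  \[
    \pic{cross-o-axis}
    = (-B^{-3})\Bigl( B^{+1}\pic{cross-h-axis}
                      + B^{-1}\pic{cross-v-axis} \Bigr)
    = -B^{-2}\pic{cross-h-axis} - B^{-4}\pic{cross-v-axis} .
  \]
  Substituting $B^{2}=s^{-\onehalf}$ gives $-B^{-2}=-s^{\onehalf}$ and
  $-B^{-4}=-s$, which is exactly \eqref{eq:SUskein1}.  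The computation
  for \aucr is identical after replacing the local $B$-writhe $+1$ by
  $-1$ and using the second line of \eqref{eq:SkeinB}; one obtains
  $-B^{2}=-s^{-\onehalf}$ and $-B^{4}=-s^{-1}$, which is
  \eqref{eq:SUskein2}.

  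Finally, by Proposition \ref{prop:SymmetricCrossings}(2) both
  resolutions $\aocr\mapsto\avcr$ and $\aocr\mapsto\ahcr$ again yield
  symmetric union diagrams, with $n$ and $n+1$ components respectively.
  Thus the recursion strictly decreases the number of on-axis crossings
  while keeping us within the class of symmetric unions, so repeated
  application terminates in the crossing-free case handled above.
\end{proof}

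The step I expect to be the only genuine subtlety is the careful
bookkeeping of the writhe factors: one must check that the off-axis
$A$-writhe $\alpha$ is unchanged by an on-axis resolution (so that its
normalization factor is common and cancels) while the on-axis $B$-writhe
$\beta$ drops by exactly one, with the correct sign dictated by
Proposition \ref{prop:SymmetricCrossings}(1).  Everything else is a
direct translation of the bracket skein relation \eqref{eq:SkeinB}
through the substitution $B^{2}=s^{-\onehalf}$, combined with the
earlier invariance results to guarantee that $W$ is well defined on the
resolved diagrams.
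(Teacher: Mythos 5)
Your proof is correct and takes essentially the same route as the paper: the first formula is exactly the specialization of Proposition \ref{prop:NoCrossingsOnAxis} with $m=n$, and the recursion formulae are obtained by normalizing the bracket skein relation \eqref{eq:SkeinB} using the sign and resolution information of Proposition \ref{prop:SymmetricCrossings}. Your explicit writhe bookkeeping (noting that $\alpha$ is common to all three terms---indeed it vanishes for symmetric unions by Proposition \ref{prop:SymmetricCrossings}(3)---while $\beta$ drops by exactly one with the sign dictated by the crossing type) is precisely what the paper's phrase ``suitably normalized'' compresses.
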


\begin{proof}
  Equation \eqref{eq:SUskein0} follows from Proposition \ref{prop:NoCrossingsOnAxis}:
  since $D$ is a symmetric union, we know that $m = n$.
  If $D$ has crossings on the axis, then we apply 
  the skein relation \eqref{eq:SkeinB} suitably normalized
  according to Proposition \ref{prop:SymmetricCrossings}.
  This proves Equations \eqref{eq:SUskein1} and \eqref{eq:SUskein2}.
\end{proof}

For symmetric unions we can strengthen Corollary \ref{cor:WeakIntegrality}
in the following form:

\begin{corollary}[strong integrality] \label{cor:StrongIntegrality}
  If $D$ is a symmetric union knot diagram, then 
  $W_D$ is a Laurent polynomial in $s$ and $t$.
  More generally, if $D$ is a symmetric union diagram with $n$ components, then
  $W_D \in \Z[s^{\pm1},t^{\pm1}] \cdot (s^{\onehalf}+s^{-\onehalf})^{n-1}$.
\end{corollary}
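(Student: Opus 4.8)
The plan is to prove the more general statement by induction on the number of crossings of $D$ on the axis, upgrading the weak integrality of Corollary \ref{cor:WeakIntegrality} — which already supplies the factor $(s^{\onehalf}+s^{-\onehalf})^{n-1}$, but only with half-integer powers of $s$ and $t$ — to genuine integer powers. I recall that for a symmetric union the number $m$ of axis traversals equals the number $n$ of components, and I abbreviate the target condition by writing $W_D \in \Z[s^{\pm1},t^{\pm1}] \cdot (s^{\onehalf}+s^{-\onehalf})^{n-1}$.

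For the base case, suppose $D$ has no crossings on the axis. Then equation \eqref{eq:SUskein0} of Proposition \ref{prop:SymmetricSkein} gives $W_D = (s^{\onehalf}+s^{-\onehalf})^{n-1}(t^{\onehalf}+t^{-\onehalf})^{-(n-1)}V_L(t)$. Since $D$ is a symmetric union, the link $L$ bounds a ribbon surface of Euler characteristic $n$, so Theorem \ref{thm:RibbonDivisibility} shows that $V_L$ is divisible by $(-t^{\onehalf}-t^{-\onehalf})^{n-1}$, say $V_L = (-t^{\onehalf}-t^{-\onehalf})^{n-1}Q(t)$ with $Q \in \Z[t^{\pm\onehalf}]$. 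Substituting yields $W_D = (-1)^{n-1}(s^{\onehalf}+s^{-\onehalf})^{n-1}Q(t)$, so it remains to check that $Q$ is a Laurent polynomial in $t$ rather than only in $t^{\onehalf}$. This is a parity argument: the exponents of $t$ in $V_L$ all lie in a single residue class modulo $\Z$, determined by the component count $n$, and the exponents of $(t^{\onehalf}+t^{-\onehalf})^{n-1}$ lie in the class of $\frac{n-1}{2}$; these two classes coincide, so the quotient $Q$ has only integer exponents, i.e. $Q \in \Z[t^{\pm1}]$. Hence $W_D \in \Z[t^{\pm1}]\cdot(s^{\onehalf}+s^{-\onehalf})^{n-1}$, as desired.

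For the inductive step, assume $D$ has at least one axis crossing and the claim holds for all symmetric unions with fewer such crossings. Choosing one axis crossing, by Proposition \ref{prop:SymmetricCrossings} we may assume it is positive and apply the recursion \eqref{eq:SUskein1}, writing $W_D = -s^{\onehalf}W(D_h) - s\,W(D_v)$, where the resolution $\aocr \mapsto \avcr$ produces a symmetric union $D_v$ with $n$ components and $\aocr \mapsto \ahcr$ produces a symmetric union $D_h$ with $n+1$ components, both with one fewer crossing on the axis. By the induction hypothesis $W(D_v) \in \Z[s^{\pm1},t^{\pm1}]\cdot(s^{\onehalf}+s^{-\onehalf})^{n-1}$, so the term $-s\,W(D_v)$ already lies in the target set. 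For the other term $W(D_h)\in\Z[s^{\pm1},t^{\pm1}]\cdot(s^{\onehalf}+s^{-\onehalf})^{n}$, and the key algebraic identity $s^{\onehalf}(s^{\onehalf}+s^{-\onehalf}) = s+1 \in \Z[s^{\pm1}]$ converts $-s^{\onehalf}(s^{\onehalf}+s^{-\onehalf})^n$ into $-(s+1)(s^{\onehalf}+s^{-\onehalf})^{n-1}$; thus $-s^{\onehalf}W(D_h)$ also lands in $\Z[s^{\pm1},t^{\pm1}]\cdot(s^{\onehalf}+s^{-\onehalf})^{n-1}$. The negative-crossing case is identical using $s^{-\onehalf}(s^{\onehalf}+s^{-\onehalf}) = 1+s^{-1}$. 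Adding the two terms completes the induction, and specializing to $n=1$ gives the Laurent-polynomial statement for symmetric union knots.

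The main obstacle I expect is the base case, specifically the passage from half-integer to integer powers of $t$. The divisibility of $V_L$ by $(t^{\onehalf}+t^{-\onehalf})^{n-1}$ is granted by Theorem \ref{thm:RibbonDivisibility}, but this only places $Q$ in $\Z[t^{\pm\onehalf}]$; pinning down $Q \in \Z[t^{\pm1}]$ requires the parity bookkeeping relating the residue class of the $t$-exponents of $V_L$ (governed by the component count of $L$) to that of the denominator (governed by $n-1$). Once this parity match is established, the inductive step is purely formal, resting entirely on the identity $s^{\pm\onehalf}(s^{\onehalf}+s^{-\onehalf})=s^{\pm1}+1$, which is precisely what renders the extra component in $D_h$ harmless.
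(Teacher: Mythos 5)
Your proof is correct and takes essentially the same route as the paper's: induction on the number of crossings on the axis, with the base case handled by Equation \eqref{eq:SUskein0} together with Theorem \ref{thm:RibbonDivisibility}, and the inductive step by the recursions \eqref{eq:SUskein1}--\eqref{eq:SUskein2} plus the observation that the vertical resolution keeps $n$ components while the horizontal one has $n+1$. The only difference is that you make explicit two details the paper leaves implicit---the parity argument placing the quotient $Q$ in $\Z[t^{\pm1}]$ rather than merely $\Z[t^{\pm\onehalf}]$, and the identity $s^{\pm\onehalf}(s^{\onehalf}+s^{-\onehalf})=s^{\pm1}+1$ that absorbs the extra factor coming from the $(n+1)$-component resolution.
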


\begin{proof}
  Every symmetric union diagram $D$ represents a ribbon link $L$.
  If $D$ has no crossings on the axis, then the assertion 
  follows from Equation \eqref{eq:SUskein0} and the divisibility 
  is ensured by Theorem \ref{thm:RibbonDivisibility}.
  We can then proceed by induction on the number of crossings
  on the axis, using Equations \eqref{eq:SUskein1} and \eqref{eq:SUskein2}.
  Notice that $\aocr$, $\aucr$, $\avcr$ have the same number 
  of components, whereas $\ahcr$ has one more component.
\end{proof}

\subsection{Special values in $t$} \label{sub:SpecialValues:t}

A few evaluations of the Jones polynomial have been identified
with geometric data, and some of these can be recovered for the $W$-polynomial:

\begin{proposition}
  Let $D$ by a symmetric union diagram with $n$ components.  
  We have $W_D(s,\xi) = (-s^{\onehalf}-s^{-\onehalf})^{n-1}$
  for every $\xi \in \{ 1, \pm i, e^{\pm 2 i \pi / 3 } \}$,
  and $\frac{\partial W_D}{\partial t}(s,1) = 0$.
  % For a symmetric union knot diagram (i.e., in the case $n=1$) 
  % this means that $W_D - 1$ is divisible by $(t^2+t+1)(t^2+1)(t-1)^2$.
\end{proposition}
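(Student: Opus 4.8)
The plan is to prove the statement by induction on the number of crossings on the axis, reducing to Proposition~\ref{prop:NoCrossingsOnAxis}, and there to read off the classical special values of the Jones polynomial. The decisive structural point is that the recursion formulae \eqref{eq:SUskein1} and \eqref{eq:SUskein2} have coefficients depending on $s$ alone. Consequently both operations we care about --- evaluation at a fixed argument $t=\xi$ and differentiation in $t$ at $t=1$ --- commute with the recursion, so the whole argument is uniform in $\xi$.

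For the inductive step, let $D$ be a symmetric union diagram with $n$ components and at least one crossing on the axis, and choose such a crossing. By Proposition~\ref{prop:SymmetricCrossings}(2) its two resolutions $\avcr$ and $\ahcr$ are again symmetric unions, with $n$ and $n+1$ components respectively, and each has one crossing fewer on the axis. Granting the statement for both, equation \eqref{eq:SUskein1} yields
\[
W_D(s,\xi) = -s^{\onehalf}\,(-s^{\onehalf}-s^{-\onehalf})^{\,n} - s\,(-s^{\onehalf}-s^{-\onehalf})^{\,n-1} = (-s^{\onehalf}-s^{-\onehalf})^{\,n-1},
\]
the key identity being $-s^{\onehalf}(-s^{\onehalf}-s^{-\onehalf})-s = 1$; the negative crossing is treated identically via \eqref{eq:SUskein2}. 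Because the coefficients $-s^{\onehalf}$ and $-s$ contain no $t$, applying $\partial/\partial t$ at $t=1$ expresses $\frac{\partial W_D}{\partial t}(s,1)$ as the same $s$-linear combination of the corresponding derivatives of the two resolutions, which vanish by induction; hence it vanishes as well.

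It remains to establish the base case of no crossings on the axis, where Proposition~\ref{prop:NoCrossingsOnAxis} (equivalently \eqref{eq:SUskein0}, with $m=n$) gives
\[
W_D(s,t) = \left(\frac{s^{\onehalf}+s^{-\onehalf}}{t^{\onehalf}+t^{-\onehalf}}\right)^{\,n-1} V_L(t)
\]
for the represented link $L$. Substituting $V_L(\xi) = (-\xi^{\onehalf}-\xi^{-\onehalf})^{\,n-1}$ cancels the $t$-denominator against the sign $(-1)^{n-1}$ and reproduces $(-s^{\onehalf}-s^{-\onehalf})^{n-1}$, so the claim reduces to this single identity for the Jones polynomial at each $\xi$, together with $V_L'(1)=0$ (the prefactor $(t^{\onehalf}+t^{-\onehalf})^{-(n-1)}$ having vanishing $t$-derivative at $t=1$). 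Three of these inputs are classical and need nothing special about $D$: one has $V_L(1)=(-2)^{n-1}$ and $V_L(e^{\pm 2\pi i/3})=(-1)^{n-1}$ for every link (the latter follows from a short induction on crossings, the proposed value being compatible with the oriented skein relation and correct on unlinks), while $V_L'(1)$ is proportional to the total linking number $\sum_{i<j}\lk(L_i,L_j)$, which vanishes for a symmetric union by Proposition~\ref{prop:SymmetricCrossings} and the remark following it.

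The genuinely geometric input, and the main obstacle, is the evaluation at $\xi=i$: here $V_L(i)=(-\sqrt2)^{\,n-1}(-1)^{\arf(L)}$ for a \emph{proper} link, and this value --- unlike those at $1$ and $e^{\pm2\pi i/3}$ --- is not forced by the skein relation alone. Our link $L$ is proper precisely because its pairwise linking numbers vanish, and the Arf contribution drops out because $L$ is a ribbon, hence slice, link (Proposition~\ref{prop:SymmetricRibbonDisks}), so that $\arf(L)=0$. With all four evaluations and the vanishing of $V_L'(1)$ in hand, the base case holds, and the induction above completes the proof.
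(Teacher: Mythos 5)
Your proof is correct and follows essentially the same route as the paper: induction on the number of crossings on the axis, with the inductive step driven by the fact that the normalized skein relations \eqref{eq:SUskein1}--\eqref{eq:SUskein2} have coefficients in $s$ only, and the base case reduced via Proposition~\ref{prop:NoCrossingsOnAxis} to the classical evaluations of the Jones polynomial, using that $\arf(L)$ and the total linking number vanish because $L$ is a ribbon link. The one divergence is in your favor: at $\xi = e^{\pm 2i\pi/3}$ you use the correct general value $V_L(\xi) = (-1)^{n-1}$ for an $n$-component link (and verify its compatibility with the skein relation), whereas the paper quotes the value $1$, which holds only for knots and would produce a sign error for even $n$; the value $(-1)^{n-1}$ is exactly what is needed for the stated conclusion.
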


\begin{proof}
  We proceed by induction on the number of crossings on the axis.
  If $D$ has no crossings on the axis, then we can use
  Equation \eqref{eq:SUskein0} and calculate $W_D(s,t)$ 
  from the Jones polynomial $V_L(t)$.  For the latter we know that
  \begin{align*}
    V_L(1) &= (-2)^{n-1}
    \\
    V_L( e^{\pm 2 i \pi / 3 } ) & = 1
    \\ 
    V_L( \pm i ) &= (-\sqrt{2})^{n-1} (-1)^{\arf(L)}
    \\
    \frac{d V_L}{dt} (1) &= 3 \lk(D) (-2)^{n-1}
  \end{align*}
  (See \cite{LickorishMillet:1986}
  or \cite[Table 16.3]{Lickorish:1997}.)
  Here $\arf(L)$ is the Arf invariant of $L$, 
  and $\lk(D)$ is the total linking number of $L$, 
  i.e., the sum $\sum_{j<k} \lk(L_j,L_k)$ of all pairwise 
  linking numbers between the components $L_1,\dots,L_n$ of $L$.
  Both $\arf(L)$ and $\lk(L)$ vanish because $L$ is a ribbon link.
  The above values of $V_L(\xi)$ thus show that 
  $W_D(s,\xi) = (-s^{\onehalf}-s^{-\onehalf})^{n-1}$ and 
  $\frac{\partial W_D}{\partial t}(s,1) = 0$. % as claimed.

  If $D$ has at least one crossing on the axis, then we can resolve 
  it according to the skein relation \eqref{eq:SUskein1} or \eqref{eq:SUskein2}.
  More explicitly, consider a positive crossing on the axis:
  \renewcommand{\pic}[1]{W{\left(\raisebox{-0.9ex}{\includegraphics[height=3ex]{#1}}\right)}}
  \[
  \pic{cross-o-axis} = -s^{\onehalf} \, \pic{cross-h-axis} - s \, \pic{cross-v-axis}
  \]
  Notice that $\ahcr$ and $\avcr$ are symmetric union diagrams 
  with $n+1$ and $n$ components, respectively.  We can thus apply 
  the induction hypothesis: for $t = \xi$ we find
  \[
  \pic{cross-o-axis} 
  = -s^{\onehalf} \bigl(-s^\onehalf-s^{-\onehalf}\bigr)^{n} 
  - s \bigl(-s^\onehalf-s^{-\onehalf}\bigr)^{n-1}
  = \bigl(-s^\onehalf-s^{-\onehalf}\bigr)^{n-1} 
  \]
  Likewise, 
  \[
  \frac{\partial}{\partial t} \pic{cross-o-axis} 
  = -s^{\onehalf} \frac{\partial}{\partial t} \pic{cross-h-axis} 
  - s \frac{\partial}{\partial t} \pic{cross-v-axis}
  \]
  and for $t = 1$ all three derivatives vanish.
  Analogous arguments hold when we resolve a negative crossing $\aucr$ 
  instead of a positive crossing $\aocr$.  This concludes the induction.
\end{proof}

\subsection{Special values in $s$} \label{sub:SpecialValues:s}

The following specializations in $s$ are noteworthy:

\begin{proposition}
  For every diagram $D$ the specialization $s \mapsto t$ yields the Jones polynomial 
  of the link $L$ represented by the diagram $D$, that is, $W_D(t,t) = V_L(t)$.
\end{proposition}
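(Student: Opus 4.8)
The plan is to recognize that the substitution $s \mapsto t$ is exactly the collapse of our two-variable construction onto Kauffman's original bracket. Under the notation $A^2 = t^{-\onehalf}$ and $B^2 = s^{-\onehalf}$, setting $s = t$ amounts to the identification $B = A$ (both equal to $t^{-\nicefrac{1}{4}}$). First I would verify that, under this identification, each ingredient of Definition \ref{def:TwoVariableBracket} degenerates to the corresponding datum of the ordinary Kauffman bracket, and then check that the $W$-normalization degenerates to the usual writhe normalization.

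The key observation is that skein relation \eqref{eq:SkeinB} becomes identical to \eqref{eq:SkeinA} once $B = A$: the resolution coefficients $B^{\pm 1}$ and $A^{\pm 1}$ coincide, so crossings on and off the axis are now governed by one and the same rule. Next, the circle evaluation \eqref{eq:SkeinC} simplifies, since its second form $(-A^2-A^{-2})^{n-m}(-B^2-B^{-2})^{m-1}$ collapses to $(-A^2-A^{-2})^{n-1}$, which is precisely Kauffman's value for a union of $n$ circles and no longer records the number $m$ of axis intersections. Consequently $\bracket{D}$, evaluated at $B = A$, equals the ordinary Kauffman bracket of the underlying diagram, with the axis forgotten.

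It then remains to reconcile the normalizations. Since the $A$- and $B$-writhes together record the sign of every crossing of $D$, we have $\alpha(D) + \beta(D) = w(D)$, the total writhe. Hence, at $B = A$,
\[
W_D \big|_{B = A} = \bracket{D} \cdot (-A^3)^{-\alpha(D)} \cdot (-A^3)^{-\beta(D)} = \bracket{D} \cdot (-A^3)^{-w(D)},
\]
which is exactly Kauffman's writhe-normalized bracket. Under the substitution $A = t^{-\nicefrac{1}{4}}$ this is by definition the Jones polynomial $V_L(t)$, so that $W_D(t,t) = V_L(t)$ as claimed.

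I do not anticipate a genuine obstacle: the statement is in essence a verification that the two-variable construction specializes correctly, and every step is a direct substitution into the defining skein relations. The only point requiring care is the bookkeeping of variable conventions --- checking that the branch $B = A$ (rather than $B = -A$) is the one induced by $s = t$, and that the resulting single-variable normalization matches Kauffman's original convention $V_L(t) = (-A^3)^{-w(D)}\bracket{D}$ with $A = t^{-\nicefrac{1}{4}}$.
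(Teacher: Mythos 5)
Your proof is correct and is essentially the paper's own argument: the paper's proof notes precisely that setting $B \mapsto A$ collapses the two-variable bracket to Kauffman's bracket, which after writhe normalization and the reparametrization $t = A^{-4}$ gives the Jones polynomial. Your version just spells out the details (skein \eqref{eq:SkeinB} merging with \eqref{eq:SkeinA}, the circle evaluation collapsing to $(-A^2-A^{-2})^{n-1}$, and $\alpha(D)+\beta(D)=w(D)$), which the paper leaves implicit.
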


\begin{proof}
  For $s \mapsto t$ we no longer distinguish the crossings on the axis,
  and the above skein relations become the well-known axioms
  for the Jones polynomial, thus $W_D(t,t) = V_K(t)$.

  Another way to see this is to start from our two-variable bracket polynomial.  
  For $B \mapsto A$ this becomes Kauffman's bracket polynomial in one variable $A$.  
  Suitably normalized and reparametrized with $t = A^{-4}$ 
  it yields the Jones polynomial, as desired.
\end{proof}

\begin{proposition} \label{prop:ProductFormula}
  If $D$ is the symmetric union knot diagram with partial knots $K_-$ and $K_+$,
  then the specialization $s \mapsto -1$ yields $W_D(-1,t) = V_{K_-}(t) \cdot V_{K_+}(t)$.
  If $D$ is a symmetric union link diagram with $n \ge 2$ components, then $W_D(-1,t) = 0$.
\end{proposition}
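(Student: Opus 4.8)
The plan is to prove the two cases in the reverse of the stated order, since the multi-component vanishing feeds directly into the induction for the knot case. I would first dispose of the case $n \ge 2$ using strong integrality: by Corollary~\ref{cor:StrongIntegrality} we have $W_D \in \Z[s^{\pm1},t^{\pm1}] \cdot (s^{\onehalf}+s^{-\onehalf})^{n-1}$, so the only fact needed is that $s^{\onehalf}+s^{-\onehalf}$ \emph{vanishes} at $s=-1$: taking $s^{\onehalf}=i$ gives $s^{\onehalf}+s^{-\onehalf}=i-i=0$. Hence for $n\ge 2$ the factor $(s^{\onehalf}+s^{-\onehalf})^{n-1}$ annihilates $W_D$ at $s=-1$, so $W_D(-1,t)=0$.

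For the knot case ($n=1$) I would induct on the number of crossings on the axis. In the base case $D$ has no axis crossings, hence represents the connected sum $K_+ \sharp K_-$; by Equation~\eqref{eq:SUskein0} the prefactor $\bigl((s^{\onehalf}+s^{-\onehalf})/(t^{\onehalf}+t^{-\onehalf})\bigr)^{n-1}$ is trivial, so $W_D(s,t)=V_L(t)$, and multiplicativity of the Jones polynomial under connected sum gives $V_L=V_{K_-}\cdot V_{K_+}$. Being independent of $s$, this already yields $W_D(-1,t)=V_{K_-}(t)\cdot V_{K_+}(t)$.

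For the inductive step I would choose one axis crossing and resolve it by the normalized skein relation~\eqref{eq:SUskein1} (or~\eqref{eq:SUskein2} if it is negative). By Proposition~\ref{prop:SymmetricCrossings}(2) the vertical resolution $\avcr$ is a symmetric union knot with one fewer axis crossing, whereas the horizontal resolution $\ahcr$ is a symmetric union with two components. Setting $s=-1$, the horizontal term vanishes by the multi-component case just established, so only the vertical term survives; its coefficient ($-s$ in the positive case, $-s^{-1}$ in the negative case) equals $1$ at $s=-1$. The step is completed by noting that resolving $\aocr\mapsto\avcr$ is exactly the first operation in the \emph{definition} of the partial knots, so $\avcr$ carries the same partial knots $K_\pm$ as $D$; by the induction hypothesis its value at $s=-1$ is $V_{K_-}(t)\cdot V_{K_+}(t)$, which is the asserted formula.

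The one point demanding genuine care --- the hard part --- is that the naive substitution $s=-1$ into the skein relation forces the branch value $s^{\onehalf}=\pm i$, and hence imaginary coefficients in front of the horizontal term. The decisive observation is that these awkward coefficients multiply \emph{only} the component-increasing resolution $\ahcr$, which is killed at $s=-1$ by strong integrality. It is exactly this interplay between the skein recursion and divisibility by $(s^{\onehalf}+s^{-\onehalf})^{n-1}$ that renders the imaginary factors harmless and makes the product formula drop out cleanly.
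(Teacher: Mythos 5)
Your proof is correct, and its engine is the same as the paper's: induction on the number of crossings on the axis, with base case \eqref{eq:SUskein0} and inductive step given by the normalized skein relations \eqref{eq:SUskein1}--\eqref{eq:SUskein2}, the key point being that at $s=-1$ the component-increasing resolution $\ahcr$ dies while the vertical resolution $\avcr$ survives with coefficient $1$ and unchanged partial knots. The structural difference lies in where the vanishing for $n\ge2$ components comes from. The paper proves both assertions in a single simultaneous induction: for $n\ge2$ the base case already vanishes at $s=-1$ because the prefactor $\bigl((s^{\onehalf}+s^{-\onehalf})/(t^{\onehalf}+t^{-\onehalf})\bigr)^{n-1}$ in \eqref{eq:SUskein0} is zero there, and this vanishing propagates up the induction and is exactly what kills the $\ahcr$ terms in the knot case --- no integrality statement is invoked anywhere. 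You instead settle the $n\ge2$ case upfront by quoting Corollary~\ref{cor:StrongIntegrality}, which is legitimate (it is established before the proposition, so there is no circularity) but imports a much heavier tool: strong integrality rests on the ribbon-divisibility Theorem~\ref{thm:RibbonDivisibility} of \cite{Eisermann:2009}, whereas the product formula does not actually need it. What your decomposition buys is expository cleanness --- the multi-component vanishing becomes a one-liner, and the knot-case induction then only ever deals with the objects named in the statement --- and your explicit treatment of the branch ambiguity $s^{\onehalf}=\pm i$ addresses a point the paper glosses over (one can even avoid it entirely by noting that $-s^{\onehalf}\,W(\ahcr)$ lies in $\Z[s^{\pm1},t^{\pm1}]\cdot(s+1)$, so no half-integer powers of $s$ survive). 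What the paper's route buys is self-containedness: the same elementary skein induction delivers both statements at once.
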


\begin{proof}
  The specialization $s \mapsto -1$ means that $s^\onehalf + s^{-\onehalf} = 0$.
  We can now proceed by induction on the number $c$ of crossings on the axis.
  If $c=0$ then the assertion follows from Equation \ref{eq:SUskein0}.
  If $c \ge 1$ then the skein relations \ref{eq:SUskein1} and \ref{eq:SUskein2}, 
  specialized at $s=-1$, show that $W(\aocr) = W(\aucr) = W(\avcr)$.
  This operation reduces $c$ but preserves the number $n$ of components.
  For $n=1$ it also preserves the partial knots $K_\pm$.
  % We end up with the connected sum $K_- \sharp K_+$.
\end{proof}

\begin{corollary}
  Suppose that $D$ is the symmetric union knot diagram of two partial knots $K_-$ and $K_+$.
  For $s = t = -1$ we obtain $W_D(-1,-1) = \det(K) = \det(K_-) \cdot \det(K_+)$.
\end{corollary}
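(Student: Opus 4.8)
The plan is to obtain both equalities by specializing the two identities already established in this subsection, each at the remaining variable. First I would invoke Corollary \ref{cor:StrongIntegrality} (strong integrality): since $D$ is a symmetric union \emph{knot} diagram (so $n=1$), the polynomial $W_D$ lies in $\Z[s^{\pm1},t^{\pm1}]$. Hence the value $W_D(-1,-1)$ is an unambiguous integer and may be computed along either coordinate direction, without worrying about the branches of $s^{\onehalf}$ or $t^{\onehalf}$.

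For the first equality I would use the diagonal specialization $W_D(t,t)=V_K(t)$ established just above, and evaluate it at $t=-1$, giving $W_D(-1,-1)=V_K(-1)$. It then remains to recall the classical identification of the Jones polynomial at $t=-1$ with the knot determinant, $V_K(-1)=\det(K)$, under the usual convention $\det(K)=|\,\Delta_K(-1)\,|$ together with $V_K(-1)=\Delta_K(-1)$.

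For the second equality I would instead set $t=-1$ in the product formula of Proposition \ref{prop:ProductFormula}, namely $W_D(-1,t)=V_{K_-}(t)\cdot V_{K_+}(t)$. Evaluating at $t=-1$ yields $W_D(-1,-1)=V_{K_-}(-1)\cdot V_{K_+}(-1)=\det(K_-)\cdot\det(K_+)$, again using $V(-1)=\det$ for each partial knot. Since the left-hand side is the same well-defined integer in both computations, the chain $\det(K)=W_D(-1,-1)=\det(K_-)\cdot\det(K_+)$ follows at once; this recovers, through the $W$-polynomial, the multiplicativity of the determinant expected from the connected-sum structure $K_+\sharp K_-$ underlying a symmetric union.

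The argument is essentially a double specialization of results already in hand, so I do not expect any serious obstacle. The only point requiring care is the standard normalization at $t=-1$: one must fix the sign convention relating $V(-1)$ to $\det$ consistently for $K$, $K_-$, and $K_+$, after which the two evaluations automatically agree because $W_D(-1,-1)$ is a single well-defined integer.
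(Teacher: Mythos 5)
Your proposal is correct and is essentially the paper's own argument: the paper phrases it as a commutative square of specializations $(s\mapsto t,\ t\mapsto -1)$ versus $(s\mapsto -1,\ t\mapsto -1)$, whose commutativity is guaranteed precisely by the integrality $W_D\in\Z[s^{\pm1},t^{\pm1}]$ of Corollary \ref{cor:StrongIntegrality} — the same point you make when you note that $W_D(-1,-1)$ is a single well-defined integer computable along either coordinate direction. You also correctly identify the two inputs (the diagonal specialization $W_D(t,t)=V_K(t)$ and the product formula of Proposition \ref{prop:ProductFormula}) and the only delicate point, namely that this evaluation-order independence genuinely requires integrality, as the paper's subsequent $8_5$ example shows it can fail for rational $W_D$.
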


\begin{proof}
  The evaluations are subsumed in the following commutative diagram:
  \begin{equation} \label{eq:EvaluationSquare}
    \begin{CD}
      W_D(s,t) \in @. \Z[s^{\pm1},t^{\pm1}] @>{s \mapsto t}>> \Z[t^{\pm1}] @. \ni V_K(t) \\
      @. @V{s \mapsto -1}VV @VV{t \mapsto -1}V @. \\
      V_{K_-}(t) \cdot V_{K_+}(t) \in @. \Z[t^{\pm1}] @>>{t \mapsto -1}> \Z @. \ni \det(K)
    \end{CD}
  \end{equation}

  On the one hand, substituting first $s=-1$ and then $t=-1$ yields $\det(K_-) \cdot \det(K_+)$.
  On the other hand, substituting first $s=t$ and then $t=-1$ yields $\det(K)$.
  The equality $\det(K) = \det(K_-) \det(K_+)$ now follows from $W_D \in \Z[s^{\pm1},t^{\pm1}]$, 
  the integrality property of Corollary \ref{cor:StrongIntegrality}, 
  which ensures the commutativity of Diagram \eqref{eq:EvaluationSquare}.
\end{proof}

The product formula $\det(K) = \det(K_-) \cdot \det(K_+)$ was first proven 
by Kinoshita and Terasaka \cite{KinoshitaTerasaka:1957} in the special case 
that they considered; the general case has been established by Lamm \cite{Lamm:2000}.
We derive it here as a consequence of the more general product formula
for the Jones polynomial established in Proposition \ref{prop:ProductFormula}.

\begin{example}
  The symmetric union diagram of \fref{fig:SymVsAsym}a
  represents the knot $8_{20}$ with partial knots $3_1$ and $3_1^*$.  
  Here we find
  \begin{align*}
    W(s,t) & = 1 - s^{-2} + s^{-2}(t+t^3-t^4)(t^{-1}+t^{-3}-t^{-4}) ,
    \\
    W(t,t) & = V(8_{20}) = -t^{-5}+t^{-4}-t^{-3}+2t^{-2}-t^{-1}+2-t ,
    \\
    W(-1,t) & = V(3_1) \cdot V(3_1^*) = (t+t^3-t^4)(t^{-1}+t^{-3}-t^{-4}) .
  \end{align*}
  In particular $W$ has no denominator and
  is thus an honest Laurent polynomial in $s$ and $t$.
  As it must be, for $t=-1$ the last two polynomials 
  both evaluate to $W(-1,-1) = 9$.
\end{example}

\begin{figure}[hbtp]
  \centering
  \hfill
  \subfigure[$8_{20}$ as symmetric union]{\includegraphics[width=24ex]{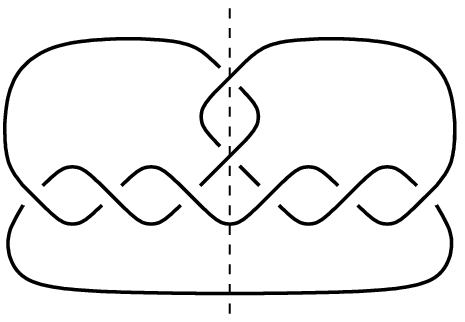}}
  \hfill
  \subfigure[$8_5$ as asymmetric union]{\includegraphics[width=24ex]{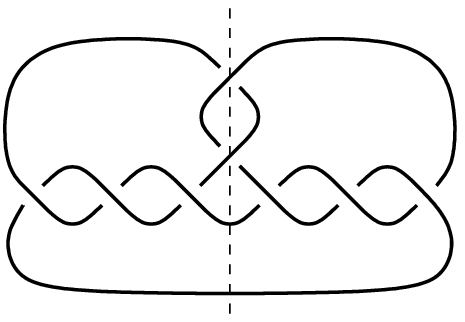}}
  \hfill{}
  \caption{Symmetric vs asymmetric union diagrams} \label{fig:SymVsAsym}
\end{figure} 

\begin{example}
  We should point out that the integrality of $W_D(s,t)$ is a crucial ingredient:
  The asymmetric union depicted in \fref{fig:SymVsAsym}b represents the knot $8_5$ 
  with partial knots $3_1$ and $3_1$.  The lack of symmetry 
  is reflected by a non-trivial denominator in the $W$-polynomial:
  \[
  W(s,t) = \frac{t^{9}-t^{8}+t^{7}-t^{6}+t^{5}+t^{3}
  + s^{-2}(-2t^{7}-t^{5}+2t^{4}+t^{2}) }{t+1} .
  \]
  From this we can recover the Jones polynomial
  \[
  W(t,t) = V(8_5) = 1-t+3t^{2}-3t^{3}+3t^{4}-4t^{5}+3t^{6}-2t^{7}+t^{8}
  \]
  and the determinant $\det(8_5) = 21$.
  If we first set $s = -1$, however, we find 
  the product $W(-1,t) = V(3_1) \cdot V(3_1)$,
  and for $t =  -1$ this evaluates to $\det(3_1) \cdot \det(3_1) = 9$.
\end{example}

This example shows that the evaluation of $W(-1,-1)$ 
is in general not independent of the order of specializations. 
In other words, Diagram \eqref{eq:EvaluationSquare} does \emph{not} 
necessarily commute when we consider rational fractions $W_D \in \Z(s,t)$.
For every diagram $D$ both specializations $W_D(t,t)$ and $W_D(-1,t)$
are Laurent polynomials in $\Z[t^{\pm1}]$.  In $(-1,-1)$ the rational 
function $\R^2 \to \R$ defined by $(s,t) \mapsto W_D(s,t)$ thus has limits 
\[
\lim_{t \to -1} W_D(t,t) = \det(K) 
\quad\text{and}\quad
\lim_{t \to -1} W_D(-1,t) = \det(K_+) \det(K_-) .
\]

If $W_D$ is continuous in $(-1,-1)$ then these two limits co\"incide;
otherwise they may differ, in which case $\det(K) \ne \det(K_-) \cdot \det(K_+)$
as in the preceding example.

% Our example shows that this problem really occurs for some asymmetric unions.
% for symmetric unions it never occurs due to the integrality of $W_D$.
% In particular we cannot expect the multiplicativity $\det(K) = \det(K_-) \cdot \det(K_+)$ 
% to hold for asymmetric diagrams: it is a consequence of the integrality of $W_D$ for symmetric diagrams.

% \begin{corollary}
%   Suppose that $K$ is a (not necessarily symmetric) union of two knots $K_-$ and $K_+$,
%   of which at least one has non-trivial determinant.  Then $K$ is non-trivial.
% \end{corollary}
%
% \begin{proof}
%   The assertion is clear if the product formula holds, since $\det(K) \ne 1$.
%   When the product formula fails that $W_D$ must have a denominator $(t+1)$,
%   in particular $W_D$ is non-trivial.  Can we conclude that $W_D(t,t)$ is non-trivial?
% \end{proof}

%%%%%%%%%%%%%%%%%%%%%%%%%%%%%%%%%%%%%%%%%%%%%%%%%%%%%%%%%%%%%%%%%%%%%%%%%%%%%

\section{Examples and applications} \label{sec:Examples}

In this final section we present the computation of some $W$-polynomials.
We begin with preliminaries on alternating knots (\sref{sub:AlternatingKnots})
and a computational lemma (\sref{sub:Computation}).  We then calculate 
the $W$-polynomials of symmetric union diagrams for all ribbon knots 
up to $10$ crossings (\sref{sub:SmallRibbonKnots}) 
and analyze two infinite families of symmetric union diagrams 
for two-bridge ribbon knots (\sref{sub:TwoBridgeRibbonKnots}).

\begin{notation}
  Certain polynomials occur repeatedly in the following calculations.
  In order to save space we will use the abbreviation $u = -s^{\onehalf} - s^{-\onehalf}$ 
  and the auxiliary polynomials $e(t), f(t), \dots$ defined in 
  Table \ref{tab:AuxiliaryPolynomials} on page \pageref{tab:AuxiliaryPolynomials}.
\end{notation}

\subsection{Alternating knots} \label{sub:AlternatingKnots}

A non-trivial symmetric union knot diagram is never alternating.
To see this, start from a point where the knot perpendicularly 
traverses the axis and then travel symmetrically in both directions:
the first crossing-pair is mirror-symmetric and thus non-alternating.

If a knot $K$ admits a reduced alternating diagram with $c$ crossings
then $c$ is the minimal crossing number and every minimal diagram 
representing $K$ with $c$ crossings is necessarily reduced and alternating 
\cite{Kauffman:1987,Murasugi:1987,Thistlethwaite:1987,Turaev:1987}.
This implies the following observation:

\begin{proposition}
  Let $K$ be a prime alternating knot with $c$ crossings.
  If $K$ can be represented by a symmetric union diagram,
  then at least $c+1$ crossings are necessary.
  \qed
\end{proposition}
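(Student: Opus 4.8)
The plan is to argue by contradiction using the two facts recalled just above the statement. Suppose $K$ is a prime alternating knot with crossing number $c$, and suppose it admits a symmetric union diagram $D$ with at most $c$ crossings. Since any diagram of $K$ has at least $c$ crossings (the crossing number is minimal for reduced alternating diagrams), $D$ must have exactly $c$ crossings, so $D$ is itself a minimal diagram of $K$.

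The key structural fact is the Kauffman--Murasugi--Thistlethwaite--Turaev theorem cited in the excerpt: for a knot admitting a reduced alternating diagram with $c$ crossings, every minimal diagram with $c$ crossings is necessarily reduced and alternating. I would apply this to $D$: being a minimal $c$-crossing diagram of the prime alternating knot $K$, the diagram $D$ must be alternating.

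This is exactly the contradiction I am after, because of the first observation in \sref{sub:AlternatingKnots}: a non-trivial symmetric union knot diagram is never alternating. The argument there is that one starts at a perpendicular traversal of the axis and travels symmetrically outward in both directions; the first pair of crossings encountered is a mirror-symmetric pair, and such a pair forces two consecutive over- (or under-) crossings along the strand, violating the alternating condition. Since $K$ is non-trivial, $D$ is a non-trivial symmetric union diagram, hence non-alternating, contradicting the previous paragraph. Therefore $D$ cannot have $c$ or fewer crossings, so at least $c+1$ crossings are necessary.

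The only point requiring care is the genuinely non-trivial input, namely the cited equivalence between minimality and reduced-alternating form; everything else is bookkeeping. I would also note that primeness of $K$ ensures we are in the clean situation where this rigidity theorem applies without the subtleties that arise for composite knots. The main obstacle, then, is not in the logical structure but in correctly invoking the minimal-diagram characterization and confirming that the non-alternating observation applies to $D$ as stated (i.e.\ that $D$ is non-trivial, which follows from $K$ being non-trivial).
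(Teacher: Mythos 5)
Your proof is correct and follows essentially the same route as the paper: combine the observation that a non-trivial symmetric union diagram is never alternating with the Kauffman--Murasugi--Thistlethwaite--Turaev rigidity result that every minimal diagram of a prime alternating knot is alternating. The contradiction argument you spell out is exactly the reasoning the paper compresses into the two paragraphs preceding the proposition.
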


This explains why in most of our examples 
the symmetric union representations require
slightly more crossings then the (more familiar)
minimal crossing representations.
This argument no longer holds for non-alternating knots:
the example $8_{20}$ in \fref{fig:RibbonSingularity} shows that 
a symmetric union diagram can realize the minimal crossing number.

In the context of alternating diagrams, the span of the Jones polynomial 
turned out to be a fundamental tool and has thus been intensively studied.  

\begin{proposition}
  Let $D$ be a symmetric union diagram with 
  $n$ components having $2c$ crossings off the axis.
  Then the $t$-span of $W_D$ is at most $2c+1-n$.
  It is equal to $2c$ if and only if $n=1$ and 
  the partial diagrams $D_\pm$ are alternating 
  so that $\operatorname{span} V(K_\pm) = c$.
\end{proposition}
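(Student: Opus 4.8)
The plan is to establish both assertions by relating $W_D$ to the Jones polynomials of the partial knots via the specialization $s \mapsto -1$ and the product formula of Proposition \ref{prop:ProductFormula}. First I would recall the structure of a symmetric union diagram $D$ with $2c$ crossings off the axis: by symmetry these crossings come in mirror pairs, so each partial diagram $D_\pm$ carries exactly $c$ crossings. The key observation is that for a symmetric union the partial diagrams $D_-$ and $D_+$ are mirror images of one another, hence $\operatorname{span} V(K_+) = \operatorname{span} V(K_-)$, and the product $W_D(-1,t) = V_{K_-}(t) \cdot V_{K_+}(t)$ has $t$-span equal to $2\operatorname{span} V(K_+)$ whenever $n = 1$.

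The upper bound is the heart of the first claim. I would argue that the $t$-degrees appearing in $W_D(s,t)$ are already controlled by the crossings off the axis, since the recursion formulae \eqref{eq:SUskein1} and \eqref{eq:SUskein2} resolve on-axis crossings using only powers of $s$ (no $t$ appears). Thus the $t$-span of $W_D$ is governed entirely by the off-axis skein relation \eqref{eq:SkeinA} together with the circle evaluation, exactly as in the classical bound for the Kauffman bracket. The standard span estimate for a connected link diagram with $k$ crossings gives a Jones-span at most $k$ with a correction involving the number of components; applied here with $k = 2c$ off-axis crossings and the circle-counting that produces the factor $(s^\onehalf + s^{-\onehalf})^{n-1}$, one expects a bound of the form $2c + 1 - n$. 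I would make this precise by tracking, through the state-sum expansion, the extremal $t$-degrees contributed by the all-$A$ and all-$B$ smoothings of the $2c$ off-axis crossings, and showing the number of resulting loops differs from the extremal case by a defect measured by $n$.

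For the equality case I would invoke the theory of alternating diagrams cited in \sref{sub:AlternatingKnots}. The span $\operatorname{span} V(K) = c$ is achieved precisely when the underlying diagram is reduced and alternating (Kauffman, Murasugi, Thistlethwaite, Turaev). So if $n = 1$ and each partial diagram $D_\pm$ is reduced alternating with $c$ crossings, then $\operatorname{span} V(K_\pm) = c$, and since $W_D(-1,t) = V_{K_-}(t)\cdot V_{K_+}(t)$ has span $2c$, the span of $W_D$ itself (as a polynomial in $t$) must be at least $2c$; combined with the upper bound $2c + 1 - n = 2c$ this forces equality. Conversely, if the span equals $2c$, the specialization $s\mapsto -1$ cannot drop the $t$-degree, which forces $n = 1$ and forces each $V(K_\pm)$ to attain the maximal span $c$, hence $D_\pm$ alternating by the converse direction of the Kauffman–Murasugi–Thistlethwaite theorem.

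The main obstacle I anticipate is making the upper bound $2c + 1 - n$ genuinely tight and correctly bookkeeping the loop count. The subtlety is that the on-axis crossings, while not contributing to the $t$-span, do affect how the off-axis smoothings glue into loops, so the extremal-state analysis must be carried out on the full symmetric union diagram rather than on the partial diagrams in isolation. One must verify that the extremal $t$-coefficients do not accidentally cancel — a cancellation that can only be ruled out, in the equality case, by the alternating hypothesis via the non-vanishing of the extreme Jones coefficients. I expect the cleanest route is to reduce everything to the known span statement for $V(K_\pm)$ using the product formula, so that the delicate extremal-coefficient argument is imported wholesale from the alternating-knot literature rather than re-derived.
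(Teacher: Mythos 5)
Your overall route coincides with the paper's, whose proof is a one-line citation of Proposition \ref{prop:SymmetricSkein} together with the Kauffman--Murasugi--Thistlethwaite span results: the on-axis recursion \eqref{eq:SUskein1}--\eqref{eq:SUskein2} involves only powers of $s$, so everything reduces to diagrams without on-axis crossings, where \eqref{eq:SUskein0} applies and the classical span bound is imported. Two remarks on your version of the upper bound. First, the correction $1-n$ does not come from the circle count producing the factor $(s^{\onehalf}+s^{-\onehalf})^{n-1}$; it comes from the denominator in \eqref{eq:SUskein0}: each resolved diagram has $n'\ge n$ components, the classical bound gives $\operatorname{span} V_{L'}\le 2c$ for a connected resolution, and dividing by $(t^{\onehalf}+t^{-\onehalf})^{n'-1}$ lowers the $t$-span by $n'-1\ge n-1$. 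Second, you never actually derive the bound (``one expects a bound of the form $2c+1-n$''), and your appeal to the span estimate ``for a connected link diagram'' silently assumes the resolved diagrams are connected, which can fail when $n\ge 2$; the paper glosses this too, but a complete write-up must address it. Your ``if'' direction of the equality statement is correct and clean: specializing $s\mapsto -1$ can only delete $t$-monomials, so $\operatorname{span}_t W_D \ge \operatorname{span} W_D(-1,t) = \operatorname{span}\bigl(V_{K_-}V_{K_+}\bigr) = 2c$ when $D_\pm$ are reduced alternating, and the upper bound then forces equality.

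The genuine gap is in the converse. You assert that if $\operatorname{span}_t W_D = 2c$ then ``the specialization $s\mapsto -1$ cannot drop the $t$-degree'', and everything downstream ($\operatorname{span} V(K_\pm)=c$, hence $D_\pm$ alternating) rests on this assertion. But that is precisely the nontrivial point, and it is unjustified: by the $t\leftrightarrow t^{-1}$ symmetry the extreme degree is $t^{c}$, and its coefficient in $W_D$ is a Laurent polynomial $p(s)$ about which you know nothing that excludes $p(-1)=0$; if $p(-1)=0$ then $\operatorname{span} W_D(-1,t) < \operatorname{span}_t W_D$ and your chain of implications breaks. Specialization gives the inequality $\operatorname{span} W_D(-1,t)\le\operatorname{span}_t W_D$ for free --- the direction you correctly used for ``if'' --- whereas the converse needs the opposite inequality at the extreme degree, i.e.\ an argument that the coefficient of $t^{\pm c}$ survives at $s=-1$. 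That requires analyzing which on-axis resolutions in \eqref{eq:SUskein1}--\eqref{eq:SUskein2} can contribute $t^{\pm c}$ (only the all-vertical one keeps $n_\sigma=1$; resolutions with horizontal smoothings contribute through split pieces) and checking that their $s$-powers cannot conspire to cancel. This is exactly the ``accidental cancellation'' you flag in your final paragraph, but you defer it rather than resolve it, and importing the alternating-diagram literature does not settle it because the cancellation in question happens in the new variable $s$, not in $t$. A minor point in the same direction: $n=1$ in the converse follows from the already-established upper bound $2c\le 2c+1-n$, not from the specialization claim.
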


\begin{proof}
  The assertion follows from Proposition \ref{prop:SymmetricSkein}
  and the known property of the span of the Jones polynomial
  \cite{Kauffman:1987,Murasugi:1987,Thistlethwaite:1987,Turaev:1987}.
\end{proof}

\begin{proposition} \label{prop:Span:s}
  Suppose that $D$ is a symmetric union diagram with $n$ components
  having $c_+$ positive crossings and $c_-$ negative crossings on the axis.
  Then the degree in $s$ ranges (at most) from 
  $-\frac{n-1}{2} - c_-$ to $\frac{n-1}{2} + c_+$.
\end{proposition}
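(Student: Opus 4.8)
The plan is to argue by induction on the total number $c = c_+ + c_-$ of crossings on the axis, feeding off the recursive calculation of Proposition~\ref{prop:SymmetricSkein}. Throughout let $d^+(D)$ and $d^-(D)$ denote the largest and smallest exponents of $s$ occurring in $W_D$; the goal is to show
\[
d^+(D) \le \tfrac{n-1}{2} + c_+ \qquad\text{and}\qquad d^-(D) \ge -\tfrac{n-1}{2} - c_- .
\]

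For the base case $c=0$, Equation~\eqref{eq:SUskein0} gives $W_D = (s^{\onehalf}+s^{-\onehalf})^{n-1}\cdot V_L(t)/(t^{\onehalf}+t^{-\onehalf})^{n-1}$. Since $V_L(t)$ carries no $s$, the entire $s$-dependence sits in the factor $(s^{\onehalf}+s^{-\onehalf})^{n-1}$, whose exponents range exactly from $-\tfrac{n-1}{2}$ to $\tfrac{n-1}{2}$. This is the claim for $c_+ = c_- = 0$. (Note that even for the knot case $n=1$ the induction forces one through all values of $n$, since resolving a crossing raises the component count.)

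For the inductive step, resolve one crossing on the axis. By Proposition~\ref{prop:SymmetricCrossings}(1) the types $\aocr$ and $\aucr$ are precisely the positive and negative crossings, so the two cases are governed by \eqref{eq:SUskein1} and \eqref{eq:SUskein2}. Take a positive crossing, governed by
\[
W(\aocr) = -s^{+\onehalf}\, W(\ahcr) - s^{+1}\, W(\avcr) .
\]
By Proposition~\ref{prop:SymmetricCrossings}(2) the summand $\avcr$ is a symmetric union with $n$ components and $(c_+ - 1, c_-)$ axis crossings, while $\ahcr$ has $n+1$ components and $(c_+ - 1, c_-)$ axis crossings. Applying the induction hypothesis and then the shift induced by the prefactors, the $\avcr$-term contributes $s$-exponents in $[-\tfrac{n-1}{2} - c_- + 1,\ \tfrac{n-1}{2} + c_+]$ (a full shift $+1$), and the $\ahcr$-term in $[-\tfrac{n-1}{2} - c_-,\ \tfrac{n-1}{2} + c_+]$ (a half shift $+\tfrac12$), where the rise in component count is absorbed by the half shift via $-\tfrac{n}{2} + \tfrac12 = -\tfrac{n-1}{2}$ and $\tfrac{n}{2} - 1 + \tfrac12 = \tfrac{n-1}{2}$. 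The union of both ranges is again $[-\tfrac{n-1}{2} - c_-,\ \tfrac{n-1}{2} + c_+]$, which completes the step; the negative-crossing case is entirely symmetric, using \eqref{eq:SUskein2} with prefactors $-s^{-\onehalf}$ and $-s^{-1}$.

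The arithmetic is elementary, and the one point that genuinely needs care---the crux of the argument---is the bookkeeping that makes the two resolution branches compatible: the $\avcr$-branch keeps the component count but pays a full integer shift, whereas the $\ahcr$-branch raises the component count by one but pays only a half shift, and the bound holds precisely because these two effects cancel and both branches land on the same extremal exponent. Finally, one should stress that these are only inequalities (``at most''): cancellation among the summands may strictly shrink the actual span, so the stated interval is an envelope rather than an exact $s$-degree.
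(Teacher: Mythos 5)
Your proof is correct and takes essentially the same route as the paper: the base case $c_+=c_-=0$ via Equation~\eqref{eq:SUskein0}, followed by induction on the number of axis crossings using the skein relations \eqref{eq:SUskein1} and \eqref{eq:SUskein2}. The paper states this argument in two sentences without details; your explicit bookkeeping (the full shift on the $n$-component branch versus the half shift absorbing the extra component on the $(n{+}1)$-component branch) is exactly the verification the paper leaves implicit.
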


\begin{proof}
  If $c_+ = c_- = 0$ then the assertion 
  follows from Equation \eqref{eq:SUskein0}.
  We conclude by induction using Equations 
  \eqref{eq:SUskein1} and \eqref{eq:SUskein2}.
\end{proof}

\subsection{A computational lemma} \label{sub:Computation}

As an auxiliary result, we study the effect on $W(D)$ of inserting 
$k$ consecutive crossings and $r$ necklaces on the axis:
the resulting diagram $D_{k,r}$ is shown in \fref{fig:necklace_tangles}.
A positive twist number $k$ stands for crossings of type $\aocr$
and a negative $k$ for crossings of type $\aucr$ because 
both orientations either point from left to right or both point from right to left.

\begin{figure}[htbp]
  \centering
  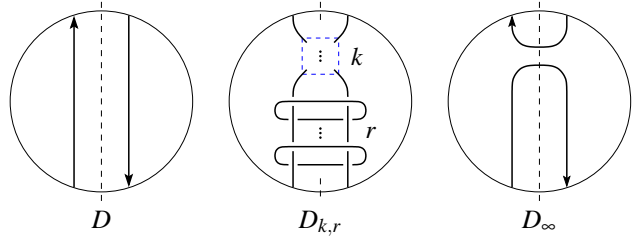
  \caption{The insertion of $k$ crossings and of $r$ necklaces}
  \label{fig:necklace_tangles}
\end{figure}

We assume that $D = D_{0,0}$ is a symmetric union diagram with $n$ components.
By Proposition \ref{prop:Integrality} we can write $W(D) = u^{n-1} \bigl( 1 + d(s,t) \bigr)$
for some polynomial $d(s,t) \in \Z[s^{\pm1},t^{\pm1}]$.

\begin{lemma} \label{lem:Twists}
  If $D_\infty$ is the trivial $(n+1)$-component link then 
  \[
  W_{k,r}(s,t) = u^{n+r-1} \bigl[ 1 
  + (-s)^{k} \cdot (t-1+t^{-1})^r \cdot d(s,t) \bigr] .
  \]
\end{lemma}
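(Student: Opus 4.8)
The plan is to analyze how the $W$-polynomial behaves under the two independent local modifications that produce $D_{k,r}$ from $D = D_{0,0}$: inserting $k$ crossings on the axis, and inserting $r$ necklaces on the axis. Since these insertions happen at the same location but the recursion formulae are local, I would treat the two effects separately and then combine them. The starting point is the decomposition $W(D) = u^{n-1}(1 + d(s,t))$ given by Proposition \ref{prop:Integrality}, together with the hypothesis that $D_\infty$ — the diagram obtained by the horizontal resolution $\ahcr$ at the insertion site — is the trivial $(n+1)$-component link, so that by Equation \eqref{eq:SUskein0} we have $W(D_\infty) = u^{n}$ (the trivial link has Jones polynomial $V_L(t) = (-t^{\onehalf}-t^{-\onehalf})^{n}$, which together with the prefactor gives exactly $u^{n}$).

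First I would handle the twist insertion (the case $r=0$). The key identity is that resolving a single positive crossing via Equation \eqref{eq:SUskein1} gives $W(\aocr) = -s^{\onehalf}W(\ahcr) - s\,W(\avcr)$, where the vertical resolution $\avcr$ returns the original tangle and the horizontal resolution $\ahcr$ caps off into $D_\infty$. Writing $P_k := W(D_{k,0})$, this yields a linear recursion $P_k = -s\,P_{k-1} - s^{\onehalf}\cdot u^{n}$ for $k \ge 1$ (with a mirror-image recursion for negative $k$ using Equation \eqref{eq:SUskein2}). I would solve this inhomogeneous recursion: the fixed point is $P^* = -s^{\onehalf}u^{n}/(1+s)$, which simplifies using $u = -s^{\onehalf}-s^{-\onehalf}$ so that $u^{n} = (-1)^n s^{-n/2}(1+s)^n$, making the fixed point clean; the homogeneous part contributes $(-s)^k$ times the deviation of $P_0 = u^{n-1}(1+d)$ from $P^*$. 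Collecting terms should give $P_k = u^{n-1}[1 + (-s)^k d(s,t)]$, confirming the twist factor $(-s)^k$ multiplies $d$ while the constant term $1$ is unaffected.

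Next I would handle the necklace insertion (the case $k=0$). A single necklace inserted on the axis can be evaluated by the same skein machinery: resolving the necklace's crossings should express $W(D_{0,r})$ in terms of $W(D_{0,r-1})$ and an $(n+r)$-component trivial piece, producing a recursion whose net effect multiplies the ambient component count by raising $u$ to $u^{n+r-1}$ and multiplying $d$ by the per-necklace factor $(t-1+t^{-1})^r$. The factor $(t-1+t^{-1})$ is precisely what arises when a necklace is resolved and its contribution computed against the circle evaluation \eqref{eq:SkeinC}; I would verify this explicitly for $r=1$ and then induct.

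Finally I would combine the two computations. Because the twists and the necklaces are inserted at disjoint parts of the axis and the recursion formulae are strictly local, the multiplicative effects on $d(s,t)$ commute and compose: the twist contributes the factor $(-s)^k$, each necklace contributes $(t-1+t^{-1})$ and raises the power of $u$ by one, yielding
\[
W_{k,r}(s,t) = u^{n+r-1}\bigl[1 + (-s)^{k}(t-1+t^{-1})^{r} d(s,t)\bigr],
\]
as claimed. The main obstacle I anticipate is the necklace computation: pinning down exactly why a single necklace contributes the factor $(t-1+t^{-1})$ and increments the $u$-power requires a careful local resolution argument and correct bookkeeping of which resolved diagrams reduce to trivial link components versus which return the original tangle. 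The twist recursion, by contrast, is a routine linear recurrence once the base evaluation $W(D_\infty) = u^n$ is in hand.
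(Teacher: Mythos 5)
Your overall strategy is the same as the paper's: resolve the on-axis crossings with the normalized skein relation (Proposition \ref{prop:SymmetricSkein}), evaluate the horizontal resolutions via the trivial-link hypothesis $W(D_\infty)=u^n$, and handle necklaces by a local resolution producing the factor $t-1+t^{-1}$. Your $r=0$ computation is correct: the recursion $P_k=-s\,P_{k-1}-s^{\onehalf}u^{n}$ is exactly the paper's, and your fixed-point solution (note $P^{*}=u^{n-1}$, since $u=-s^{-\onehalf}(1+s)$) reproduces what the paper gets by direct induction.

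The one place where your plan is thinner than it looks is the final ``commute and compose'' step. Knowing $W_{k,0}$ and $W_{0,r}$ separately does not formally yield $W_{k,r}$; locality of the skein relations is not by itself an argument, because each recursion's inhomogeneous term depends on what else sits on the axis. Concretely, when you resolve one of the $k$ crossings of $D_{k,r}$ horizontally, the remaining $k-1$ crossings cancel by S1-moves, and the $r$ necklaces --- now encircling a cap --- unlink by moves respecting the axis and become $r$ free circles on the axis, each contributing a factor $u$ by the circle evaluation \eqref{eq:SkeinC}. So the twist recursion in the presence of necklaces reads $W_{k,r}=-s^{\onehalf}\,u^{n+r}-s\,W_{k-1,r}$, with inhomogeneous term $u^{n+r}$ rather than $u^{n}$. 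This is precisely how the paper organizes the proof: it first establishes the necklace case $W_{0,r}=u^{n+r-1}\bigl[1+(t-1+t^{-1})^{r}d\bigr]$, via the identity $W_{0,1}=u\,(t-1+t^{-1})\,W_D-(t-2+t^{-1})\,W_\infty$, and then runs the induction on $k$ uniformly in $r$ with the $u^{n+r}$ term. Alternatively, in your order, you could apply the necklace step to the base diagram $D_{k,0}$, but then you must check that the horizontal resolution of $D_{k,0}$ at the necklace site is still the trivial $(n+1)$-component link (it is, again because the $k$ crossings cancel by S1-moves after the cut). Either verification is routine, but one of them must actually be carried out; with it, your argument is complete and essentially coincides with the paper's.
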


\begin{proof}
  \textit{Insertion of necklaces:}
  For arbitrary link diagrams $D = D_{0,0}$, $D_{0,1}$ and $D_\infty$ 
  related as in \fref{fig:necklace_tangles} by insertion of one necklace,
  the $W$-polynomials satisfy the relationship
  \[
  W_{0,1} = (-s^{\onehalf}-s^{-\onehalf}) \cdot (t-1+t^{-1}) \cdot W_D
  \; - \; (t-2+t^{-1}) \cdot W_\infty .
  \]
  If $D_\infty$ is the trivial $(n+1)$-component link, 
  then for $r=1$ we obtain
  \begin{align*}
    W_{0,1}(s,t) & = u^n \cdot (t-1+t^{-1}) \cdot \bigl( 1 + d(s,t) \bigr) 
    - (t-2+t^{-1}) \cdot u^n  \\
    & = u^{n+r-1} \bigl( 1 + (t-1+t^{-1}) \cdot d(s,t) \bigr).
  \end{align*}
  The general case for $r$ necklaces follows by induction.
  
  \textit{Insertion of crossings:}
  We first assume that $k\ge0$ and use induction. 
  For $k=0$ the assertion is valid for all $r\ge0$ and $n\ge1$. 
  For the induction step we assume that the assertion 
  holds for $k-1$ for all $r\ge0$ and $n\ge1$. 
  Then, by Proposition \ref{prop:SymmetricSkein} we have 
  \begin{align*}
    W_{k,r}(s,t) & = -s^{\onehalf} \, u^{n+r} - s \, W_{k-1,r}(s,t) \\
    & = -s^{\onehalf} \, u^{n+r} - s \, u^{n+r-1}
    \bigl( 1 + (-s)^{k-1} (t-1+t^{-1})^r d(s,t) \bigr) \\  
    & = u^{n+r-1} \bigl[ 1 + (-s)^{k} (t-1+t^{-1})^r d(s,t) \bigr].
  \end{align*}
  For $k\ge0$ this completes the proof by induction.
  For $k\le0$ the calculation is analogous.
\end{proof}

As an illustration we calculate the $W$-polynomials 
of two families of symmetric union diagrams. 
They will also be used for the two-bridge knot 
examples in \sref{sub:TwoBridgeRibbonKnots} below.

\begin{example} \label{exm:knot-3_1-4_1}
  The diagrams $D_r$ and $D'_r$ depicted in \fref{fig:necklaces_diagrams} 
  represent the symmetric unions $3_1\sharp 3_1^*$ and $4_1\sharp 4_1$, 
  respectively, with $r$ necklaces.  Their $W$-polynomials are:
  \begin{align*}
    W_{D_r} (s,t) &= u^r \;[ 1 - (t-1+t^{-1})^r \cdot e(t) ] , \\
    W_{D'_r}(s,t) &= u^r \;[ 1 + (t-1+t^{-1})^r \cdot f(t) ] .
  \end{align*}
  This follows from Lemma \ref{lem:Twists}
  and $W_{D_0} (s,t) = 1-e(t)$ and $W_{D'_0}(s,t) = 1+f(t)$.
\end{example}
    
\begin{figure}[hbtp]
  \centering
  \hfill
  \subfigure[$D_r$]{\includegraphics[width=2.5cm]{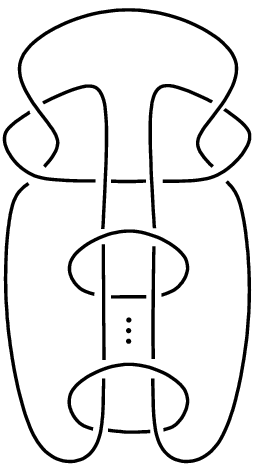}}
  \hfill
  \subfigure[$D'_r$]{\includegraphics[width=2.5cm]{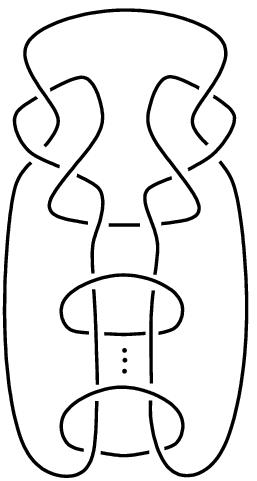}}
  \hfill{}
  \caption{Insertion of $r$ necklaces in diagrams of $3_1\sharp 3_1^*$ and $4_1\sharp 4_1$}
  \label{fig:necklaces_diagrams}
\end{figure}

\subsection{Ribbon knots with at most $10$ crossings} \label{sub:SmallRibbonKnots}

We first study the $6_1$-type family and the Kinoshita-Terasaka family
of symmetric union knot diagrams, and then turn to 
the remaining ribbon knots with at most 10 crossings.

\begin{example} \label{exm:knot-6_1}
  The family of symmetric union diagrams $D_k$ depicted 
  in \fref{fig:SymmetricFamilies}a represents the knots
  $3_1 \sharp 3_1^*$, $6_1$, $8_{20}$, $9_{46}$, $10_{140}$, \dots
  with partial knots $K_+ = 3_1$ and $K_- = 3_1^*$.
  We have $W_0(s,t) = 1 + \bigl( V_{K_+}(t) V_{K_-}(t) - 1 \bigr)$,
  and thus by Lemma \ref{lem:Twists} the $W$-polynomial of $D_k$ is 
  \begin{equation}
    \label{eq:SimplePolynomial}
    W_k(s,t) = 1 + (-s)^{k} \cdot \bigl( V_{K_+}(t) \cdot V_{K_-}(t) - 1 \bigr)
  \end{equation}
  where $V_{K_+}(t)=t^{-1}+t^{-3}-t^{-4}$ and $V_{K_-}(t)=t+t^3-t^4$. 
\end{example}

% We remark that Lemma \ref{lem:Twists} shows that such knots 
% are chiral for all $k\ne0$, provided that $V_{K_\pm} \ne 1$.

\begin{figure}[hbtp]
  \centering
  \hfill
  \subfigure[The $6_1$-type family]{\figbox{25ex}{25ex}{\includegraphics[width=18ex]{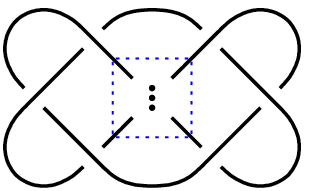}}}
  \hfill
  \subfigure[The Kinoshita-Terasaka family]{\figbox{25ex}{25ex}{\includegraphics[width=18ex]{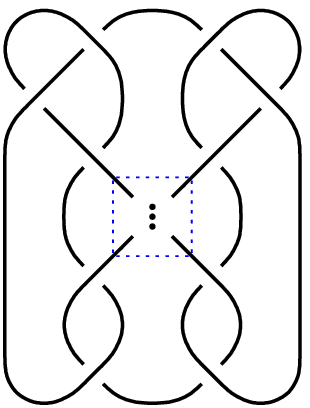}}}
  \hfill{}
  \caption{Two families of symmetric union diagrams}
  \label{fig:SymmetricFamilies}
\end{figure} 

\begin{example} \label{exm:KinoshitaTerasaka}
  The family of symmetric union diagrams $D_k$ depicted 
  in \fref{fig:SymmetricFamilies}b has trivial partial knots;
  $D_0$ represents the trivial knot, $D_1$ represents $10_{153}$,
  and $D_2$ represents the Kinoshita-Terasaka knot. 
  For this family of diagrams Lemma \ref{lem:Twists} 
  is not applicable because $D_\infty$ is non-trivial.
  A small calculation shows that % the $W$-polynomial is 
  $W_k(s,t) = 1 + \bigl( (-s)^{k} - 1\bigr) \cdot f(t)$.
\end{example}

\begin{figure}[htbp]
  \centering
  \hfill
  \subfigure[$8_9$: an asymmetrically amphichiral diagram]%
  {\figbox{27ex}{25ex}{\includegraphics[width=17ex]{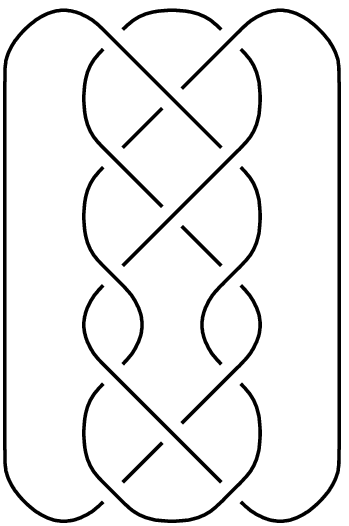}}}
  \hfill
  \subfigure[$8_9$: a symmetrically amphichiral diagram]%
  {\figbox{27ex}{25ex}{\includegraphics[width=17ex]{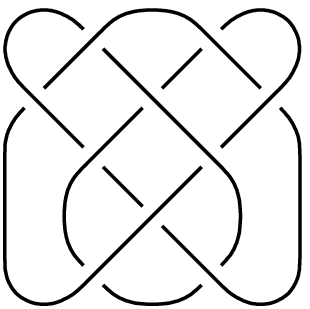}}}
  \hfill{}
  \caption{Two symmetric union diagrams for $8_9$}
  \label{fig:knot-8_9}
\end{figure}

\begin{example} \label{exm:knot-8_9}
  Figure \ref{fig:knot-8_9} displays two symmetric union diagrams for the ribbon knot $8_9$.  
  This knot is amphichiral, and so both diagrams 
  are Reidemeister equivalent to their mirror images.
  But the first diagram (\fref{fig:knot-8_9}a) cannot be 
  symmetrically amphichiral because its $W$-polynomial is not
  symmetric in $s$:
  \[
  W_1(s,t) = 1 + s \, g_2(t) - s^{2} \, f(t).
  \]
  % This means that the first diagram of the knot $8_9$ 
  % is not symmetrically equivalent to its mirror image.
  For the second diagram (\fref{fig:knot-8_9}b) we find 
  $
  W_2(s,t) = 1 + f(t),
  $ 
  so that the previous obstruction disappears.
  This diagram is indeed symmetrically amphichiral,
  as shown in \fref{fig:SymmetricAmphichiral}:
  \begin{enumerate}
  \item We start out with a diagram isotopic to \fref{fig:knot-8_9}b,
  \item we slide the upper twist inside-out,
  \item we perform a half-turn of each of the partial knots along its vertical axis, 
  \item we slide the lower twist outside-in,
  \item we turn the entire diagram upside-down.
  \end{enumerate}
  Each of these steps is easily seen to be composed of symmetric Reidemeister moves;
  the last step is realized by a half-turn around the horizontal axis 
  (realizable by symmetric Reidemeister moves) followed by 
  a half-turn around the vertical axis (flype).
\end{example}

\begin{figure}[htbp]
  \centering
  \includegraphics[width=\linewidth]{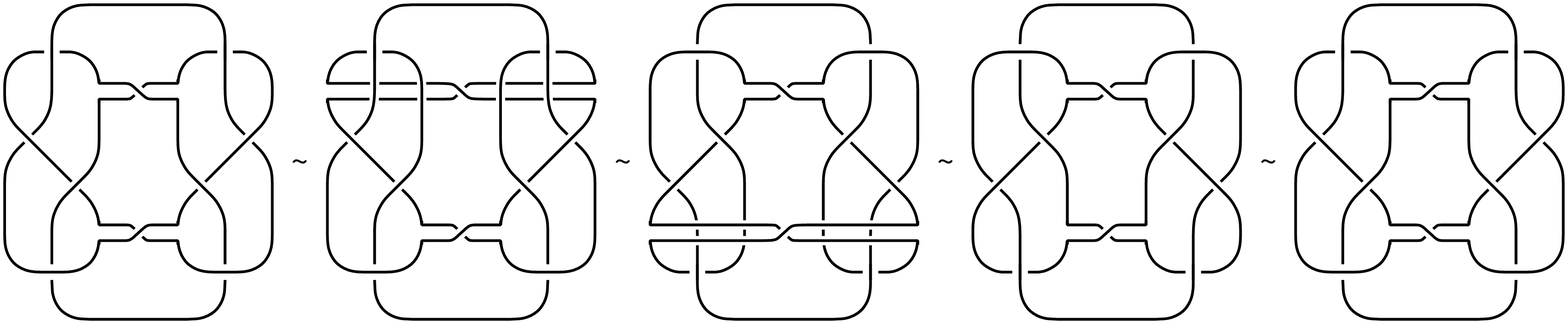}
  \caption{Symmetric equivalence between mirror images}
  \label{fig:SymmetricAmphichiral}
\end{figure}

Table \ref{tab:SmallRibbonKnots} completes 
our list of ribbon knots with at most $10$ crossings. 
In order to save space we have used the auxiliary polynomials
listed in Table \ref{tab:AuxiliaryPolynomials}, which appear repeatedly.

\begin{table}[htbp]
  \renewcommand{\pic}[1]{\raisebox{0pt}[0pt]{\includegraphics[height=8ex]{#1}}}
  \bigskip \noindent
  \begin{tabular}{|c|l|}
    \hline
    diagram & knot \; det \; partial knot \\
    & $W(s,t)$ \\
    \hline
    &\\ & $8_8$ \quad $25$ \quad $4_1$ \\
    & $1 - s   \cdot f(t)$ \\
    \pic{table-knot-8_8} & \\
    &\\ & $10_3$ \quad $25$ \quad $5_1$ \\
    & $1 - s   \cdot g_5(t)$ \\
    \pic{table-knot-10_3} & \\
    &\\ & $10_{22}$ \quad $49$ \quad $5_2$ \\
    & $1 - s   \cdot g_2(t)$ \\
    \pic{table-knot-10_22} & \\
    &\\ & $10_{35}$  \quad $49$ \quad $5_2$ \\
    & $1 - s   \cdot g_2(t)$ \\
    \pic{table-knot-10_35} & \\
    \hline
    &\\ & $10_{137}$ \quad $25$ \quad $4_1$ \\
    & $1 + s^2 \cdot f(t)$ \\
    \pic{table-knot-10_137} & \\
    &\\ & $10_{129}$ \quad $25$ \quad $4_1$ \\
    & $1 - s   \cdot f(t)$ \\
    \pic{table-knot-10_129} & \\
    &\\ & $10_{155}$ \quad $25$ \quad $4_1$ \\
    & $1 + s^2 \cdot f(t)$ \\
    \pic{table-knot-10_155} & \\
    \hline
  \end{tabular}
  \quad
  \begin{tabular}{|c|l|}
    \hline
    diagram & knot \; det \; partial knot \\
    & $W(s,t)$ \\
    \hline
    &\\ & $9_{41}$  \quad $49$ \quad $5_2$ \\
    & $1 - s^2 \cdot g_1(t)+s^3 \cdot f(t)$ \\
    \pic{table-knot-9_41} &  \\
    &\\ & $10_{48}$ \quad $49$ \quad $5_2$ \\
    & $1 - g_2(t)$ \\
    \pic{table-knot-10_48} & \\
    \hline
    &\\ & $10_{42}$ \quad $81$ \quad $6_1$ \\
    & $1 - s^{-1} \cdot g_1(t) + h_1(t) - s \cdot g_3(t)$ \\
    \pic{table-knot-10_42} & \\
    &\\ & $10_{75}$ \quad $81$ \quad $6_1$ \\
    & $1 + s^{-2} \cdot g_1(t) - s^{-1} \cdot h_1(t) + g_3(t)$ \\
    \pic{table-knot-10_75} & \\
    &\\ & $10_{87}$ \quad $81$ \quad $6_1$ \\
    & $1 +              g_1(t) - s      \cdot h_1(t) + s^2 \cdot g_3(t)$ \\
    \pic{table-knot-10_87} & \\
    &\\ & $10_{99}$ \quad $81$ \quad $6_1$ \\
    & $1 - s^{-1} \cdot   f(t) + h_2(t) - s \cdot   f(t)$ \\
    \pic{table-knot-10_99} & \\
    \hline
    &\\ & $10_{123}$ \quad $121$ \quad $6_2$ \\
    & $1 + s^{-1} \cdot g_4(t) + h_3(t) + s \cdot g_4(t)$ \\
    \pic{table-knot-10_123} & \\
    \hline
  \end{tabular}
  \bigskip
  \caption{$W$-polynomials of ribbon knots with at most $10$ crossings}
  \label{tab:SmallRibbonKnots}
\end{table}
\begin{table}[htbp]
  \begin{tabular}{ll}
    &\\
    $e(t)$   &= $t^{-3}\,(t^2+1)\;\,(t-1)^2\,(t^2+t+1)$\\
    $f(t)$   &= $t^{-4}\,(t^2+1)\;\,(t-1)^2\,(t^2+t+1)\;\,(t^2-t+1)$\\
    $g_1(t)$ &= $t^{-5}\,(t^2+1)\;\,(t-1)^2\,(t^2+t+1)\;\,(t^2-t+1)^2$\\
    $g_2(t)$ &= $t^{-5}\,(t^2+1)^2  (t-1)^2\,(t^2+t+1)\;\,(t^2-t+1)$\\
    $g_3(t)$ &= $t^{-5}\,(t^2+1)^2  (t-1)^4\,(t^2+t+1)$\\
    $g_4(t)$ &= $t^{-5}\,(t^2+1)\;\,(t-1)^4\,(t^2+t+1)\;\,(t^2-t+1)$\\
    $g_5(t)$ &= $t^{-5}\,(t^2+1)\;\,(t-1)^2\,(t^2+t+1)^2(t^2-t+1)$\\
    $h_1(t)$ &= $t^{-6}\,(t^2+1)\;\,(t-1)^2\,(t^2+t+1)\;\,(t^2-t+1)^3$\\
    $h_2(t)$ &= $t^{-6}\,(t^2+1)\;\,(t-1)^4\,(t^2+t+1)\;\,\phantom{(t^2-t+1)}\,(t^4-t^3+3t^2-t+1)$\\
    $h_3(t)$ &= $t^{-6}\,(t^2+1)\;\,(t-1)^2\,(t^2+t+1)\;\,(t^2-t+1)\,(t^4-3t^3+5t^2-3t+1)$\\
    &\\
  \end{tabular}
  \caption{Auxiliary polynomials used in the description of $W$-polynomials}
  \label{tab:AuxiliaryPolynomials}
\end{table}

Diagrams for $6_1$, $8_{20}$, $9_{46}$, $10_{140}$ are discussed 
in Example \ref{exm:knot-6_1} within the $6_1$-type family,
further diagrams are discussed for $8_9$ in Example \ref{exm:knot-8_9}, 
for  $9_{27}$ in Example \ref{exm:knot-9_27},
and for $10_{153}$ in Example \ref{exm:KinoshitaTerasaka}.
We remark that the $W$-polynomial of $10_{129}$ 
is the same as that of $8_8$, and the $W$-polynomial 
of $10_{155}$ is the same as that of $10_{137}$, 
in accordance with results of Kanenobu \cite{Kanenobu:1986}
who studied an infinite family containing these knots. 
Lemma \ref{lem:Twists} was used for the diagrams 
of $6_1$, $8_{20}$, $9_{46}$, $10_{140}$ in Example \ref{exm:knot-6_1} 
and again for $8_8$, $10_3$, $10_{22}$, $10_{35}$, $10_{137}$ 
in Table \ref{tab:SmallRibbonKnots}.

\subsection{Two-bridge ribbon knots} \label{sub:TwoBridgeRibbonKnots}

In this final paragraph we establish symmetric inequivalence in the family 
of two-bridge ribbon knots that we studied in \cite{EisermannLamm:2007}.
We consider the symmetric union diagrams $D_n$ and $D'_n$ shown in \fref{fig:familiy-statement}.
They are defined for $n\ge2$ and we write $n=2k+1$ in the odd case and $n=2k$ in the even case.

\begin{figure}[htbp]
  \centering
  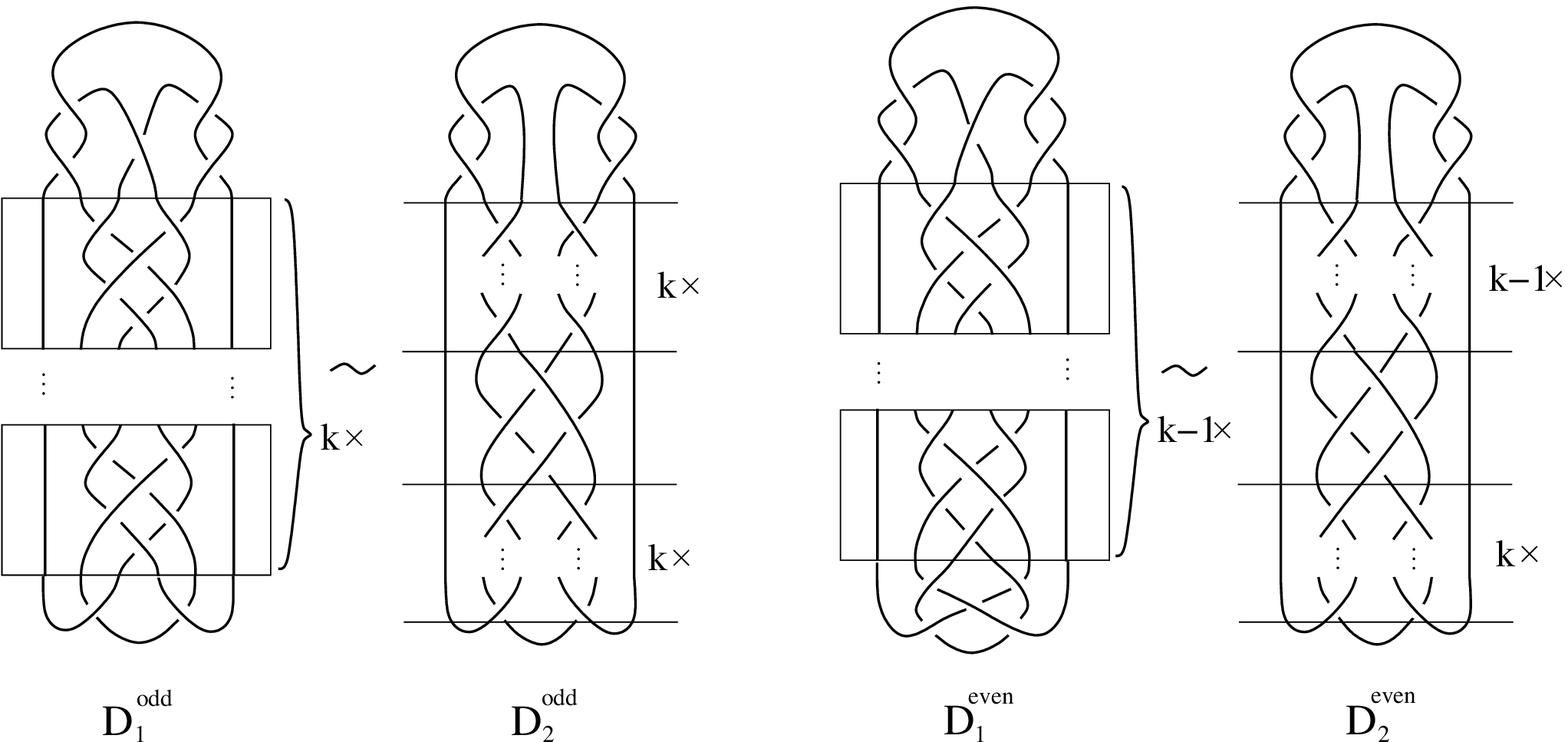
  \caption{The family of knot diagrams $D_n$ and $D'_n$ 
    of Theorem \ref{non-equivalence}}
  \label{fig:familiy-statement}
\end{figure}

\begin{remark}
  The symmetric union diagrams $D_n$ and $D'_n$ represent two-bridge knots 
  of the form $K(a,b)=C(2a,2,2b,-2,-2a,2b)$ with $b=\pm1$.  
  These knots have genus $3$ and their crossing number is $6+n$.
  The first members can be identified as follows:
  % More explicitly, we so obtain the knots 
  $8_9    = K(-1,-1)$ for $n=2$, 
  $9_{27} = K(-1,1)$  for $n=3$, 
  $10_{42}= K(1,1)$   for $n=4$, 
  $11a96  = K(1,-1)$  for $n=5$,
  $12a715 = K(-2,-1)$ for $n=6$, 
  $13a2836= K(-2,1)$  for $n=7$. 

  The diagrams $D_2$ and $D'_2$ are the two mirror-symmetric 
  diagrams of $8_9$ shown in \fref{fig:knot-8_9}b.
  They have been shown to be symmetrically equivalent
  in \fref{fig:SymmetricAmphichiral}.
  
  The diagrams $D_3$ and $D'_3$ are the two symmetric union 
  representations of $9_{27}$ depicted in \fref{fig:Knot-9_27}.
  They have already been proven to be distinct in Example \ref{exm:knot-9_27}.

  We do not know if the diagrams $D_4$ and $D'_4$,
  representing $10_{42}$, are symmetrically equivalent:
  their $W$-polynomials co\"incide but no 
  symmetric transformation has yet been found.
\end{remark}

We have proved in \cite{EisermannLamm:2007}, Theorem 3.2, 
that for each $n$ the symmetric union diagrams $D_n$ 
and $D'_n$ are asymmetrically equivalent.
One of the motivations for developing the $W$-polynomial was to show 
that $D_n$ and $D'_n$ are, in general, not symmetrically equivalent:

\begin{theorem} \label{non-equivalence}
  The symmetric union diagrams $D_n$ and $D'_n$ 
  depicted in \fref{fig:familiy-statement}
  are not symmetrically equivalent if $n=3$ or $n\ge5$.
\end{theorem}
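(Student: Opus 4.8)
The plan is to use the invariance of the $W$-polynomial under symmetric Reidemeister moves, established in the refined Jones polynomial theorem above: two diagrams with distinct $W$-polynomials cannot be symmetrically equivalent. It therefore suffices to compute $W_{D_n}(s,t)$ and $W_{D'_n}(s,t)$ and to show that they differ whenever $n=3$ or $n\ge5$. The case $n=3$ is already settled in Example \ref{exm:knot-9_27}, so the real content lies in a uniform treatment of the two families for large $n$.

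First I would read off from \fref{fig:familiy-statement} how $D_n$ and $D'_n$ are assembled: each arises from a fixed base tangle by inserting on the axis a twist region whose length grows linearly with the index $k$ (where $n=2k$ or $n=2k+1$), together with a controlled number of necklaces. This is exactly the situation governed by Lemma \ref{lem:Twists}. Applying that lemma, with Proposition \ref{prop:SymmetricSkein} supplying the base cases, I would obtain closed forms of the shape $W_{D_n}=1+(-s)^{p(k)}P(t)+\cdots$ and $W_{D'_n}=1+(-s)^{q(k)}Q(t)+\cdots$, where the exponents $p,q$ are affine in $k$ and the $t$-coefficients are assembled from the auxiliary polynomials of Table \ref{tab:AuxiliaryPolynomials}. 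The odd and even cases are treated separately, matching the splitting of \fref{fig:familiy-statement} into $D_{\textrm{odd}},D'_{\textrm{odd}}$ and $D_{\textrm{even}},D'_{\textrm{even}}$.

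With the formulas in hand, the decisive step is to form the difference $W_{D_n}-W_{D'_n}$ and to decide when it vanishes. In the \emph{odd} case I expect $D_n$ and $D'_n$ to carry genuinely different numbers of positive and negative crossings on the axis, so that by Proposition \ref{prop:Span:s} their ranges of $s$-degrees already differ; the extreme $s$-coefficient is then a nonzero multiple of one of the auxiliary polynomials, and the difference cannot vanish for any $k\ge1$, that is, for all $n\ge3$. The \emph{even} case is more delicate: here the $W$-polynomials must coincide for $n=2$ (the knot $8_9$) and $n=4$ (the knot $10_{42}$), and only begin to differ at $n=6$.

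The hard part will be this transition in the even family. Here the $s$-degree range is the same for both diagrams, so the difference is not detected by span alone; instead it is concentrated in a single $s$-degree, where it equals a fixed power of $(-s)$ times a difference of two $t$-polynomials produced by the necklace factors $(t-1+t^{-1})$ of Lemma \ref{lem:Twists}. One must then show that this $t$-polynomial is nonzero for every $k\ge3$ while vanishing for $k=1,2$. I would confirm the vanishing at $k=1,2$ by direct substitution, and prove non-vanishing for $k\ge3$ by isolating a single monomial --- say the extreme power of $t$ --- whose coefficient is a nonzero product of the irreducible factors $(t^2+1)$, $(t^2+t+1)$, $(t^2-t+1)$, $(t-1)^2$ recorded in Table \ref{tab:AuxiliaryPolynomials}, and hence never zero. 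Combining the odd case ($n\ge3$) with the even case ($n\ge6$) gives non-equivalence precisely for $n=3$ and $n\ge5$, as claimed.
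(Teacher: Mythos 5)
Your overall strategy --- invariance of $W$ under symmetric Reidemeister moves, closed forms via Lemma \ref{lem:Twists} and Proposition \ref{prop:SymmetricSkein}, then comparison of extreme $s$-degrees --- is exactly the paper's, and your odd case is essentially their argument, with one caveat: Proposition \ref{prop:Span:s} only \emph{bounds} the $s$-degree range, so ``their ranges of $s$-degrees already differ'' is not automatic from the crossing counts. You must prove that the top degree $s^{k+1}$ is actually attained. The paper does this by observing that among all resolutions of the $k$ negative axis crossings, only the all-horizontal one can contribute to $s^{k+1}$; the resulting necklace diagram is evaluated by Lemma \ref{lem:Twists}, and the non-vanishing of its coefficient $a_k(t)$ comes from Example \ref{exm:knot-3_1-4_1}. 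You gesture at this (``nonzero multiple of one of the auxiliary polynomials''), so I read your odd case as a compressed version of the same step; note it covers $n=3$ directly, so the appeal to Example \ref{exm:knot-9_27} is not needed.

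The genuine gap is your even case. You assume the $s$-degree ranges of $W_{D_n}$ and $W_{D'_n}$ coincide for even $n$, so that the difference must be hunted down at a single fixed $s$-degree via a $t$-polynomial that vanishes for $k=1,2$ and not for $k\ge3$. That is not how the even family behaves, and your method of ``isolating the extreme monomial whose coefficient is a nonzero product of irreducible factors'' would fail at its first step: for $n=2k$ the diagram $D_n$ has $k$ positive and $k$ negative axis crossings, so Proposition \ref{prop:Span:s} allows degree $k$, but the coefficient of $s^{k}$ is in fact \emph{zero}. The paper's key trick, absent from your plan, is to show that $W(D_n)=W(D_n^\star)$, where $D_n^\star$ is obtained from $D_n$ by deleting the first and last crossings on the axis (a cancellation that uses the fact that the horizontal resolutions of those two crossings are trivial links). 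This drops the degree bound to $k-1$, and then the same attainment argument as in the odd case (resolution, Lemma \ref{lem:Twists}, Example \ref{exm:knot-3_1-4_1}) gives $\max\deg_s W(D_n)=k-1$ exactly. So the even case is still a span argument: $k-1>1$ precisely when $n\ge6$, and the reduction also explains the coincidence at $n=4$ structurally, since $D_4^\star$ is the diagram $D'_4$ itself, rather than through an accidental vanishing of a coefficient difference. Without the $D_n^\star$ reduction you have no mechanism to locate the true top $s$-degree, and the even half of the theorem does not go through.
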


\begin{proof}
  We show that the $W$-polynomials of the two diagrams 
  $D_n$ and $D'_n$ are different for $n=3$ and $n\ge5$.
  By Proposition \ref{prop:Span:s} the degree in $s$ of 
  the $W$-polynomial of $D'_n$ ranges at most from $-1$ to $1$.
  It is enough to show that the maximal or minimal degree in $s$ 
  of the $W$-polynomial of $D_n$ is bigger than 1, or smaller 
  than $-1$, respectively. For brevity, we only analyze the maximal degree.
  
  \medskip\noindent\textit{Odd case:}
  For $n=2k+1$ we claim that $\max\deg_s W(D_n) = k+1$.
  \smallskip

  The diagram $D_n$ contains $k$ negative and $k+1$ positive crossings on the axis, 
  therefore the maximal degree in $s$ is less or equal to $k+1$.
  We resolve all $k$ negative crossings $\aucr$ on the axis to $\ahcr$.
  Only this resolution contributes by Proposition \ref{prop:SymmetricSkein} 
  to the maximal degree $s^{k+1}$ and we obtain a factor of $(-s^{-\onehalf})^k$.
  The resulting diagram is illustrated in \fref{fig:basis_diagrams}a: 
  it has $k$ necklaces and $k+1$ consecutive positive crossings on the axis, 
  for which the horizontal resolution is a trivial link with $k+2$ components. 
  Let $u^k (a_k(t)+1)$ be the $W$-polynomial of the latter diagram 
  without the crossings on the axis, then by Lemma \ref{lem:Twists} 
  the $W$-polynomial of the diagram with $k+1$ crossings is   
  $(-s^{-\onehalf})^k u^k \bigl((-s)^{k+1}a_k(t)+1\bigr)$, 
  including the factor $(-s^{-\onehalf})^k$ from the resolution step.
  By Example \ref{exm:knot-3_1-4_1} we find that $a_k(t)\ne 0$, 
  proving that in the odd case the maximal $s$-degree of $D_n$ is $k+1$. 
  Note that the maximal $s$-degree of $(-s^{-\onehalf})^k u^k$ is zero.
  For odd $n\ge3$ the maximal $s$-degree is therefore greater than $1$.
  
  \begin{figure}[hbtp]
  \centering
  \hfill
  \subfigure[Odd case]{\includegraphics[width=4cm]{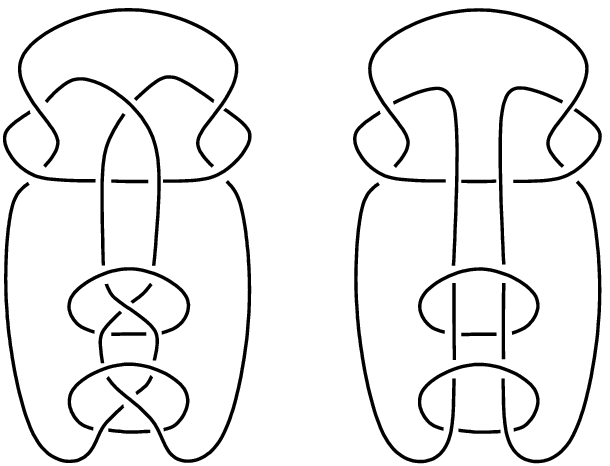}}
  \hfill
  \subfigure[Even case]{\includegraphics[width=4cm]{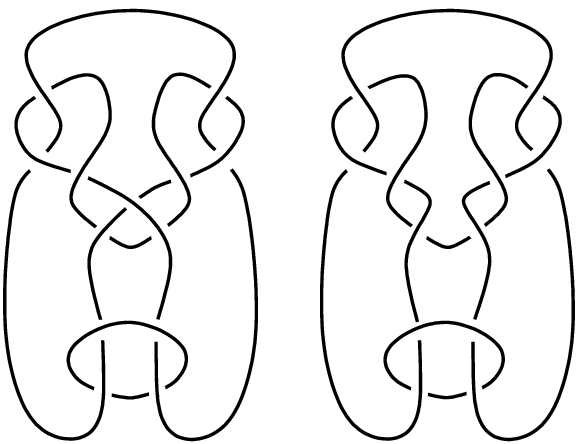}}
  \hfill{}
  \caption{Diagrams occuring in the proof of Theorem \ref{non-equivalence} (for $k=2$)}
  \label{fig:basis_diagrams}
  \end{figure} 

  \medskip\noindent\textit{Even case:}
  For $n=2k$ we claim that $\max\deg_s W(D_n) = k-1$.
  \smallskip

  We observe that the diagram $D^\star_n$ obtained from $D_n$ by deleting 
  the first and the last crossing on the axis has the same $W$-polynomial as $D_n$.  
  This requires a short calculation using the fact that the $\ahcr$-resolutions 
  for these crossings are diagrams of the trivial link. 

  As an illustration, let us make the first three cases explicit.
  For $n=2$ the diagram $D^\star_2$ is $4_1 \sharp 4_1$. 
  For $n=4$ the diagram $D^\star_4$ co\"incides with $D'_4$, showing 
  that $D_4$ and $D'_4$ cannot be distinguished by their $W$-polynomials. 
  For $n=6$ the two diagrams $D_6$ and $D_6^\star$ are illustrated in \fref{fig:knot-12a715}; 
  they represent the knots $12a715$ and $12a3$, respectively. 
  \footnote{We seize the occasion to correct an unfortunate misprint 
    in \cite{EisermannLamm:2007}: the caption of Fig.\ 7 showing 
    a similar diagram states wrong partial knots.  The partial knots 
    of the shown diagrams of $12a3$ are $C(3,4)$ and $C(2,6)$.}
  
  \begin{figure}[htbp]
  \centering
  \includegraphics[scale=0.6]{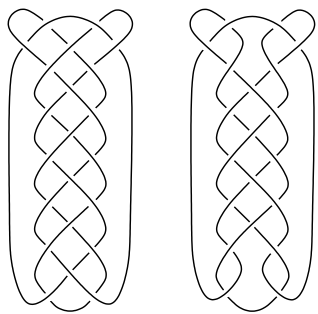}
  \caption{Two symmetric union diagrams sharing the same $W$-polynomial}
  \label{fig:knot-12a715}
  \end{figure}

  By Proposition \ref{prop:Span:s} the exponents 
  of $s$ in $W(D_n)$ lie between $-k$ and $+k$.
  In the diagram $D^\star_n$, however, only $k-1$ negative 
  and $k-1$ positive crossings on the axis remain, 
  so in $W(D_n) = W(D_n^\star)$ the bounds $-k$ and $+k$ are not attained,
  whence $\max\deg_s W(D_n) \le k-1$. 
  
  In $D_n^\star$ we resolve all $k-1$ negative crossings $\aucr$ on the axis to $\ahcr$.
  As in the previous case, only this resolution contributes to 
  the maximal degree $s^{k-1}$ and we obtain a factor of $(-s^{-\onehalf})^{k-1}$.
  The resulting diagram is illustrated in \fref{fig:basis_diagrams}b: 
  it has $k-1$ necklaces and $k-1$ consecutive positive crossings on the axis, for which the 
  horizontal resolution is a trivial link with $k+1$ components.  
  The process of adding necklaces and twists is the same as in the odd case:
  for the $W$-polynomial of the diagram with $k-1$ crossings we have 
  $(-s^{-\onehalf})^{k-1} u^{k-1} \bigl((-s)^{k-1}b_k(t)+1\bigr)$ 
  if the $W$-polynomial of the respective diagram without 
  the twists is $(-s^{-\onehalf})^{k-1} u^{k-1} (b_k(t)+1)$,
  both already including the factor $(-s^{-\onehalf})^{k-1}$.
  Using again Example \ref{exm:knot-3_1-4_1} we find that $b_k(t)\ne 0$. 
  This proves that in the even case the maximal $s$-degree of $W(D_n) = W(D_n^\star)$ is $k-1$. 
  Hence, for even $n\ge6$ the maximal $s$-degree is greater than $1$, which proves the theorem.
\end{proof}

%%%%%%%%%%%%%%%%%%%%%%%%%%%%%%%%%%%%%%%%%%%%%%%%%%%%%%%%%%%%%%%%%%%%%%%%%%%%%

\bibliographystyle{plain}
\bibliography{symjones}

%%%%%%%%%%%%%%%%%%%%%%%%%%%%%%%%%%%%%%%%%%%%%%%%%%%%%%%%%%%%%%%%%%%%%%%%%%%%%
\end{document}